\newtheorem{theorem}{Theorem}[section]
\newtheorem{lemma}{Lemma}[section]
\newtheorem{proposition}{Proposition}[section]
\newtheorem{assumption}{Assumption}[section]
\theoremstyle{definition}
\newtheorem{remark}{Remark}[section]
\newtheorem{example}{Example}[section]
\newcommand{\R}{\mathbb{R}}
\newcommand{\mF}{\mathcal{F}}
\newcommand{\mG}{\mathcal{G}}
\newcommand{\mS}{\mathcal{S}}
\newcommand{\Ep}{\mathrm{E}}
\renewcommand{\Pr}{\mathrm{P}}
\renewcommand{\tilde}{\widetilde}
\renewcommand{\hat}{\widehat}
\DeclareMathOperator{\Var}{Var}
\DeclareMathOperator{\Cov}{Cov}
\DeclareMathOperator{\E}{E}
\begin{document}

\title[]{Bootstrap confidence bands for spectral estimation of L\'{e}vy densities under high-frequency observations}
\thanks{K. Kato is supported by Grant-in-Aid for Scientific Research (C) (15K03392) from the JSPS. D. Kurisu is supported by Grant-in-Aid for JSPS Research Fellow (16J06454) from the JSPS. We would like to thank Yuya Sasaki for kindly sharing the codes.}
\author[K. Kato]{Kengo Kato}
\author[D. Kurisu]{Daisuke Kurisu}

\date{First version: May 1, 2017. This version: \today}

\address[K. Kato]{
Graduate School of Economics, University of Tokyo\\
7-3-1 Hongo, Bunkyo-ku, Tokyo 113-0033, Japan.
}
\email{kkato@e.u-tokyo.ac.jp}

\address[D. Kurisu]{
Graduate School of Economics, University of Tokyo\\
7-3-1 Hongo, Bunkyo-ku, Tokyo 113-0033, Japan.
}
\email{dkurisu.mathstat@gmail.com}

\begin{abstract}
This paper develops bootstrap methods to construct uniform confidence bands for nonparametric spectral estimation of L\'{e}vy densities under high-frequency observations.
We assume that we observe $n$ discrete observations at frequency $1/\Delta  > 0$, and work with the high-frequency setup where $\Delta = \Delta_{n} \to 0$ and $n\Delta \to \infty$ as $n \to \infty$. 
We employ a spectral (or Fourier-based) estimator of the L\'{e}vy density, and develop novel implementations of  Gaussian multiplier (or wild) and empirical (or Efron's) bootstraps to construct confidence bands for the spectral estimator on a compact set that does not intersect the origin. 
We provide conditions under which the proposed confidence bands are asymptotically valid. 
Our confidence bands are shown to be asymptotically valid for a wide class  of L\'{e}vy processes.
We also develop a practical method for bandwidth selection, and conduct simulation studies to investigate the finite sample performance of the proposed confidence bands.
\end{abstract}

\keywords{empirical bootstrap, high-frequency data, L\'{e}vy process, multiplier bootstrap, spectral estimation}

\maketitle

\section{Introduction}

In the financial economics literature, it has been argued that the presence of jumps plays an important role in the dynamics of financial  data such as asset returns, interest rates,  currencies, and so on \citep[cf.][]{CoTa04,Ai04,Jo04,AiJa14}. For example, \cite{Jo04} studies the dynamics of interest rate movements and  argues that the presence of jumps contributes to capturing non-normalities of increment distributions that are consistent with empirical data but diffusion models can not capture. 
A \textit{L\'{e}vy process} is a fundamental class of continuous-time stochastic processes allowing for jumps; we refer to \cite{Sa99} and \cite{Be96} as standard references on L\'{e}vy processes. From the L\'{e}vy-It\^{o} decomposition \citep[][Theorem 19.2]{Sa99}, a L\'{e}vy process is decomposed into the sum of drift, Brownian, and jump components, and the distribution of the L\'{e}vy process is completely determined by the three parameters, namely, the drift, the diffusion coefficient, and the \textit{L\'{e}vy measure}. The L\'{e}vy measure  controls the jump dynamics of the L\'{e}vy process, and therefore, inference on the L\'{e}vy measure is of particular interest. 
In this paper, we assume that the L\'{e}vy density has a Lebesgue density (L\'{e}vy density), and study inference on the L\'{e}vy density from \textit{high-frequency} observations. High-frequency data -- data collected for every minute, second, or even microsecond -- have become available due to the advancement of information technologies, and the analysis of high-frequency financial data has attracted a great deal of attentions in the financial econometrics  literature; see, e.g., \cite{AiJa14}.


To be precise, we work with the following setting. 
Let $L=(L_{t})_{t \geq 0}$ be a L\'evy process, i.e., $L$ is a stochastic process starting at $0$ with stationary independent increments and c\`{a}dl\`{a}g sample paths.
From the L\'{e}vy-Khinchin representation \citep[][Theorem 8.1]{Sa99}, $L_{t}$ has characteristic function $\varphi_{t}(u) = \Ep[e^{iuL_{t}}] = e^{t\psi (u)}, u \in \R$, where $i=\sqrt{-1}$, and 
\[
\psi (u) =- \frac{u^{2}\sigma^{2}}{2} + iu\gamma+ \int_{\R} (e^{iux} - 1- iux1_{[-1,1]} (x) ) \nu(dx).
\]
The triplet $(\sigma^{2}, \gamma, \nu)$, called the L\'{e}vy triplet, completely characterizes the distribution of the L\'{e}vy process $L$ \citep[cf.][Theorem 7.10]{Sa99}. Specifically,  $\sigma^{2} \geq 0$ is the diffusion coefficient, $\gamma \in \R$ is the drift,  and $\nu$ is the L\'{e}vy measure, i.e., a Borel measure on $\R$ such that 
\[
\int_{\R} (1 \wedge x^{2}) \nu(dx) < \infty \quad \text{and} \quad \nu (\{ 0 \})=0.
\]
For any (Borel) set $A \subset \R$, $\nu (A)$ coincides with the expected number of jumps falling in $A \setminus \{ 0 \}$ in the unit time: 
\[
\nu (A) = \Ep \left [ \sum_{0 < t \leq 1} 1(L_{t} - L_{t-} \in A \setminus \{ 0 \}) \right ],
\]
where $L_{t-} = \lim_{s \uparrow t} L_{s}$ (recall that $L$ has at most countably many jumps on $(0,t]$ for any $t > 0$).
In this paper, we assume that the L\'{e}vy measure has Lebesgue density $\rho$, called the L\'{e}vy density, i.e., $\nu (dx) = \rho (x) dx$. 
Furthermore, we assume that we observe discrete observations $L_{j\Delta}, \ j =1,\dots,n$
at frequency $1/\Delta > 0$, and work with the high-frequency setup where $\Delta = \Delta_{n} \to 0$ and $n\Delta \to \infty$ as $n \to \infty$. Since we are interested in estimation of the L\'{e}vy measure (or more precisely its Lebesgue density), we require $n\Delta \to \infty$. Heuristically, this can be understood from the observation that, within any fixed time interval, say the unit time, the L\'{e}vy process $(L_{t})_{t \in [0,1]}$ has only finitely many jumps that fall in a local neighborhood not containing the origin, so that even if the whole path $(L_{t})_{t \in [0,1]}$ could be observed, there are only finitely many data that can be used to estimate the L\'{e}vy measure at the local neighborhood. Concretely, we have in mind that the unit time is one day, and if we have 6.5 trading hours in a day and take 5 minutes as a time span, then $\Delta = 5/(6.5 \times 60) = 1/78 \approx 0.013$; each year has around 252 business days, and so we have $78 \times 252 = 19656$ observations per a year.

Under this setup, the goal of this paper is to develop bootstrap methods to construct confidence bands for the L\'{e}vy density. Since the L\'{e}vy density can blow up around the origin, we focus on confidence bands on a compact set that does not intersect the origin. We employ a spectral (or Fourier-based) estimator of the L\'{e}vy density, and develop novel implementations of Gaussian multiplier and empirical bootstraps to construct confidence bands for the spectral estimator. We provide conditions under which the proposed confidence bands are asymptotically valid. 
Notably, our confidence bands are shown to be asymptotically valid for a wide class of L\'{e}vy processes, including compound Poisson processes, (Variance-) Gamma processes, Inverse Gaussian processes, tempered stable processes, and Normal Inverse Gaussian processes with or without Brownian components.\footnote{For tempered stable processes, however, the stability index has is at most $1$ for a technical reason.}  
Confidence bands provide a simple graphical description of the accuracy of a nonparametric curve estimator, thereby quantifying uncertainties of the estimator simultaneously over (in most cases continuum of) designs points, which is of practical importance in statistical analysis. 
Despite extensive studies on consistent estimation of the L\'{e}vy density, however, research on  confidence intervals or bands for the L\'{e}vy density is relatively scarce -- see a literature review below. In particular, to the best of our knowledge, this is the first paper to derive bootstrap-based confidence bands for the L\'{e}vy density. In addition to the theoretical results, we also develop a practical method for bandwidth selection, inspired by \cite{BiDuHoMu07}, and conduct simulation studies to investigate the finite sample performance of the proposed confidence bands.

The literature on nonparametric estimation of L\'{e}vy measures or densities is broad. Recent contributions include \cite{Sh06}, \cite{Fi09}, \cite{CoGe09,CoGe10,CoGe15}, \cite{KaRe10}, \cite{Du13}, and \cite{BeLa15} under the high-frequency setup (i.e., $\Delta = \Delta_{n} \to 0$ as $n \to \infty$), and \cite{EsGuSp07}, \cite{NeRe09}, \cite{Gu09}, \cite{ChDeHa10}, \cite{CoGe10}, \cite{KaRe10}, \cite{Be11}, \cite{Gu12}, \cite{Ka14}, \cite{Tr15}, and \cite{BeRe15} under the low-frequency setup (i.e., $\Delta > 0$ is fixed). 
\cite{Jovava05} study nonparametric estimation of the L\'{e}vy measure for a L\'{e}vy driven  Ornstein-Uhlenbeck process under high and low frequency observations.
\cite{NiRe12} and \cite{NiReSoTr16} study estimation of  distribution functions such as $\int_{-\infty}^{\cdot} (1 \wedge x^{2}) \nu (dx)$, and prove Donsker-type functional limit theorems for distributional function estimators under low- and high-frequency setups, respectively. We also refer to \cite{BuVe13}, \cite{Ve14}, \cite{BuHoVeDe17}, and \cite{HoVe17} for inference on L\'{e}vy measures. 
However, none of these papers  studies confidence bands for L\'{e}vy densities.  

To the best of our knowledge, \cite{Fi11} and its follow-up paper \cite{KoPa16} are the only references that derive uniform confidence bands for L\'{e}vy densities. They work with the high-frequency setup, but employ sieve (or projection) estimators based on the observation that $\Delta^{-1} \Pr (L_{\Delta} \geq x) \approx \nu([x,\infty))$ for $x > 0$ \citep[see also][]{Fi09}, which  are substantially different from our spectral estimator. 
So, first of all, their results do not cover ours. Similarly to \cite{Sm50} and \cite{BiRo73}, \cite{Fi11} proves that the supremum deviation of the sieve estimator, suitably normalized, converges in distribution to a Gumbel distribution, by using the Koml\'{o}s-Major-Tusn\'{a}dy (KMT)  approximation of the empirical distribution function by Brownian bridges \citep{KoMaTu75}, combined with extreme value theory. \cite{Fi11} uses the Gumbel approximation to construct analytic confidence bands for the L\'{e}vy density, but does not study bootstrap-based confidence bands. However, since the convergence of normal extremes is known to be slow \citep{Ha91}, in standard nonparametric density and regression function estimation, it is often recommended to use versions of bootstraps to construct confidence bands, instead of relying on Gumbel approximations \citep[cf.][]{NePo98,Clva03,ChChKa14a}.\footnote{For the trigonometric basis, \cite{KoPa16} develop an analytical method based on higher oder expansions to improve on the Gumbel approximation; see their Theorem 3.7.} This paper contributes to the literature on nonparametric inference for L\'{e}vy processes by developing bootstrap confidence bands for the first time in the L\'{e}vy density estimation case. Furthermore, spectral-type estimators are among the most commonly used methods for estimation of the L\'{e}vy density \cite[cf.][]{CoGe15, BeRe15}, and developing inference methods for them is practically important.

From a technical point of view, the proofs of the main theorems build on non-trivial applications of the intermediate Gaussian and bootstrap approximation theorems developed in \cite{ChChKa14a,ChChKa14b,ChChKa16}. The analysis of the present paper has some connections to those of \cite{KaSa16,KaSa17} that study confidence bands for deconvolution and nonparametric errors-in-variables regression, respectively. 
However, the high-frequency setup in L\'{e}vy process estimation has different probabilistic structures than the i.i.d. setup in standard nonparametric estimation. 
For example, the increment distribution $P_{\Delta}$ (i.e., the distribution of $L_{\Delta}$) need not be continuous and may have a discrete component (which is in contrast to the standard density estimation case); $P_{\Delta}$ is indexed by $\Delta$ with $\Delta = \Delta_{n} \to 0$ as $n \to \infty$, and degenerates to the point mass at the origin; and the interplay between $\Delta$ and $n$ has to be taken care of. 
In particular, providing low-level regularity conditions for validity of bootstrap confidence bands in the L\'{e}vy density estimation case is far from trivial and requires substantial work. See the discussion after Assumption \ref{as: assumption 1} and Section \ref{sec: condition (ii)}.

In this paper, we assume that data do not contain microstructure noises. We have in mind that the time span $\Delta$ is small but not too small -- say 5 minutes if the unit time is one day. For such cases, \cite{AiXu17} argue that the effect of microstructure noise is small. 

The rest of the paper is organized as follows. In Section \ref{sec: spectral estimation}, we define a spectral estimator for the L\'{e}vy density, and in Section \ref{sec: confidence band} we describe our bootstrap methods to construct confidence bands for the spectral estimator. We consider two bootstrap methods, namely, Gaussian multiplier and empirical bootstraps. In Section \ref{sec: main results}, we present theorems that establish asymptotic validity of the proposed confidence bands. 
In Section \ref{sec: examples}, we provide concrete examples of L\'{e}vy processes that satisfy our regularity conditions. In Section \ref{sec: simulation results},  we propose a practical method for bandwidth selection, and study its finite sample performance via numerical simulations. 
Section \ref{sec: conclusion} concludes. All the proofs are deferred to Appendix.

\subsection{Notation}
We will obey the following notation. For any non-empty set $T$ and any (complex-valued) function $f$ on $T$, let $\| f \|_{T} = \sup_{t \in T}|f(t)|$. Let $\ell^{\infty}(T)$ denote the (real) Banach space of all bounded real-valued functions on $T$ equipped with the sup-norm $\| \cdot \|_{T}$. The Fourier transform of an integrable function $f$ on $\R$ is defined as 
\[
\varphi_{f}(u) = \int_{\R} e^{iux} f(x) dx, \ u \in \R.
\]
For any $x \in \R$, let $\delta_{x}$ denote the Dirac measure at $x$. 
For any $a,b \in \R$, let $a \vee b = \max \{ a,b \}$ and $a \wedge b = \min \{ a,b \}$. For $a \in \R$ and $b > 0$, we use the shorthand notation $[a \pm b] = [a-b,a+b]$. 
For any non-empty set $A$ in $\R$ and any $\varepsilon > 0$, let $A^{\varepsilon} = \{ x \in \R : d(x,A) \leq \varepsilon \}$ where $d(x,A) = \inf_{y \in A} |x-y|$. For any positive sequences $a_{n},b_{n}$, we write $a_{n} \lesssim b_{n}$ if there is a positive constant $C > 0$ independent of $n$ such that $a_{n} \leq Cb_{n}$ for all $n$, $a_{n} \sim b_{n}$ if $a_{n} \lesssim b_{n}$ and $b_{n} \lesssim a_{n}$, and $a_{n} \ll b_{n}$ if $a_{n}/b_{n} \to 0$ as $n \to \infty$. 


\section{Spectral estimation of L\'{e}vy density}
\label{sec: spectral estimation}

We first describe a spectral estimation method for  L\'{e}vy densities. 
Let $Y_{n,j} = L_{j\Delta} - L_{(j-1)\Delta}, j=1,\dots,n$ be increments of discrete observations of the L\'{e}vy process $L$, and observe that $Y_{n,j}, j=1,\dots,n$ are i.i.d. whose common characteristic function is  $\varphi_{\Delta}(u)= \Ep[e^{iuL_{\Delta}}] = e^{\Delta \psi(u)}$.
In this paper, we assume that 
\begin{equation}
\int_{\R} x^{2} \rho(x) dx < \infty,
\label{eq: finite variance}
\end{equation}
which is equivalent to assuming that $\E[ L_{1}^{2} ] < \infty$ (and $\Ep[ L_{t}^{2} ] < \infty$ for all $t > 0$; see \cite{Sa99}, Corollary 25.8). Condition (\ref{eq: finite variance}) ensures that the integral $\int_{\R} (e^{iux} - 1-iux) \rho (x)dx$
is well-defined, so that the characteristic exponent $\psi(u)$ admits an alternative representation: 
\[
\psi(u)  = - \frac{u^{2}\sigma^{2}}{2} + iu\gamma_{c}+ \int_{\R} (e^{iux} - 1- iux ) \rho (x) dx,
\]
where $\gamma_{c} = i^{-1}(e^{\psi (u)})'|_{u=0} = \Ep[ L_{1} ]$. 
Furthermore, under Condition (\ref{eq: finite variance}), differentiating $\psi (u)$ twice, we arrive at the key identity
\[
\psi''(u) = {\varphi''_{\Delta}(u)\varphi_{\Delta}(u) - (\varphi'_{\Delta}(u))^{2} \over \Delta \varphi_{\Delta}^{2}(u)}  = -\sigma^{2}- \int_{\R}e^{iux}x^{2}\rho(x)dx.
\]
Therefore, applying the Fourier inversion, we conclude that
\begin{align}
x^{2}\rho(x) &= {1 \over 2\pi}\int_{\R}e^{-iux}\left (-\psi''(u) -\sigma^{2}\right )du \notag \\
&=  {1 \over 2\pi }\int_{\R}e^{-iux}\left ( {(\varphi'_{\Delta}(u))^{2} - \varphi''_{\Delta}(u)\varphi_{\Delta}(u) \over \Delta \varphi^{2}_{\Delta}(u)} - \sigma^{2}\right ) du,
\label{eq: Fourier}
\end{align}
where the Fourier inversion should be interpreted in the distributional sense if the integral is not well-defined. 
This expression leads to a method to estimate $\rho$.

First, we estimate $\varphi_{\Delta}^{(k)}(u), k=0,1,2$ by $\hat{\varphi}_{\Delta}^{(k)}(u)$, where 
\[
\hat{\varphi}_{\Delta}(u) = \frac{1}{n} \sum_{j=1}^{n} e^{iuY_{n,j}}, \ u \in \R
\]
is the empirical characteristic function ($\varphi_{\Delta}^{(k)}$ denotes the $k$-th derivative of $\varphi_{\Delta}$ with $\varphi_{\Delta}^{(0)} = \varphi_{\Delta}$). Let $W: \R \to \R$ be an integrable function (kernel) such that $\int_{\R} W(x) dx= 1$ and its Fourier transform $\varphi_{W}$ is supported in $[-1,1]$ (i.e., $\varphi_{W}(u) = 0$ for all $|u| > 1$). Then the spectral estimator of $\rho$ is defined by
\begin{equation}
\hat{\rho}(x) = {1 \over 2\pi  x^{2}}\int_{\mathbb{R}}e^{-iux}\left ({(\hat{\varphi}'_{\Delta}(u))^{2} - \hat{\varphi}''_{\Delta}(u)\hat{\varphi}_{\Delta}(u) \over \Delta \hat{\varphi}^{2}_{\Delta}(u)} - \hat{\sigma}^{2}\right ) \varphi_{W}(uh)du \label{eq: spectral estimator}
\end{equation}
for $x \neq 0$, where $h = h_{n}$ is a sequence of positive numbers (bandwidths) such  that $h_{n} \to 0$ as $n \to \infty$, and $\hat{\sigma}^{2}$ is a pilot estimator of $\sigma^{2}$.

Some comments on the spectral estimator $\hat{\rho}$ are in order. 
 First, as long as $h \gtrsim \Delta^{1/2}$, $\inf_{|u| \leq h^{-1}} | \hat{\varphi}_{\Delta}(u)| \gtrsim 1-o_{\Pr}(1)$, so that $\hat{\rho}(x)$ for $x \neq 0$ is well-defined with probability approaching one (see Lemmas \ref{lem: lower bound on chf} and \ref{lem: uniform convergence}).
Second, the function $\hat{\rho}$ is real-valued.
Third, noting that $\hat{\psi}:= \Delta^{-1} \log \hat{\varphi}_{\Delta}$ is well-defined on $[-h^{-1},h^{-1}]$ (with probability approaching one) as the \textit{distinguished logarithm}  \citep[][Theorem 7.6.2]{Ch01}, we see that the spectral estimator $\hat{\rho}$ can  be alternatively expressed as 
\[
\hat{\rho}(x) = \frac{1}{2\pi x^{2}} \int_{\R} e^{-iux} \left ( -\hat{\psi}''(u) - \hat{\sigma}^{2} \right ) \varphi_{W}(uh) du
\]
for $x \neq 0$. Finally, in this  paper, we are interested in estimating $\rho$ on a compact set $I$ away from the origin (e.g. $[a,b] \cup [c,d]$ for $a<b<0<c<d$), and therefore, as long as $|W(x)|$ decays sufficiently fast as $|x| \to \infty$, we may take $\hat{\sigma}^{2}=0$ in theory. See the discussion after Assumption \ref{as: assumption 1} below. However, in our simulation studies, we found that, in case of $\sigma > 0$, using a proper estimator for $\hat{\sigma}^{2}$ improves on the empirical performance of the estimator $\hat{\rho}$ and the inference methods, especially if the set $I$ is close to the origin. Therefore, we recommend to plug-in a proper estimator of $\sigma^{2}$. There are several existing estimators for $\sigma^{2}$; see Example \ref{ex: volatility estimator} below.

Our spectral estimator (\ref{eq: spectral estimator}) is considered and studied in \cite{Be11} and \cite{Gu12} under the low-frequency setup. \cite{NiReSoTr16} use the spectral estimator (\ref{eq: spectral estimator}) to construct estimators for distribution functions such as $\int_{-\infty}^{\cdot} (1 \wedge x^{2}) \nu (dx)$, and prove Donsker-type functional limit theorems for distribution function estimators under the high-frequency setup. There are versions of spectral-type estimators of $\rho$ similar to but different from ours (\ref{eq: spectral estimator}). For example, in case of $\sigma=0$, \cite{CoGe11} consider simplified versions of the estimator (\ref{eq: spectral estimator}) by replacing $\hat{\varphi}_{\Delta}(u)$ with 1 and/or $\hat{\varphi}_{\Delta}'(u)$ with $0$. Such simplifications produce additional biases that depend on $\Delta$ (but not on smoothness of $x^{2}\rho$); since the problem of bias is already  delicate  in construction of confidence bands in standard nonparametric estimation \citep[cf.][Section 5.7]{Wa06}, producing additional biases is not favorable to our goal from both theoretical and practical view points. Hence, in this paper, we  focus on the current spectral estimator (\ref{eq: spectral estimator}).
It is worth pointing out that the identification (\ref{eq: Fourier}) of the L\'{e}vy density $\rho$ holds  without relying on the assumption that $\Delta \to 0$, and therefore the deterministic bias of our spectral estimator $\hat{\rho}$ does not depend on $\Delta$; see the discussion after Assumption \ref{as: assumption 1} below. 

Furthermore, under relatively mild conditions, our spectral estimator (\ref{eq: spectral estimator}) is consistent under the weighted  sup-norm on $\R$, $\| f \|_{w,\infty} = \sup_{x \in \R} |x^{2}f(x)|$ (see Appendix \ref{sec: uniform convergence}), and thereby is able to capture the shape of $\rho$ \textit{globally} (i.e., uniformly over $\R \setminus \{ 0 \}$), which we believe is an attractive feature of the spectral estimator $\hat{\rho}$. 
  

%

\begin{example}[Examples of estimators for $\sigma^{2}$]
\label{ex: volatility estimator}

There are several consistent estimators for $\sigma^{2}$ available in the literature on high-frequency data analysis for continuous-time stochastic processes. We provide a couple of examples here. The first example is the truncated realized volatility (TRV)  estimator proposed in \cite{M(2001)} :
\begin{equation}
\hat{\sigma}^2_{TRV} = {1 \over n\Delta}\sum_{j=1}^{n}Y_{n,j}^{2}1_{\{|Y_{n,j}| \leq \alpha_{0}\Delta^{\theta_{0}} \}}, \label{eq: TRV}
\end{equation}
where $\alpha_{0}>0$ and $\theta_{0} \in (0,1/2)$. The second example is the power variation (PV) estimator proposed in \cite{BS(2004)}:
\[
\hat{\sigma}^2_{PV} (\alpha)= \left({1 \over n\Delta^{\alpha/2} m_{\alpha}}\sum_{j=1}^{n}|Y_{n,j}|^{\alpha}\right)^{2/\alpha},
\]
where $\alpha \in (0,2)$ and $m_{\alpha} = 2^{\alpha/2}\Gamma ((\alpha+1)/2)/\sqrt{\pi}$ is the $\alpha$-th absolute moment of $N(0,1)$. \cite{JaRe14} study the optimal rate of convergence for estimating $\sigma^2$ in the minimax sense and propose the following estimator (modified to our setup): 
\[
\hat{\sigma}^2_{JR} = -{2 \over \Delta u_{n}^{2}}(\log|\hat{\varphi}_{\Delta}(u_{n})|)1_{\{\hat{\varphi}_{\Delta}(u_{n}) \neq 0\}},
\]
where $u_{n} \propto \sqrt{(\log n)/\Delta}$ is a deterministic sequence. 
In our simulation studies, we use $\hat{\sigma}^{2}_{TRV}$ as an estimator of $\sigma^2$. 

Strictly speaking, the references cited above study the asymptotic properties of the estimators under a different high-frequency setup that $\Delta \to 0$ as $n \to \infty$ but $n \Delta$ is fixed.
For the asymptotic properties of the PV and JR estimators under our setup, we refer to \citet[][Proposition 5.3]{CoGe11} \citep[see also][]{AiJa07} and \citet[][Proposition 13]{NiReSoTr16}, respectively. 
For the sake of completeness, we summarize the asymptotic properties of the TRV estimator in the following lemma.
See Appendix \ref{sec: appendix c} for the proof.

\begin{lemma}
\label{lem: TRV}
Suppose that the L\'{e}vy measure satisfies $\int_{[-1,1]} |x|^{\alpha} \nu (dx) < \infty$ for some $\alpha \in (0,2)$, and if $\alpha \in [1,2)$, then assume in addition that  $\int_{[-1,1]^{c}}|x|\nu(dx)<\infty$. Then 
$| \hat{\sigma}^{2}_{TRV} - \sigma^{2}| = O_{\Pr}(n^{-1/2} + \Delta^{(2-\alpha)\theta_{0}})$. 
\end{lemma}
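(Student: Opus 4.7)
I would proceed by the standard bias/variance split, based on the L\'evy--It\^o decomposition performed at the truncation scale $\epsilon_n := \alpha_0 \Delta^{\theta_0}$. Writing
\[
L_t = b_n t + \sigma B_t + M_t + N_t,
\]
where $M$ is the pure-jump L\'evy process (compensated when $\alpha \geq 1$) with L\'evy measure $\nu|_{\{|x| \leq \epsilon_n\}}$ and $N$ is an independent compound Poisson with L\'evy measure $\nu|_{\{|x| > \epsilon_n\}}$ and intensity $\lambda_n := \nu(\{|x|>\epsilon_n\}) \lesssim \epsilon_n^{-\alpha}$ (the last bound from $\int_{[-1,1]}|x|^{\alpha}\nu(dx) < \infty$), one decomposes the increments as $Y_{n,j} = U_{n,j} + V_{n,j}$ with the two summands independent. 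The supplementary hypothesis $\int_{[-1,1]^{c}}|x|\nu(dx) < \infty$ in the case $\alpha \in [1,2)$ is what ensures that $b_n = O(\epsilon_n^{1-\alpha})$ is well-defined and polynomially controlled in $\epsilon_n$.

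\textbf{Bias step.} I would split $\Ep[Y_{n,1}^{2} 1_{|Y_{n,1}| \leq \epsilon_n}]$ according to whether $V_{n,1} = 0$. On $\{V_{n,1} \neq 0\}$ the truncation forces $Y_{n,1}^{2} \leq \epsilon_n^{2}$, so the contribution is at most $\epsilon_n^{2} \Pr(V_{n,1} \neq 0) \leq \epsilon_n^{2} \lambda_n \Delta \lesssim \epsilon_n^{2-\alpha} \Delta$. On $\{V_{n,1} = 0\}$ we have $Y_{n,1} = U_{n,1}$, and the L\'evy moment/cumulant formula gives
\[
\Ep[U_{n,1}^{2}] = \sigma^{2} \Delta + \Delta \int_{|x| \leq \epsilon_n} x^{2} \nu(dx) + (b_n \Delta)^{2} = \sigma^{2} \Delta + O(\Delta \epsilon_n^{2-\alpha}),
\]
where I use the key inequality $\int_{|x| \leq \epsilon_n} x^{2} \nu(dx) \leq \epsilon_n^{2-\alpha} \int_{[-1,1]} |x|^{\alpha} \nu(dx)$ and observe that $(b_n \Delta)^{2}$ is of strictly lower order precisely because $\alpha \theta_0 < 1$. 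The truncation remainder $\Ep[U_{n,1}^{2} 1_{|U_{n,1}| > \epsilon_n}]$ is exponentially small in $\epsilon_n^{2}/\Delta = \Delta^{2\theta_0 - 1}$ by a Bernstein-type tail bound for L\'evy processes with jumps bounded by $\epsilon_n$. Putting the pieces together yields $|\Ep[\hat\sigma^{2}_{TRV}] - \sigma^{2}| \lesssim \Delta^{(2-\alpha)\theta_0}$.

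\textbf{Concentration step.} Since the summands $Y_{n,j}^{2} 1_{|Y_{n,j}| \leq \epsilon_n}$ are i.i.d., Chebyshev reduces the task to bounding $\Var(Y_{n,1}^{2} 1_{|Y_{n,1}| \leq \epsilon_n}) \leq \Ep[Y_{n,1}^{4} 1_{|Y_{n,1}| \leq \epsilon_n}]$. Splitting again on $\{V_{n,1} = 0\}$ and using the cumulant formula $\Ep[U_{n,1}^{4}] = 3 (\Ep[U_{n,1}^{2}])^{2} + \Delta \int_{|x| \leq \epsilon_n} x^{4} \nu(dx) + (\text{lower order}) = O(\Delta^{2} + \Delta \epsilon_n^{4-\alpha})$, combined with $\epsilon_n^{4} \lambda_n \Delta \lesssim \Delta \epsilon_n^{4-\alpha}$ on the complementary event, gives $\Var(\hat\sigma^{2}_{TRV}) \lesssim n^{-1} + \epsilon_n^{4-\alpha}/(n\Delta)$. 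The square root of the second summand equals $\epsilon_n^{2-\alpha} \sqrt{\epsilon_n^{\alpha}/(n\Delta)}$, which is $o(\epsilon_n^{2-\alpha}) = o(\Delta^{(2-\alpha)\theta_0})$ by the high-frequency assumption $n\Delta \to \infty$. The triangle inequality combining the bias bound and the stochastic deviation delivers the claimed $O_{\Pr}(n^{-1/2} + \Delta^{(2-\alpha)\theta_0})$.

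\textbf{Main obstacle.} The chief technical difficulty is to treat the cases $\alpha < 1$ and $\alpha \in [1,2)$ in a unified manner. In the latter range the drift $b_n$, which absorbs the compensator of the jumps with $\epsilon_n < |x| \leq 1$, grows like $\epsilon_n^{1-\alpha}$, and one must verify that all the induced terms such as $(b_n \Delta)^{2}$ entering the moment expansions remain dominated by the leading jump term $\Delta \epsilon_n^{2-\alpha}$; this is exactly where the supplementary hypothesis $\int_{[-1,1]^{c}}|x|\nu(dx) < \infty$ is used, via guaranteeing that $b_n$ is finite and polynomially controlled in $\epsilon_n$ when $\alpha \geq 1$.
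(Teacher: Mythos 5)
Your approach is genuinely different from the paper's. The paper performs the L\'evy--It\^o split at the \emph{fixed} threshold $|x|=1$, isolating $L^{(1)}_t=\gamma_0 t+\sigma B_t$ (drift plus Brownian) from the \emph{entire} jump process $L^{(2)}$ (compensated when $\alpha\in[1,2)$); it then writes $\hat\sigma^2_{TRV}-\sigma^2=A_{1,n}-A_{2,n}+A_{3,n}$, where $A_{1,n}$ is a Gaussian CLT term, $A_{2,n}$ is the Gaussian truncation error handled by a high-moment Markov bound, and the delicate remainder $A_{3,n}=\hat\sigma^2_{TRV}-\tfrac{1}{n\Delta}\sum_{j}(L^{(1)}_{n,j})^2 1_{\{|L^{(1)}_{n,j}|\le\alpha_0\Delta^{\theta_0}\}}$, which encodes the interaction between jumps and truncation, is dispatched in one stroke by Lemma 13.2.6 in Jacod and Protter (2012), yielding $\Ep|A_{3,n}|\lesssim\Delta^{(2-\alpha)\theta_0}$. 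Your moving-threshold split at $\epsilon_n=\alpha_0\Delta^{\theta_0}$ tries to control that interaction by hand. Most of your estimates --- $\nu(\{|x|>\epsilon_n\})\lesssim\epsilon_n^{-\alpha}$, $\int_{|x|\le\epsilon_n}x^2\nu(dx)\lesssim\epsilon_n^{2-\alpha}$, the drift control via $\alpha\theta_0<1$, and the variance step --- are in order. But one step of the bias analysis contains a genuine error.

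The assertion that $\Ep\bigl[U_{n,1}^2 1_{\{|U_{n,1}|>\epsilon_n\}}\bigr]$ is ``exponentially small in $\epsilon_n^2/\Delta$ by a Bernstein-type tail bound for L\'evy processes with jumps bounded by $\epsilon_n$'' does not hold; the exponential mechanism is simply not there, and is absent altogether when $\sigma=0$. The Brownian component of $U_{n,1}$ does have Gaussian tails, but the compensated small-jump part $M_\Delta$ has individual jump sizes allowed up to $\epsilon_n$, and at the deviation scale $t=\epsilon_n$ one is deep in the Poisson-tail regime: the crossover scale $\Var(M_\Delta)/\epsilon_n\lesssim\Delta\epsilon_n^{1-\alpha}$ is of strictly smaller order than $\epsilon_n$ because $\Delta\epsilon_n^{-\alpha}=\Delta^{1-\alpha\theta_0}\to 0$, so Bernstein only yields an $O(1)$ bound on $\Pr(|M_\Delta|>\epsilon_n)$, and in fact a single jump of size comparable to $\epsilon_n$ already occurs with probability of order $\Delta\,\nu(\{|x|\gtrsim\epsilon_n\})\asymp\Delta^{1-\alpha\theta_0}$ --- polynomially, not exponentially, small. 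The step can be repaired without the false claim: split $\{|U_{n,1}|>\epsilon_n\}\subset\{|\sigma B_\Delta|>\epsilon_n/3\}\cup\{|M_\Delta|>\epsilon_n/3\}$ (the drift event is empty for small $\Delta$), use Gaussian tails and independence on the first event, and on the second event simply use $\Ep[M_\Delta^2 1_{\{|M_\Delta|>\epsilon_n/3\}}]\le\Ep[M_\Delta^2]\lesssim\Delta\epsilon_n^{2-\alpha}$, which is already the admissible order, together with $(\sigma^2\Delta+(b_n\Delta)^2)\Pr(|M_\Delta|>\epsilon_n/3)\lesssim\Delta\cdot\Delta\epsilon_n^{-\alpha}=o(\Delta\epsilon_n^{2-\alpha})$ by Chebyshev for the cross terms. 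With that repair the rate $O_{\Pr}(n^{-1/2}+\Delta^{(2-\alpha)\theta_0})$ does follow, but as written the argument would not survive scrutiny.
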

\end{example}

\subsection{Comparison with direct kernel estimation: preliminary simulations}

Alternatively to spectral-type estimation methods, exploiting the assumption that $\Delta \to 0$ as $n \to \infty$, we can estimate the L\'{e}vy density $\rho(x)$ for $x \neq 0$ by applying directly kernel density estimation to increments $Y_{n,1},\dots,Y_{n,n}$, i.e., 
\begin{equation}
\hat{\rho}^{direct}(x) = \frac{1}{n\Delta h} \sum_{j=1}^{n} W^{direct}((x-Y_{n,j})/h), \label{eq: direct kernel estimator}
\end{equation}
where $W^{direct}: \R \to \R$ is a compactly supported smooth kernel function. In fact, for a given fixed $x \neq 0$, if $\rho$ is Lipschitz continuous in a neighborhood of $x$, then from Lemma B.1 and Proposition 2.1 in \cite{Fi11}, we can show that
\[
\Ep[ \hat{\rho}^{direct}(x)] = \underbrace{\frac{1}{h} \int_{\R} W^{direct}((x-y)/h) \rho(y) dy}_{=\rho (x) + O(h)} + O(\Delta/h),
\]
so that $\hat{\rho}^{direct}$ is consistent for $\rho$ at $x$ under appropriate regularity conditions. Actually, the direct kernel estimator (\ref{eq: direct kernel estimator}) is mentioned in \citet[][Section 4.1]{Fi11a}, although the detailed properties of (\ref{eq: direct kernel estimator}) are not studied there. 
From a theoretical point of view, it is rather easier to develop inference methods for $\hat{\rho}^{direct}$ than the spectral estimator (\ref{eq: spectral estimator}) under the high-frequency setup, since the former is of simpler form than the latter.
So, one might be tempted to wonder why we bother to use a more complicated estimator $\hat{\rho}$.  

It turns out that, however, in the finite sample, the direct kernel estimate (\ref{eq: direct kernel estimator}) tends to have (much) larger biases, especially near the origin, than the spectral estimate (\ref{eq: spectral estimator}). 
Figures \ref{fig:comparison1} and \ref{fig:comparison2} compare realizations of direct kernel estimates with Epanechnikov kernel and spectral estimates with a flap-top kernel for a jump-diffusion process $L_{t}=B_{t}+J_{t}$ where $B=(B_{t})_{t \geq 0}$ is a standard Brownian motion and $J=(J_{t})_{t \geq 0}$ is a compound Poisson process (independent of $B$) with intensity $\lambda =10$ and jump size distribution $N(0,v^{2})$ ($v=0.5$), and for a Gamma process with parameter $(c_{+},\lambda_{+})=(0.2,1)$ (i.e, $L_{\Delta}$ has Gamma distribution with shape parameter $c_{+}\Delta$ and scale parameter $1$). Here we set $(n,\Delta)=(50000, 0.01)$. Preliminary calibrations show that $h=\sqrt{\Delta}=0.1$ works well for for both estimates, and we set this bandwidth value to generate these figures. The flap-top kernel used for the spectral estimate is defined by the inverse Fourier transform of equation (\ref{eq: flap-top}) ahead with $b=1$ and $c=0.05$. Furthermore, for the spectral estimate, we plug-in  the TRV estimate $\hat{\sigma}^{2}_{TRV}$ with $\alpha_{0}=3$ and $\theta_{0} =  0.48$ for the jump-diffusion case, and 
set $\hat{\sigma}^{2} = 0$ for the Gamma process case (using the TRV estimate for the Gamma process case produced almost same simulation results).

\begin{figure}
  \begin{center}
    \begin{tabular}{cc}

      \begin{minipage}{0.5\hsize}
        \begin{center}
          \includegraphics[clip, width=6cm]{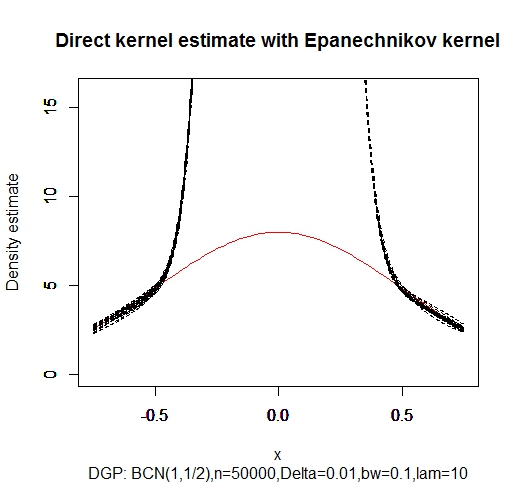}
        \end{center}
      \end{minipage}

      \begin{minipage}{0.5\hsize}
        \begin{center}
          \includegraphics[clip, width=6cm]{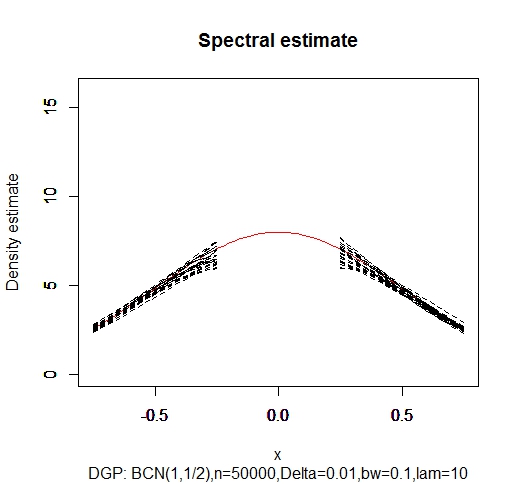}
        \end{center}
      \end{minipage}

    \end{tabular}
    \caption{{\small Plots of 25 realizations of direct kernel estimates with Epanechnikov kernel (left) and spectral estimates with a flap-top kernel (right) on $[-0.75,-0.25] \cup [0.25,0.75]$ for $L_{t} =  B_{t} + J_{t}$ where $B_{t}$ is a standard Brownian motion and $J_{t}$ is a compound Poisson process with intensity $\lambda = 10$ and jump size distribution $N(0,v^{2})$ ($v=0.5$). The bandwidth value is $h=\sqrt{\Delta} = 0.1$. The red lines correspond to the true L\'{e}vy density $\rho (x) = \lambda  e^{-x^{2}/(2v^{2})}/\sqrt{2\pi v^{2}}$.  \label{fig:comparison1}}}
  \end{center}
\end{figure}

\begin{figure}
  \begin{center}
    \begin{tabular}{cc}

      \begin{minipage}{0.5\hsize}
        \begin{center}
          \includegraphics[clip, width=6cm]{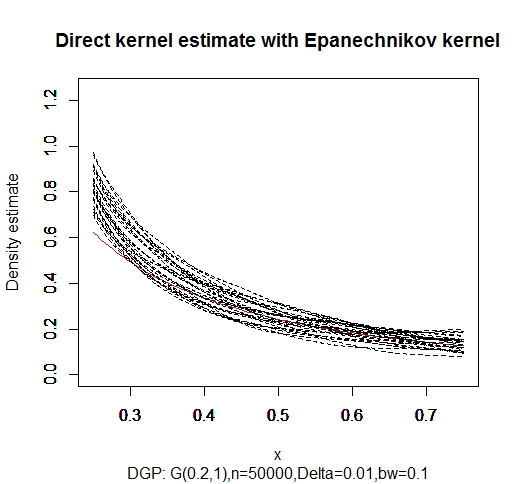}
        \end{center}
      \end{minipage}

      \begin{minipage}{0.5\hsize}
        \begin{center}
          \includegraphics[clip, width=6cm]{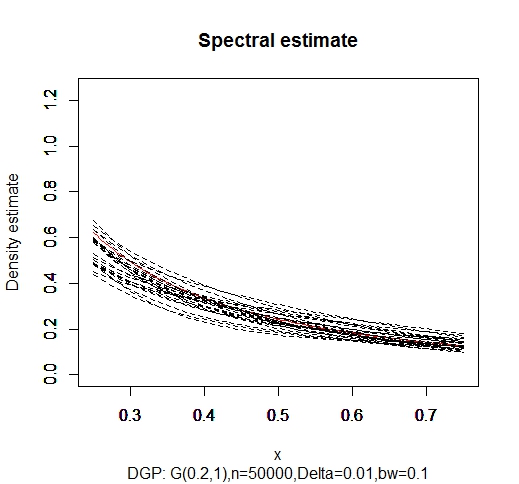}
        \end{center}
      \end{minipage}

    \end{tabular}
    \caption{{\small Plots of 25 realizations of direct kernel estimates with Epanechnikov kernel (left) and spectral estimates with a flap-top kernel (right) on $[0.25,0.75]$ where $L_{t}$ is a Gamma process with parameter $(c_{+},\lambda_{+}) = (0.2,1)$, i.e., $L_{\Delta}$ has Gamma distribution with shape parameter $c_{+}\Delta$ and scale parameter $\lambda_{+}$.  The bandwidth value is $h=\sqrt{\Delta} = 0.1$. The red lines correspond to the true L\'{e}vy density $\rho (x) = c_{+} x^{-1} e^{-\lambda_{+} x}1_{(0,\infty)}(x)$.  \label{fig:comparison2}}}
  \end{center}
\end{figure}

These figures show that the direct kernel estimate  has large biases especially near the origin, whereas the spectral estimate performs reasonably well on the entire set for each case ($[-0.75,-0.25] \cup [0.25,0.75]$ for the jump-diffusion case, and $[0.25,0.75]$ for the Gamma process case). In particular, the direct kernel estimate exhibits unreasonable behaviors for $|x| < 0.5$ in the jump-diffusion case. Intuitively, such unreasonable behaviors can be understood from the observation that the increment distribution $P_{\Delta}$ (i.e., the distribution of $L_{\Delta}$) of the jump-diffusion process is the convolution of $N(0,\Delta v^{2})$ with the compound Poisson distribution that has a point mass at the origin, and therefore, the density of $P_{\Delta}$ has a sharp peak around the origin. 
Since the direct kernel estimate is estimating the density of $P_{\Delta}$ scaled by $\Delta^{-1}$, it tends to return unreasonably large values near the origin (we also note that, since $\Pr (|N(0,v^{2})| \geq 0.5) \approx 0.3$ for $v=0.5$, the interval $(-0.5,0.5)$ is not a ``small'' neighborhood of the origin). 
These preliminary simulation results motivate us to study inference methods for the spectral estimator (\ref{eq: spectral estimator}) rather than the direct kernel estimator (\ref{eq: direct kernel estimator}). 

\section{Construction of confidence bands}
\label{sec: confidence band}
We consider to construct confidence bands for $\rho$ on a compact set  $I$ in $\R \setminus \{ 0 \}$. For example, $I=[a,b]$ for $0 < a < b \ \text{or} \ a < b < 0$; or $I=[a,b] \cup [c,d]$ for $a < b < 0 < c < d$.
The set $I$ may be a singleton, i.e., $I=\{ x_{0} \}$ for $x_{0} \neq 0$, although we are primarily interested in the above two cases.

Under the regularity conditions stated below, we will show that $x^{2}(\hat{\rho}(x) - \rho(x))$ can be approximated by 
\begin{equation}
\frac{-1}{2\pi \Delta}\int_{\R}e^{-iux} \frac{(\hat{\varphi}_{\Delta} - \varphi_{\Delta})''}{\varphi_{\Delta}}(u) \varphi_{W}(uh) du \label{eq: main term}
\end{equation}
uniformly in $x \in I$. By a change of variables, we may rewrite the term (\ref{eq: main term}) as
\begin{equation}
\frac{1}{n\Delta h} \sum_{j=1}^{n}\{ Y_{n,j}^{2}K_{n}((x-Y_{n,j})/h) - \Ep[Y_{n,1}^{2}K_{n}((x-Y_{n,1})/h) ] \}, \label{eq: intermediate process}
\end{equation}
where $K_{n}$ is the function defined by
\[
K_{n}(x) = \frac{1}{2\pi} \int_{\R} e^{-iux} \frac{\varphi_{W}(u)}{\varphi_{\Delta}(u/h)} du, \ x \in \R. 
\]
Note that $K_{n}$ is well-defined and real-valued. Define
\[
s_{n}^{2} (x) = \Var (Y_{n,1}^{2}K_{n}((x-Y_{n,1})/h)), \ s_{n}(x) = \sqrt{s_{n}^{2}(x)}, \ x \in I, 
\]
and consider the process
\[
Z_{n}(x) = \frac{1}{s_{n}(x)\sqrt{n}} \sum_{j=1}^{n} \{ Y_{n,j}^{2}K_{n}((x-Y_{n,j})/h) - \Ep[Y_{n,1}^{2}K_{n}((x-Y_{n,1})/h) ] \}, \ x \in I.
\]
Under the regularity conditions stated below, $\inf_{x \in I} s_{n}^{2}(x) \gtrsim \Delta h$ for sufficiently large $n$. 
Furthermore, we will show that there exists a tight Gaussian random variable $Z_{n}^{G}$ in $\ell^{\infty}(I)$ with mean zero and the same covariance function as $Z_{n}$, and such that the distribution of $\| Z_{n}^{G} \|_{I} = \sup_{x \in I} |Z_{n}^{G}(x)|$ asymptotically approximates that of $\| Z_{n} \|_{I}$ in the sense that
\[
\sup_{z \in \R}\left |  \Pr \{ \| Z_{n} \|_{I} \leq z \} - \Pr \{ \| Z_{n}^{G} \|_{I} \leq z \} \right | \to 0
\]
as $n \to \infty$, which in turn yields that 
\begin{equation}
\sup_{z \in \R}  \left | \Pr \{ \| \sqrt{n}\Delta h x^{2}(\hat{\rho} - \rho)/s_{n} \|_{I} \leq z \} - \Pr \{ \| Z_{n}^{G} \|_{I} \leq z \} \right | \to 0.
\label{eq: Gaussian approximation 0}
\end{equation}
Hence, construction of confidence bands for $\rho$ on $I$ reduces to approximating or estimating quantiles of $\| Z_{n}^{G} \|_{I}$. In fact, let 
\begin{align*}
c_{n}^{G}(1-\tau) &= \text{$(1-\tau)$-quantile of $\|Z_{n}^{G}\|_{I}$} \\
&:= \inf \{ z \in \R : \Pr \{ \| Z_{n}^{G} \|_{I} \leq z \} \geq 1-\tau \}
\end{align*}
for $\tau \in (0,1)$, and consider a confidence band of the form
\[
\hat{\mathcal{C}}_{1-\tau} (x) = \left [ \hat{\rho}(x) \pm \frac{s_{n}(x)}{x^{2}\sqrt{n} h \Delta} c_{n}^{G}(1-\tau) \right ], \ x \in I.
\]
Since $\rho (x) \in \hat{\mathcal{C}}_{1-\tau} (x) \ \forall x \in I \Leftrightarrow \| \sqrt{n}\Delta h x^{2} (\hat{\rho} - \rho)/s_{n} \|_{I} \leq c_{n}^{G}(1-\tau)$, 
it is seen that
\[
\Pr \{ \rho (x) \in \hat{\mathcal{C}}_{1-\tau}(x) \ \forall x \in I \} = \Pr \{ \| Z_{n}^{G} \|_{I} \leq c_{n}^{G}(1-\tau) \} + o(1) \geq 1-\tau +o(1),
\]
so that $\hat{\mathcal{C}}_{1-\tau}$ will be a valid confidence band for $\rho$ on $I$ with level approximately $1-\tau$, provided that (\ref{eq: Gaussian approximation 0}) holds. 

Now, we shall  estimate the quantile $c_{n}^{G}(1-\tau)$, in addition to the variance function $s_{n}^{2}(x)$. 
The latter can be estimated from 
\[
\hat{s}_{n}^{2}(x) = \frac{1}{n}\sum_{j=1}^{n}Y_{n,j}^{4}\hat{K}_{n}^{2}((x-Y_{n,j})/h) - \left \{ \frac{1}{n}\sum_{j=1}^{n}Y_{n,j}^{2}\hat{K}_{n}((x-Y_{n,j})/h) \right \}^{2}, \ x \in I,
\]
where $\hat{K}_{n}$ is the function defined by
\[
\hat{K}_{n}(x) = \frac{1}{2\pi} \int_{\R} e^{-iux} \frac{\varphi_{W}(u)}{\hat{\varphi}_{\Delta}(u/h)} du, \ x \in \R.
\]
Note that as long as $h \gtrsim \Delta^{1/2}$, $\inf_{|u| \leq h^{-1}} | \hat{\varphi}_{\Delta}(u) | \gtrsim 1-o_{\Pr}(1)$, so that $\hat{K}_{n}$ is well-defined with probability approaching one. 
To estimate the quantile $c_{n}^{G}(1-\tau)$, we consider two bootstrap methods. The one is the Gaussian multiplier (or wild) bootstrap, and the other is the empirical (or Efron's) bootstrap. 

\subsection*{Gaussian multiplier bootstrap} Generate $\xi_{1},\dots, \xi_{n} \sim N(0,1)$ i.i.d., 
independent of the data $\mathcal{D}_{n} = \{ Y_{n,j} \}_{j=1}^{n}$, and construct the multiplier process 
\[
\hat{Z}_{n}^{MB}(x) = \frac{1}{\hat{s}_{n}(x)\sqrt{n}} \sum_{j=1}^{n} \xi_{j} \left \{ Y_{n,j}^{2} \hat{K}_{n}((x-Y_{n,j})/h) - n^{-1} {\textstyle \sum}_{j'=1}^{n} Y_{n,j'}^{2} \hat{K}_{n}((x-Y_{n,j'}/h) \right \}, \ x \in I,
\]
where $\hat{s}_{n}(x) = \sqrt{\hat{s}_{n}^{2}(x)}$. Note that under the regularity conditions stated below, $\inf_{x \in I} \hat{s}_{n}^{2}(x) \gtrsim (1-o_{\Pr}(1)) \Delta h$, so that $\hat{Z}_{n}^{MB}$ is well-defined with probability approaching one. Conditionally on the data $\mathcal{D}_{n}$, $\hat{Z}_{n}^{MB}$ is a Gaussian process whose covariance function ``estimates'' that of $Z_{n}^{G}$. Hence, we estimate $c_{n}^{G}(1-\tau)$ by 
\[
\hat{c}_{n}^{MB}(1-\tau) = \text{conditional $(1-\tau)$-quantile of $\| \hat{Z}_{n}^{MB} \|_{I}$ given $\mathcal{D}_{n}$},
\]
which can be computed via simulations. The resulting confidence band is 
\[
\hat{\mathcal{C}}_{1-\tau}^{MB}(x) = \left [ \hat{\rho}(x) \pm \frac{\hat{s}_{n}(x)}{x^{2}\sqrt{n} h \Delta} \hat{c}_{n}^{MB}(1-\tau) \right ], \ x \in I.
\]

\subsection*{Empirical bootstrap} Next, we consider the empirical bootstrap. Let $P_{n,\Delta} = n^{-1} \sum_{j=1}^{n} \delta_{Y_{n,j}}$ denote the empirical distribution. Conditionally on the data, generate $Y_{n,1}^{b}, \dots, Y_{n,n}^{b} \sim P_{n,\Delta}$ i.i.d., 
and construct the bootstrap process 
\[
\hat{Z}_{n}^{EB}(x) = \frac{1}{\hat{s}_{n}(x)\sqrt{n}} \sum_{j=1}^{n} \left \{ (Y_{n,j}^{b})^{2} \hat{K}_{n}((x-Y_{n,j}^{b})/h) - n^{-1} {\textstyle \sum}_{j'=1}^{n} Y_{n,j'}^{2} \hat{K}_{n}((x-Y_{n,j'})/h) \right \}, \ x \in I.
\]
We estimate $c_{n}^{G}(1-\tau)$ by 
\[
\hat{c}_{n}^{EB}(1-\tau) = \text{conditional $(1-\tau)$-quantile of $\| \hat{Z}_{n}^{EB} \|_{I}$ given $\mathcal{D}_{n}$}.
\]
The resulting confidence band is 
\[
\hat{\mathcal{C}}_{1-\tau}^{EB}(x) = \left [ \hat{\rho}(x) \pm \frac{\hat{s}_{n}(x)}{x^{2}\sqrt{n} h \Delta} \hat{c}_{n}^{EB}(1-\tau) \right ], \ x \in I.
\]

\begin{remark}[Scaling by $1/x^{2}$]
One might think that, because of the scaling by $1/x^{2}$, our confidence bands would be too wide if $x$ is close to the origin. However, heuristically, the standard deviation function $s_{n}(x)$ would scale  like  $x^{2} \sqrt{\Delta h \rho(x)}$ for $x \neq 0$,  so the scaling factor $1/x^{2}$ would be canceled out and $s_{n}(x)/(x^{2} \sqrt{n} \Delta h)$ would scale like $\sqrt{\rho(x)/(n\Delta h)}$. 
To see this, assuming that $\rho$ has finite fourth moment (which ensures that $P_{\Delta}$ has finite fourth moment), observe that since $y^{4}P_{\Delta}(dy)/\Delta \to y^{4}\rho (y) dy$ weakly as finite measures\footnote{To  see this, observe that, for each $u \in \R$, $\varphi^{(4)}_{\Delta}(u) = \Delta \psi^{(4)}(u) \varphi_{\Delta}(u) + o(\Delta) = \Delta \int_{\R} e^{iuy} y^{4} \rho(y) dy + o(\Delta)$, so that 
\[
\int_{\R} e^{iuy} \frac{y^{4}P_{\Delta}(dy)}{\Delta} = \frac{\varphi_{\Delta}^{(4)}(u)}{\Delta} \to \int_{\R} e^{iuy} y^{4} \rho(y) dy, 
\]
which implies that $y^{4}P_{\Delta}(dy)/\Delta \to y^{4} \rho(y) dy$ weakly.}, we have that, heuristically,
\begin{align*}
s_{n}^{2}(x) &\approx \Ep[ Y_{n,1}^{4} K^{2}_{n}((x-Y_{n,1})/h)] \\
&\approx \Delta \int_{\R} y^{4} K_{n}^{2}((x-y)/h) \rho(y)dy \\
&\approx \Delta h x^{4} \rho(x) \int_{\R} K_{n}^{2}(y)dy.
\end{align*}
Of course, these approximations are only heuristic, but the discussion so far at least provides a partial explanation for that the scaling factor $1/x^{2}$ would not too much inflate the width of our bands. See also figures in Section \ref{sec: simulation results}.

\end{remark}

\section{Main results}
\label{sec: main results}

\subsection{Validity of bootstrap confidence bands}

In this section, we prove validity of the proposed confidence bands $\hat{\mathcal{C}}_{1-\tau}^{MB}$ and $\hat{\mathcal{C}}_{1-\tau}^{EB}$. 
To this end, we make the following assumption. 
Recall that a function $f: \R \to \R$ is said to be $\alpha$-H\"{o}lder continuous for $\alpha \in (0,1]$ if
\[
\sup_{x,y \in \R, x \neq y} \frac{|f(x) - f(y)|}{|x-y|^{\alpha}} < \infty.
\]
Let $P_{\Delta}$ denote the distribution of $L_{\Delta} =Y_{n,1}$, and for any $b \in \R$, let $P_{\Delta,b}$ denote the distribution of $L_{\Delta} - b\Delta$. Let $I$ be a compact set in $\R \setminus \{ 0 \}$. 
\begin{assumption}
\label{as: assumption 1}
We assume the following conditions. 
\begin{enumerate}
\item[(i)] $\int_{\R}x^{4} \rho(x) dx < \infty$. 
\item[(ii)] There exists $b \in \R$ such that the measure $y^{4}P_{\Delta,b}(dy)$ has Lebesgue density $g_{\Delta,b}$ such that  $\| g_{\Delta,b} \|_{\R} \lesssim \Delta$. Furthermore, 
$\inf_{y \in I^{\varepsilon_{0}}} g_{\Delta,b} (y)\gtrsim \Delta$ for some sufficiently small $\varepsilon_{0} > 0$ such that $0 \not \in I^{\varepsilon_{0}}$. 
\item[(iii)] Let $r > 0$, and let $p$ be the integer such that $p < r \leq p+1$. The function $x^{2} \rho$ is $p$-times differentiable, and $(x^{2}\rho)^{(p)}$ is $(r-p)$-H\"{o}lder continuous.
\item[(iv)] Let $W: \R \to \R$ be an integrable function (kernel) such that 
\begin{equation}
\label{eq: kernel}
\begin{cases}
&\int_{\R} W(x) dx = 1, \ \int_{\R} |x|^{p+1} |W(x)| dx < \infty, \\
&\int_{\R} x^{\ell} W(x) dx=0, \ell=1,\dots,p, \\
&\text{$\varphi_{W}$ is four-times continuously differentiable and}  \ \varphi_{W}(u) = 0 \ \forall |u| > 1,
\end{cases}
\end{equation}
where $\varphi_{W}$ is the Fourier transform of $W$. 
\item[(v)] $h \gtrsim \Delta^{1/2}, \ n^{1/2-\delta} \sqrt{\Delta h} \to \infty$ for some $\delta \in (0,\frac{r}{2r+3})$, and $h^{r}\sqrt{n \Delta h\log n} \to 0$. 
\item[(vi)] Let $\hat{\sigma}^{2}$ be an estimator of $\sigma^{2}$ such that $|\hat{\sigma}^{2} - \sigma^{2} | \cdot \| h^{-1} W(\cdot/h) \|_{I} = o_{\Pr}\{ (n\Delta h \log n)^{-1/2} \}$.
\end{enumerate}
\end{assumption}

Condition (i) is a moment condition and is equivalent to finiteness of the fourth moment of $L_{1}$ (and $L_{t}$ for all $t > 0$; see \cite{Sa99}, Corollary 25.8). Condition (i) excludes, e.g., $\alpha$-stable processes for $\alpha \in (0,2)$, but it allows for $\rho$ not to be integrable (i.e., $\nu (\R) = \infty$ is allowed). 
Condition (ii) is a high-level condition and will be discussed in detail in the next subsection. 
However, at this point, we would like to remark that Condition (ii) is satisfied by a wide class of L\'{e}vy processes. Importantly, Condition (ii) allows the distribution $P_{\Delta}$ to have a discrete component. For example, if $L_{t} = bt + J_{t}$ where $J = (J_{t})_{t \geq 0}$ is a compound Poisson process (with absolutely continuous jump size distribution), then $P_{\Delta}$ has a point mass at $b\Delta$ and $P_{\Delta,b} = P_{\Delta,b} (\{ 0 \}) \delta_{0} + P_{\Delta,b}^{ac}$ where $P_{\Delta,b}^{ac}$ is absolutely continuous. In this case, $P_{\Delta}$ itself is not absolutely continuous, but $y^{4}P_{\Delta,b} = y^{4}P_{\Delta,b}^{ac}$ is absolutely continuous. 

Condition (iii) is concerned with smoothness of the scaled L\'{e}vy density $x^{2}\rho$. Condition (iii) allows the L\'{e}vy density to have a ``cusp'' at the origin. For example, a Gamma process  has L\'{e}vy density $\rho(x) = \alpha x^{-1}e^{-\beta x}1_{(0,\infty)}(x)$ for some $\alpha,\beta > 0$; in this case, the L\'{e}vy density itself $\rho$ is discontinuous (at the origin), but the scaled version $x^{2}\rho$ is globally Lipschitz continuous. 
Condition (iv) is concerned with the kernel function $W$. We assume that $W$ is a $(p+1)$-th order kernel, but allow for the possibility that $\int_{\R} x^{p+1}W(x)dx=0$. We will provide examples of kernel functions satisfying Condition (iv) in Remark \ref{rem: kernel} below.
It is worth mentioning that since the Fourier transform of $W$ has compact support, the support of the kernel function $W$ itself is necessarily unbounded (which is a consequence of the Paley-Wiener theorem; see \cite{StWe71}, Theorem 4.1), and we will use global regularity of the scaled L\'{e}vy density $x^{2}\rho$ to suitably bound the deterministic bias, despite that we focus on constructing confidence bands on a compact set that does not intersect the origin. It could be possible to replace Condition (iii) by a ``local'' smoothness condition on $x^{2}\rho$, but we shall keep current Condition (iii) for the simplicity of the exposition.

Condition (v) is concerned with the bandwidth and the time span $\Delta$. The condition  $h \gtrsim \Delta^{1/2}$ ensures that $\inf_{|u| \leq h^{-1}}  | \varphi_{\Delta} (u)| \gtrsim 1$ (see Lemma \ref{lem: lower bound on chf}). 
The condition  $h^{r} \sqrt{n\Delta h \log n} \to 0$ is an ``undersmoothing'' condition. 
Inspection of the proof of Theorem \ref{thm: Gaussian approximation} shows that, without the condition $h^{r} \sqrt{n\Delta h \log n} \to 0$, we have that
\[
\| \hat{\rho} - \rho \|_{I} = O_{\Pr}\{ (n\Delta h)^{-1/2}\sqrt{\log n} \} + O(h^{r}),
\]
where the  $O(h^{r})$ term comes from the deterministic bias. The right hand side is optimized by choosing $h  \sim \left ( \frac{\log n}{n \Delta} \right )^{-1/(2r+1)}$, and the optimal rate for $\| \hat{\rho} - \rho \|_{I}$ is $\left ( \frac{\log n}{n \Delta} \right )^{-r/(2r+1)}$. 
For our confidence bands to be valid, however, we have to choose bandwidths of smaller order  (by $\log n$ factors) than the optimal one for estimation under the sup-norm, so that the bias term is negligible relative to the ``variance'' or stochastic term. Undersmoothing bandwidths are commonly used in construction of confidence bands. See Section 5.7 in \cite{Wa06} for related discussions.
For example, if we choose $h \sim (n\Delta)^{-1/(2r+1)}(\log n)^{-1}$, then Condition (v) reduces to 
\begin{equation}
n^{(2r+1)\delta/r- 1} (\log n)^{(2r+1)/(2r)} \ll \Delta \lesssim n^{-1/(r+3/2)} (\log n)^{-(2r+1)/(r+3/2)},
\label{eq: condition on Delta}
\end{equation}
where the condition $\delta \in (0,\frac{r}{2r+3})$ ensures that  $(2r+1)\delta/r- 1 < -1/(r+3/2)$.

Condition (vi) is concerned with  the pilot estimator of $\sigma^{2}$. 
Since the set $I$ is away from the origin, if $|W(x)| = O(|x|^{-r-1})$ as $|x| \to \infty$, then $\| h^{-1} W(\cdot/h) \|_{I} = O(h^{r}) = o\{ (n\Delta h \log n)^{-1/2} \}$, so that the pilot estimator $\hat{\sigma}^{2}$ need not be even consistent (e.g. we may take $\hat{\sigma}^{2} = 0$). 
Note that as long as $\int_{\R} |x|^{p+2} |W(x)|dx < \infty$, the Fourier transform $\varphi_{W}$ is $(p+2)$-times continuously differentiable, so that $|W(x)| = o(|x|^{-p-2}) = o(|x|^{-r-1})$ as $|x| \to \infty$ (however, as noted before, in our simulations studies, we found that, when $\sigma > 0$, using a proper estimator for $\hat{\sigma}^{2}$ improves upon the empirical performance of the estimator $\hat{\rho}$ and the confidence bands). 


The following theorem  derives a Gaussian approximation result. Recall that a Gaussian process $\{ Z(x) : x \in I \}$ is a tight random variable in $\ell^{\infty}(I)$ if and only if $I$ is totally bounded for the intrinsic pseudo-metric $d_{Z}(x,y) = \sqrt{\Ep[ (Z(x)-Z(y))^{2} ]}$ for $x,y \in I$, and $Z$ has sample paths almost surely uniformly $d_{Z}$-continuous \citep[cf.][p.41]{vaWe96}. In that case, we say that $Z$ is a tight Gaussian random variable in $\ell^{\infty}(I)$. 

\begin{theorem}[Gaussian approximation]
\label{thm: Gaussian approximation}
Under Assumption \ref{as: assumption 1}, for each sufficiently large $n$, there exists a tight Gaussian random variable $Z_{n}^{G}$ in $\ell^{\infty}(I)$ with mean zero and  covariance function
\begin{align*}
\Cov \left (Z_{n}^{G}(x),Z_{n}^{G}(y) \right ) &= \frac{1}{s_{n}(x)s_{n}(y)} \Bigg \{ \int_{\R} K_{n}((x-w)/h)K_{n}((y-w)/h) w^{4} P_{\Delta}(dw) \\
&\quad - \left ( \int_{\R} K_{n}((x-w)/h) w^{2}P_{\Delta}(dw) \right ) \left ( \int_{\R} K_{n}((y-w)/h) w^{2}P_{\Delta}(dw) \right ) \Bigg \}
\end{align*}
for $x,y \in I$, and such that as $n \to \infty$, 
\[
\sup_{z \in \R}\left | \Pr \{ \| \sqrt{n}\Delta h x^{2}(\hat{\rho} - \rho)/s_{n} \|_{I} \leq z \}- \Pr \{ \| Z_{n}^{G} \|_{I} \leq z \} \right |\to 0. 
\]
\end{theorem}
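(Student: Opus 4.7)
The plan is to reduce $\|\sqrt{n}\Delta h\, x^{2}(\hat{\rho}-\rho)/s_{n}\|_{I}$ to the supremum of an i.i.d.\ centred empirical process and then invoke the high-dimensional Gaussian coupling inequalities of \cite{ChChKa14a,ChChKa14b,ChChKa16}. Starting from the Fourier representation (\ref{eq: Fourier}) of $\rho$ and the definition (\ref{eq: spectral estimator}) of $\hat{\rho}$, I would split $x^{2}(\hat{\rho}(x)-\rho(x))$ into three pieces: (a) a deterministic truncation bias arising from replacing $1$ by $\varphi_{W}(uh)$ in the Fourier inversion; (b) the stochastic term coming from replacing $\varphi_{\Delta}^{(k)}$ by $\hat{\varphi}_{\Delta}^{(k)}$ in the ratio; and (c) the pilot-volatility error $(\sigma^{2}-\hat{\sigma}^{2})h^{-1}W(x/h)$. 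Piece (a) is $O(h^{r})$ uniformly on $I$ by the H\"{o}lder regularity of $x^{2}\rho$ (Condition (iii)) combined with the moment, vanishing, and smoothness properties of $W$ (Condition (iv)); the undersmoothing in (v) makes this $o((n\Delta h\log n)^{-1/2})$, and piece (c) is negligible by Condition (vi).

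For the stochastic term I would linearise the ratio via the algebraic identity
\[
\frac{(\hat{\varphi}'_{\Delta})^{2}-\hat{\varphi}''_{\Delta}\hat{\varphi}_{\Delta}}{\hat{\varphi}_{\Delta}^{2}} - \frac{(\varphi'_{\Delta})^{2}-\varphi''_{\Delta}\varphi_{\Delta}}{\varphi_{\Delta}^{2}} = -\frac{(\hat{\varphi}_{\Delta}-\varphi_{\Delta})''}{\varphi_{\Delta}} + R_{n}(u),
\]
valid on $\{|u|\le 1/h\}$ with probability tending to one because $h\gtrsim\Delta^{1/2}$ keeps both $\varphi_{\Delta}(u)$ and $\hat{\varphi}_{\Delta}(u)$ bounded away from zero there (Lemmas \ref{lem: lower bound on chf} and \ref{lem: uniform convergence}). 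The remainder $R_{n}(u)$ is a polynomial expression in the centred empirical derivatives $(\hat{\varphi}_{\Delta}-\varphi_{\Delta})^{(k)}(u)$, $k=0,1,2$, each of which is $O_{\Pr}(\sqrt{(\log n)/n})$ uniformly on $\{|u|\le 1/h\}$ by standard maximal inequalities for empirical characteristic functions. Fourier-inverting the leading linear term and changing variables $u\mapsto u/h$ collapses it to (\ref{eq: main term}), which by two integrations by parts in $u$ coincides with the centred sum (\ref{eq: intermediate process}); the Fourier inversion of $R_{n}$ contributes $O_{\Pr}((\log n)/(nh)) = o_{\Pr}((n\Delta h)^{-1/2})$ under (v). Collecting these estimates,
\[
\sqrt{n}\Delta h\, x^{2}(\hat{\rho}(x)-\rho(x))/s_{n}(x) = Z_{n}(x) + r_{n}(x), \qquad \|r_{n}\|_{I} = o_{\Pr}(1/\sqrt{\log n}).
\]

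The final step is to apply the Gaussian coupling of \cite{ChChKa16} to the empirical process $Z_{n}$ indexed by $x\in I$, viewed as a normalised sum of i.i.d.\ functions $f_{n,x}(y) = \{y^{2}K_{n}((x-y)/h)-\Ep[Y_{n,1}^{2}K_{n}((x-Y_{n,1})/h)]\}/s_{n}(x)$. The hypotheses to verify are: (a) envelope and $L^{p}$ moment bounds for $f_{n,x}$, which follow from Condition (i) together with $\|K_{n}\|_{\R}\lesssim h^{-1}$ via Fourier inversion; (b) Lipschitz/uniform-entropy bounds for $\{f_{n,x}:x\in I\}$, which follow from the four-times continuous differentiability of $\varphi_{W}$ in (iv); and (c) the variance lower bound $\inf_{x\in I}s_{n}^{2}(x)\gtrsim\Delta h$. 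Condition (ii) is exactly what makes (c) work: shifting by $b\Delta$ and using $g_{\Delta,b}\gtrsim\Delta$ on $I^{\varepsilon_{0}}$ yields $s_{n}^{2}(x)\gtrsim\Delta h\int K_{n}^{2}$, and Parseval combined with $|\varphi_{\Delta}|\lesssim 1$ gives $\int K_{n}^{2}\gtrsim 1$. The coupling then produces a tight centred Gaussian $Z_{n}^{G}$ in $\ell^{\infty}(I)$ with covariance equal to that of $Z_{n}$ (which is precisely the formula in the theorem statement) and a Kolmogorov-distance approximation for $\|Z_{n}\|_{I}$; Gaussian anti-concentration from \cite{ChChKa14b} absorbs the $o_{\Pr}(1/\sqrt{\log n})$ remainder $r_{n}$ into the approximation and delivers the claim. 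The main obstacle is the uniform control of the nonlinear remainder $R_{n}$ over $\{|u|\le 1/h\}$ in a triangular-array regime where $K_{n}$ itself depends on $n$ through $\Delta$ and $\varphi_{\Delta}$; this is precisely why the finite-sample coupling bounds of \cite{ChChKa14a,ChChKa16} are indispensable, as classical Donsker-type arguments do not apply to an $n$-dependent index class of this kind.
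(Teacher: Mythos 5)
Your overall architecture matches the paper's: the same three-way decomposition of $x^{2}(\hat{\rho}-\rho)$, linearization of $\hat{\psi}''$ around $(\hat{\varphi}_{\Delta}-\varphi_{\Delta})''/(\Delta\varphi_{\Delta})$, reduction to an i.i.d.\ empirical process, intermediate Gaussian coupling via \cite{ChChKa16}, and anti-concentration from \cite{ChChKa14b}. However, two of your intermediate claims are quantitatively wrong in ways that would actually break the argument, not merely require repair.

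First, you assert that each $(\hat{\varphi}_{\Delta}-\varphi_{\Delta})^{(k)}$ is $O_{\Pr}(\sqrt{(\log n)/n})$ uniformly on $[-h^{-1},h^{-1}]$. The correct rate (Lemma~\ref{lem: uniform convergence}, via \cite{KaRe10}) is $O_{\Pr}\{n^{-1/2}\Delta^{(k\wedge 1)/2}\log h^{-1}\}$, i.e.\ an extra $\Delta^{1/2}$ factor for $k\geq 1$ because $\Ep[Y_{n,1}^{2k}]\lesssim\Delta$. This factor is not cosmetic: in the linearization remainder the leading term is $(1/\varphi_{\Delta})'(\hat{\varphi}_{\Delta}-\varphi_{\Delta})'$ with $\|(1/\varphi_{\Delta})'\|_{[-h^{-1},h^{-1}]}\lesssim\Delta^{1/2}$, and with your rate this term, after dividing by $\Delta$ and Fourier-inverting, yields a contribution to $x^{2}(\hat{\rho}-\rho)$ of order $h^{-1/2}\log n\cdot(n\Delta h\log n)^{-1/2}$, which diverges relative to the target scale. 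With the correct $\Delta^{1/2}$ factor the term is $O_{\Pr}(n^{-1/2}\Delta\log n)$ before Fourier inversion and the argument of Lemma~\ref{lem: linearization} closes.

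Second, you claim $\|K_{n}\|_{\R}\lesssim h^{-1}$ and propose to get envelope and $L^{p}$ bounds for $f_{n,x}=y^{2}K_{n}((x-y)/h)/s_{n}(x)$ ``from Condition~(i)'' by pairing this with a $y^{2}$-type envelope. This misses the paper's central technical idea. In fact $\|K_{n}\|_{\R}\lesssim 1$, and, more importantly, Lemma~\ref{lem: deconvolution kernel} establishes the decay $\|(1+y^{2}+h^{2}y^{4})(|K_{n}|\vee|K_{n}'|)\|_{\R}\lesssim 1$; combined with the algebraic decomposition $y^{2}=h^{2}\bigl(\frac{x-y}{h}\bigr)^{2}-2xh\bigl(\frac{x-y}{h}\bigr)+x^{2}$ this shows that $y\mapsto y^{2}K_{n}((x-y)/h)$ is \emph{uniformly bounded} in $n$, $y$, and $x\in I$, so the class $\mF_{n}$ admits a \emph{constant} envelope of order $1/\sqrt{\Delta h}$ and a VC-type bound with that envelope (Lemma~\ref{lem: VC type}, proved by a bounded-variation argument, not by Lipschitz estimates on $\varphi_{W}$). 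The paper explicitly notes that using the envelope $y\mapsto\mathrm{const}\cdot y^{2}$, as your plan implicitly does, would demand at least a finite eighth moment of $\rho$ and further restrictions on $r$. Under Condition~(i) alone your route cannot deliver the required $L^{3}/L^{4}$ bounds $\sup_{f\in\mF_{n}}\Ep[|f(Y_{n,1})|^{3}]\lesssim(\Delta h)^{-1/2}$ and $\sup_{f\in\mF_{n}}\Ep[f^{4}(Y_{n,1})]\lesssim(\Delta h)^{-1}$ that the coupling theorem of \cite{ChChKa16} needs.
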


 The distribution of the Gaussian process $Z_{n}^{G}$ that appears in Theorem \ref{thm: Gaussian approximation} changes with $n$, and so the approximation is only an ``intermediate'' one. However, such an intermediate Gaussian approximation is sufficient to prove validity of  bootstraps \citep[cf.][]{ChChKa14b}. Building on Theorem \ref{thm: Gaussian approximation}, the following theorem formally establishes asymptotic validity of the two bootstrap confidence bands. 

\begin{theorem}[Validity of bootstrap confidence bands]
\label{thm: bootstrap}
Under Assumption \ref{as: assumption 1}, for either $B \in \{ MB, EB \}$, we have 
\[
\Pr \{ \rho (x) \in \hat{\mathcal{C}}_{1-\tau}^{B}(x) \ \forall x \in I \} \to 1-\tau
\]
as $n \to \infty$. Furthermore, the supremum width of the band $\hat{\mathcal{C}}_{1-\tau}^{B}$ is $O_{\Pr}\{ (n\Delta h)^{-1/2} \sqrt{\log n} \}$.  
\end{theorem}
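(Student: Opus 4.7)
The plan is to deduce Theorem~\ref{thm: bootstrap} from Theorem~\ref{thm: Gaussian approximation} (Gaussian approximation) plus a bootstrap consistency step showing that the conditional quantiles $\hat{c}_{n}^{B}(1-\tau)$, $B \in \{MB, EB\}$, approximate the Gaussian quantile $c_{n}^{G}(1-\tau)$ in probability. First I would establish the auxiliary uniform approximations
\[
\| \hat{K}_{n} - K_{n} \|_{[-h^{-1},h^{-1}]} = o_{\Pr}(1), \qquad \| \hat{s}_{n}/s_{n} - 1 \|_{I} = o_{\Pr}(1),
\]
which follow from $\inf_{|u|\le h^{-1}}|\hat\varphi_\Delta(u)| \gtrsim 1 - o_\Pr(1)$ (a consequence of $h \gtrsim \Delta^{1/2}$ via the lower-bound and uniform-convergence lemmas alluded to in the text) and empirical process concentration over the function class $\mathcal{F}_{n} = \{ y \mapsto y^{2} \hat K_{n}((x-y)/h) : x \in I\}$. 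These classes will already have been shown to be VC-subgraph type with a controllable envelope in the proof of Theorem~\ref{thm: Gaussian approximation}, so the same machinery applies here.

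Next I would establish bootstrap Kolmogorov consistency, i.e.
\[
\sup_{z \in \R}\bigl| \Pr\{ \|\hat Z_{n}^{B}\|_{I} \le z \mid \mathcal D_{n}\} - \Pr\{\|Z_{n}^{G}\|_{I} \le z\}\bigr| \xrightarrow{\Pr} 0.
\]
For the Gaussian multiplier bootstrap, $\hat Z_{n}^{MB}$ is conditionally Gaussian, so this reduces (via the Gaussian comparison lemma of Chernozhukov--Chetverikov--Kato, e.g.\ Lemma 3.1 of \cite{ChChKa14a}) to controlling the uniform difference of covariance functions:
\[
\sup_{x,y \in I}\bigl| \widehat{\Cov}_{MB}(x,y) - \Cov(Z_{n}^{G}(x), Z_{n}^{G}(y))\bigr| = o_{\Pr}(1/\log n),
\]
which I would obtain by combining the uniform bounds on $\hat K_{n} - K_{n}$ and $\hat s_{n} - s_{n}$ with empirical process concentration for the second-moment functional. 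For the empirical bootstrap I would apply the high-dimensional bootstrap CLT of \cite{ChChKa16} for VC-type classes to the rescaled class $\mathcal{F}_{n}/\hat{s}_{n}$; the hypotheses (envelope, entropy, variance) are the bootstrap analogues of those verified for Theorem~\ref{thm: Gaussian approximation}. Combined with the Gaussian anti-concentration inequality for $\|Z_{n}^{G}\|_{I}$ \citep{ChChKa14a}, this yields $\hat c_{n}^{B}(1-\tau) \xrightarrow{\Pr} c_{n}^{G}(1-\tau)$.

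To conclude coverage, write
\[
\Pr\{\rho \in \hat{\mathcal C}_{1-\tau}^{B}\} = \Pr\bigl\{ \| \sqrt{n}\Delta h x^{2}(\hat\rho - \rho)/\hat s_{n}\|_{I} \le \hat c_{n}^{B}(1-\tau)\bigr\},
\]
replace $\hat s_{n}$ with $s_{n}$ at the cost of $o_{\Pr}(1)$ using $\|\hat s_{n}/s_{n} - 1\|_{I} = o_{\Pr}(1)$ together with $\|\sqrt{n}\Delta h x^{2}(\hat\rho - \rho)/s_{n}\|_{I} = O_{\Pr}(\sqrt{\log n})$, invoke Theorem~\ref{thm: Gaussian approximation} and the quantile consistency above, then use anti-concentration of $\|Z_{n}^{G}\|_{I}$ to see that the probability converges to $\Pr\{\|Z_{n}^{G}\|_{I} \le c_{n}^{G}(1-\tau)\} = 1-\tau$. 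For the width, since $I$ is compact and $0 \notin I$, $\sup_{x \in I}x^{-2}$ is bounded; combined with $\sup_{x \in I}\hat s_{n}(x) = O_{\Pr}(\sqrt{\Delta h})$ (from $s_{n}(x)^{2} \lesssim \Delta h$ heuristic made rigorous in Theorem~\ref{thm: Gaussian approximation}) and $\hat c_{n}^{B}(1-\tau) = O_{\Pr}(\sqrt{\log n})$ (Dudley/Borell--TIS bounds for Gaussian suprema), the width is $O_{\Pr}(\sqrt{\log n}/\sqrt{n\Delta h})$.

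The main obstacle will be establishing the bootstrap covariance approximation at the required rate $o_{\Pr}(1/\log n)$, since the randomized kernel $\hat K_{n}$ has $\hat\varphi_{\Delta}$ in the denominator and its stability hinges delicately on the uniform lower bound on $|\hat\varphi_{\Delta}|$ over $[-h^{-1},h^{-1}]$ under the high-frequency regime $h \gtrsim \Delta^{1/2}$ and Condition (ii) of Assumption~\ref{as: assumption 1}. Verifying the empirical bootstrap hypotheses of \cite{ChChKa16} in this non-i.i.d.-across-$n$ setting, with envelope and complexity depending on $h_{n}$, is the most delicate bookkeeping and is where the interplay between $n$ and $\Delta$ from Condition (v) will be used throughout.
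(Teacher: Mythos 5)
Your overall architecture matches the paper's: use Theorem~\ref{thm: Gaussian approximation} plus a bootstrap consistency step and anti-concentration to conclude coverage, and bound the width by Borell--TIS. However, there is a genuine gap in the rate bookkeeping around $\hat{s}_n/s_n$.

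You state $\|\hat{s}_n/s_n - 1\|_I = o_\Pr(1)$ as the auxiliary uniform approximation, and then in the conclusion replace $\hat{s}_n$ by $s_n$ ``at the cost of $o_\Pr(1)$'' using this together with $\|\sqrt{n}\Delta h\,x^2(\hat\rho - \rho)/s_n\|_I = O_\Pr(\sqrt{\log n})$. That product is only $o_\Pr(\sqrt{\log n})$, which is not small enough: after feeding the perturbation through anti-concentration (which costs a factor $\E[\|Z_n^G\|_I] \lesssim \sqrt{\log n}$), you would obtain a coverage error of size $o_\Pr(\log n)$, which diverges. To close the argument you need $\|\hat{s}_n/s_n - 1\|_I = o_\Pr\{(\log n)^{-1}\}$, so that the perturbation to $\|\sqrt{n}\Delta h\,x^2(\hat\rho - \rho)/\hat{s}_n\|_I$ is $o_\Pr\{(\log n)^{-1/2}\}$ and anti-concentration then gives a vanishing coverage error. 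This sharpened rate is precisely what the paper's Lemma A.10 establishes (along with the analogous $o_\Pr(n^{-1/2}\log n)$ control on $\hat K_n - K_n$), and it requires an empirical-process maximal inequality for the second-moment class $\mathcal F_n^2$ under Condition (v), not just consistency. You flag the need for the rate $o_\Pr(1/\log n)$ in the covariance-matching step, but you don't propagate that requirement to the variance ratio, which is where the argument actually fails to close as written.

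A secondary remark on the multiplier-bootstrap step: you propose to exploit conditional Gaussianity of $\hat Z_n^{MB}$ and apply a Gaussian comparison lemma to the covariance functions. The paper instead couples $\|\mathbb{G}_n^\xi\|_{\mathcal F_n}$ directly to a copy of $\|Z_n^G\|_I$ via Theorem~2.2 of \cite{ChChKa16} (after first replacing $\hat K_n$ by $K_n$ in the multiplier process at cost $o_\Pr(\sqrt{n\Delta h/\log n})$). Both routes are viable; the coupling route avoids having to verify a uniform covariance-matching rate over $I\times I$ for the infinite-dimensional class and side-steps the delicate treatment of the random kernel $\hat K_n$ inside the Gaussian comparison, so the paper's choice is somewhat cleaner here. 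For the empirical bootstrap you correctly point to the same Chernozhukov--Chetverikov--Kato machinery (the paper uses Theorem~2.3 of \cite{ChChKa16}).
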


\begin{remark}
For example, if we choose $h \sim (n\Delta)^{-1/(2r+1)}(\log n)^{-1}$, then provided that (\ref{eq: condition on Delta}) is satisfied, the supremum width of the band $\hat{\mathcal{C}}_{1-\tau}^{B}$ is $O_{\Pr}\{ (n\Delta)^{-r/(2r+1)} \log n \}$. 
\end{remark}

The proofs of Theorems \ref{thm: Gaussian approximation} and \ref{thm: bootstrap} build on non-trivial applications of the intermediate Gaussian and bootstrap approximation theorems developed in \cite{ChChKa14a,ChChKa14b,ChChKa16}. 
We would like to point out here that there are several non-trivial steps in proving Theorems \ref{thm: Gaussian approximation} and \ref{thm: bootstrap}. For example, we will require to show that $\inf_{x \in I} s_{n}^{2}(x) \gtrsim \Delta h$, but since the increment distribution $P_{\Delta}$ may have a discrete component and degenerates to $\delta_{0}$ as $\Delta \to 0$, and $K_{n}$ changes with $n$ and has unbounded support, lower bounding the variance function $s_{n}^{2}(x)$ is non-trivial. Second, a crucial fact in the proofs of Theorems \ref{thm: Gaussian approximation} and \ref{thm: bootstrap} is that the function class
\begin{equation}
\left \{ y \mapsto y^{2}K_{n}((x-y)/h) : x \in I \right \} 
\label{eq: function class}
\end{equation}
is a Vapnik-Chervonenkis (VC) type class. In view of Lemma 1 in \cite{KaSa16}, it is not difficult to verify that the function class (\ref{eq: function class}) is VC type for an envelope function of the form $y \mapsto \text{const}.\times y^{2}$; however, using this envelope function will require more restrictive moment conditions on $\rho$ (we will require at least finite eighth moment of $\rho$) and additional conditions on the  smoothness level $r$ depending on the moments conditions on $\rho$. In fact, although it is not apparent, it turns out that, under our assumption, the function $y \mapsto y^{2}K_{n}((x-y)/h)$ is bounded uniformly in $n$ and $x \in I$. So, we will verify that the function class (\ref{eq: function class}) is VC type for a constant envelope function, which requires a different and non-trivial idea; cf. Lemma \ref{lem: VC type}. 


\begin{remark}[Examples of kernel functions]
\label{rem: kernel}
Construction of a kernel function satisfying Condition (iv) is typically done by specifying its Fourier transform $\varphi_{W}$. 
Let $\varpi: \R \to \R$ be a function that is even (i.e., $\varpi (u) = \varpi (-u)$), supported in $[-1,1]$, and $(4 \vee (p+3))$-times continuously differentiable, and such that
\[
\varpi^{(\ell)} (0)
=
\begin{cases}
1 & \ell=0 \\
0 & \ell=1,\dots,p
\end{cases}
.
\]
Then the function $W(x) :=  \frac{1}{2\pi}\int_{\R} e^{-iux} \varpi(u) du$ is real-valued, $|W(x)| = o(|x|^{-p-3})$ as $|x| \to \infty$ (which follows from changes of variables), so that $(1 \vee |x|^{p+1})W$ is integrable, 
and 
\[
\int_{\R}x^{\ell}W(x) dx = i^{-\ell}\varpi^{(\ell)}(0) = 
\begin{cases}
1 & \ell=0 \\
0 & \ell=1,\dots,p
\end{cases}
.
\]
Here, since $W$ is even, if $p$ is even, we also have $\int_{\R} x^{p+1} W(x) dx = 0$. Examples of $\varpi$ include: $\varpi (u) = (1-u^{2})^{k}1_{[-1,1]}(u)$ for $k \geq 5 \vee (p+4)$, and 
\begin{equation}
\varpi(u) =
\begin{cases}
1 &\text{if } \vert u \vert \leq c\\
\exp\left\{ \frac{-b \exp(-b/(\vert u \vert - c)^2)}{(\vert u \vert - 1)^2} \right\} &\text{if } c < \vert u \vert < 1\\
0 &\text{if } 1 \leq \vert u \vert 
\end{cases}
\label{eq: flap-top}
\end{equation}
for $0 < c < 1$ and $b > 0$. For the latter case, $\varpi$ is infinitely differentiable with $\varpi^{(\ell)}(0) = 0$ for all $\ell \geq 1$, so that its inverse Fourier transform $W$, called a flap-top kernel, is of infinite order, i.e., $\int_{\R} x^{\ell} W(x) dx=0$ for all integers $\ell \geq 1$ \citep[cf.][]{McPo04}. 
\end{remark}

\subsection{Discussions on Condition (ii) in Assumption \ref{as: assumption 1}}
\label{sec: condition (ii)}

In this subsection, we provide primitive regularity conditions that guarantee Condition (ii) in Assumption \ref{as: assumption 1}. We make the following assumption. 

\begin{assumption}
\label{as: assumption 2}
Assume that $x^{2}\rho$ is continuous; $x^{4} \rho \in \ell^{\infty}(\R)$; and $\rho$ is positive on $I^{\varepsilon_{1}}$ for some sufficiently small $\varepsilon_{1} > 0$ such that $0 \notin I^{\varepsilon_{1}}$. Furthermore, there exists $b \in \R$ such that the signed measure $y P_{\Delta,b}(dy)$ has a Lebesgue density bounded (in absolute value) by $C \log (1/\Delta)$ for all sufficiently small  $\Delta > 0$ for some constant $C > 0$. 
\end{assumption}

 Assumption 4.2 ensures Condition (ii) in Assumption \ref{as: assumption 1} to hold. 

\begin{proposition}
\label{prop: condition (ii)}
Condition (ii) in Assumption \ref{as: assumption 1} is satisfied  under Assumption \ref{as: assumption 2}. 
\end{proposition}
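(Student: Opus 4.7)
The strategy is to combine the assumed density of $yP_{\Delta,b}(dy)$ with a L\'evy--It\^{o} ``single large jump'' expansion of $P_{\Delta,b}$ that exposes the leading $\Delta\rho(y)$ behavior of its absolutely continuous density away from the origin.

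First, since $yP_{\Delta,b}(dy)$ is absolutely continuous with density $f_{\Delta,b}$, $P_{\Delta,b}$ can have no singular continuous part and its atoms can lie only at $0$; write $P_{\Delta,b}=c_\Delta\delta_0+p_\Delta(y)\,dy$ with $yp_\Delta(y)=f_{\Delta,b}(y)$ a.e., so that $y^4P_{\Delta,b}(dy)$ is absolutely continuous with density $g_{\Delta,b}(y)=y^4p_\Delta(y)=y^3f_{\Delta,b}(y)$. Then fix $\varepsilon\in(0,\varepsilon_1/2)$ small (in particular $\varepsilon<d(0,I)-\varepsilon_1$), set $\varepsilon_0:=\varepsilon_1-\varepsilon$, and use the L\'evy--It\^{o} decomposition $L_t-bt=X_t+N_t$, where $N$ is an independent compound Poisson process with intensity $\lambda=\int_{|x|>\varepsilon}\rho(x)\,dx$ and one--jump density $q(z)=\rho(z)\mathbf{1}(|z|>\varepsilon)/\lambda$, and $X$ carries the drift, Brownian part, and (compensated) small jumps. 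Expanding $\mu_N=e^{-\lambda\Delta}\sum_{k\ge 0}(\lambda\Delta)^k q^{*k}/k!$ and convolving with $\mu_X$ gives
\[
p_\Delta(y)=e^{-\lambda\Delta}\mu_X^{ac}(y)+e^{-\lambda\Delta}\lambda\Delta\,(\mu_X*q)(y)+r_\Delta(y),
\]
where $r_\Delta$ collects contributions from two or more large jumps.

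The lower bound on $I^{\varepsilon_0}$ is then transparent: for $y\in I^{\varepsilon_0}$ and $|x|\le\varepsilon$ one has $y-x\in I^{\varepsilon_1}$ with $|y-x|>\varepsilon$, so positivity and continuity of $\rho$ on $I^{\varepsilon_1}$ together with $\mu_X([-\varepsilon,\varepsilon])\to 1$ (from $\Var(X_\Delta)\lesssim\Delta$) force $(\mu_X*q)(y)\gtrsim 1$ uniformly, whence $p_\Delta(y)\gtrsim\Delta$ and $g_{\Delta,b}(y)\gtrsim\Delta$ on $I^{\varepsilon_0}$.

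The upper bound is the harder part. Applying the elementary inequality $y^4\le 8[(y-x)^4+x^4]$ together with the boundedness of $w^4\rho$ and the moment bound $\Ep[X_\Delta^4]=O(\Delta)$ yields $y^4(\mu_X*q)(y)=O(1)$ uniformly in~$y$, so the one--jump piece contributes $O(\Delta)$; iterating the same inequality and using the factorial decay $(\lambda\Delta)^k/k!$ gives $\|y^4 r_\Delta\|_\infty=O(\Delta^2)$. The main obstacle is the remaining piece $e^{-\lambda\Delta}y^4\mu_X^{ac}(y)$: neither the $y^4$ vanishing at $0$ nor the assumption $\|f_{\Delta,b}\|_\R\le C\log(1/\Delta)$ alone is sufficient, and I would handle it by splitting at $|y|\sim\sqrt\Delta$, controlling the small--$|y|$ region by the $y^4$ factor combined with the $O(\Delta^{-1/2})$--scale concentration of the ``continuous'' part of $\mu_X$ (so that the contribution is $O(\Delta^{3/2})$), and controlling the large--$|y|$ region by solving $y e^{-\lambda\Delta}\mu_X^{ac}(y)=f_{\Delta,b}(y)-y\cdot[\text{other controlled pieces}]$ and invoking $|f_{\Delta,b}|\le C\log(1/\Delta)$ together with the $|y|^3$ prefactor in $g_{\Delta,b}=y^3f_{\Delta,b}$; careful balancing then gives $\|g_{\Delta,b}\|_\R\lesssim\Delta$.
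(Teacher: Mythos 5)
Your lower--bound argument (showing $\inf_{y\in I^{\varepsilon_0}}g_{\Delta,b}(y)\gtrsim\Delta$ via one large jump combined with the concentration of the small--jump part near the origin) is sound and is in spirit the same as the paper's Step~2, which also isolates a compound Poisson piece and uses $Q_\Delta([-\varepsilon,\varepsilon])=1-O(\Delta)$. Your observation that $P_{\Delta,b}$ can have an atom only at $0$ and that $g_{\Delta,b}(y)=y^3 f_{\Delta,b}(y)$ is also correct and is a nice way to expose the structure.

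The upper bound, however, has genuine gaps. You take a fundamentally different route from the paper: the paper never decomposes $L$ into small and big jumps. Instead it differentiates $\varphi_\Delta=\varphi_{\Delta/2}^2$ to get the identity $y^2P_\Delta=\Delta\,\nu_\sigma*P_\Delta+4(yP_{\Delta/2})*(yP_{\Delta/2})$, applies the rule $y(P*Q)=(yP)*Q+P*(yQ)$ twice to reach a $y^4$ identity, and then bounds each term by $L^\infty$--$L^1$ Young--type estimates using only $\|y^k\nu\|_\R<\infty$, $\|yP_\Delta\|_\R\lesssim\log(1/\Delta)$, and $\Ep[|L_\Delta|^k]$ moment bounds. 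Your L\'evy--It\^o decomposition, by contrast, requires pointwise control of the density $\mu_X^{ac}$ of the small--jump part, and this is where the plan fails. First, the claimed ``$O(\Delta^{-1/2})$--scale concentration'' of $\mu_X^{ac}$ is false in general: it holds when $\sigma>0$, but for pure--jump infinite--activity processes the density need not even be bounded (e.g.\ for a Gamma process the density at $0$ is unbounded; for NIG--type processes it scales like $\Delta^{-1}$). Second, and more decisively, the large--$|y|$ step is misdirected: the inequality $|g_{\Delta,b}(y)|=|y^3 f_{\Delta,b}(y)|\le C|y|^3\log(1/\Delta)$ is only useful \emph{near} the origin -- it yields $O(\Delta)$ only for $|y|\lesssim(\Delta/\log(1/\Delta))^{1/3}$ and blows up as $\Delta\to0$ for any fixed $|y|$ of order one. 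To control $y^4\mu_X^{ac}(y)$ on $|y|\gtrsim1$ you would need to show that the density of $X_\Delta$ at unit distance from the origin is $O(\Delta)$, a statement about small--jump concentration that your proposal never addresses and that is nontrivial for general $\mu_X$. No amount of ``careful balancing'' of the two bounds you propose closes this middle range, because neither bound is $O(\Delta)$ there. The paper's convolution--identity route sidesteps the issue entirely; if you want to keep the jump--decomposition route, you must supply a genuine pointwise decay estimate for $\mu_X^{ac}$ away from $0$.
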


\begin{remark}
\label{rem: condition (ii)}
The first part of Assumption \ref{as: assumption 2} is not restrictive. Recall that we are assuming finiteness of the fourth moment of $\rho$  in Assumption \ref{as: assumption 1}, so that the requirement that $x^{4} \rho \in \ell^{\infty}(\R)$ appears to be innocuous. 
Proposition 16 in \cite{NiReSoTr16} provides primitive regularity conditions that ensure that $yP_{\Delta}$ has a density bounded uniformly in $\Delta$; see Assumption 15 in \cite{NiReSoTr16} (we allow for the density of $y P_{\Delta,b} (dy)$ to grow like $\log (1/\Delta)$ to cover cases where $\sigma=0$ and the L\'{e}vy density behaves like $x^{-2}$ near the origin; see below). 
In particular, Assumption 15 in \cite{NiReSoTr16} covers many of basic examples of L\'{e}vy processes.

For example, consider the following two simple cases:
\begin{quote}
(a) $\sigma > 0$; \ or \ (b) $\sigma=0$ and $\| x \rho \|_{\R} < \infty$.
\end{quote}
 ($\| x \rho \|_{\R} < \infty$ together with the assumption that $\int_{\R} x^{2} \rho (x) dx < \infty$ ensure that $\int_{\R} |x|\rho(x) < \infty$.) 

For Case (a),  in view of $\varphi'_{\Delta}(u) = i \int_{\R} e^{iuy} yP_{\Delta}(dy)$ together with the fact that $\varphi_{\Delta}'$ is integrable, a Lebesgue density of $yP_{\Delta}$ exists and is given by 
\[
\frac{1}{2\pi} \int_{\R} e^{-iuy} i^{-1} \varphi_{\Delta}'(u) du.
\]
Since $|\varphi_{\Delta}'(u)| \leq \text{const.}\, \Delta (1+|u|) e^{-\Delta \sigma^{2}u^{2}/2}$, we see that 
\[
\int_{\R} |\varphi_{\Delta}'(u)| du \lesssim \Delta \int_{\R} (1+|u|) e^{-\Delta \sigma^{2}u^{2}/2} du  \lesssim 1,
\]
which shows that the density of $yP_{\Delta}$ is bounded uniformly in $\Delta$. 

For Case (b), observe that $\psi (u) = iub + \int_{\R} (e^{iux} - 1) \rho(x) dx$ with $b = \gamma_{c} - \int_{\R} x\rho(x) dx$, and 
\[
\int_{\R} e^{iuy} y P_{\Delta,b}(dy)= i^{-1}(\Ep[ e^{iu (L_{\Delta} - b\Delta)} ])' = \Delta\left  (\int_{\R} e^{iuy} y\rho(y) dy \right ) \Ep[ e^{iu (L_{\Delta} - b\Delta)} ].
\]
Applying the Fourier inversion, we see that $yP_{\Delta,b}$ has a Lebesgue density
\[
\Delta \int_{\R} (y-w)\rho (y-w) P_{\Delta,b}(dw),
\]
which is bounded (in absolute value) by $\Delta \| x \rho \|_{\R} P_{\Delta,b}(\R) \lesssim \Delta$. 
Hence, in these two cases, $yP_{\Delta,b}$ has a Lebesgue density bounded uniformly in $\Delta$ for some $b \in \R$. For other more complicated cases, we refer to Proposition 16 in \cite{NiReSoTr16}. 
\end{remark}

For the symmetric tempered stable process with stability index $\alpha=1$ and the Normal Inverse Gaussian process discussed in the next section, whose L\'{e}vy densities behave like $x^{-2}$ near the origin, Proposition 16 in \cite{NiReSoTr16} appears not to be  directly applicable. 
To cover those cases, we present the following lemma. 
\begin{lemma}
\label{lem: infinite variation}
Suppose that $\sigma=0$ and the L\'{e}vy density $\rho$ satisfies that for some constants $C > c > 0$,
\[
c \leq \frac{1}{\varepsilon} \int_{|x| \leq \varepsilon} x^{2}\rho(x) dx \leq C \quad \text{and} \quad  \int_{|x| > \varepsilon} |x| \rho(x) dx \leq C  \left ( 1+\log (1/\varepsilon) \right )
\]
 for all $\varepsilon \in (0,1)$. 
Then the signed measure $y P_{\Delta}(dy)$ has a Lebesgue density bounded (in absolute value) by $C' \log (1/\Delta)$ for all 
sufficiently small  $\Delta > 0$ for some constant $C' > 0$. 
\end{lemma}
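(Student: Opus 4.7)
The plan is to invoke Fourier inversion: since $\int_{\R} e^{iuy} y P_\Delta(dy) = -i\varphi_\Delta'(u) = -i\Delta\psi'(u)\varphi_\Delta(u)$ and the tail hypothesis (with $\varepsilon=1$) yields $\int_{|x|>1}|x|\rho(x)dx < \infty$, the signed measure $yP_\Delta$ is finite and $u \mapsto -i\varphi_\Delta'(u)$ is its Fourier transform. It therefore suffices to prove that $\int_{\R} |\varphi_\Delta'(u)|\,du \lesssim \log(1/\Delta)$ for all sufficiently small $\Delta$, for then the density exists and is bounded in absolute value by $(2\pi)^{-1}\int_{\R} |\varphi_\Delta'(u)|\,du$.

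First I would bound $|\varphi_\Delta(u)|$ from above. Because $\sigma = 0$, $\mathrm{Re}\,\psi(u) = -\int_{\R}(1-\cos(ux))\rho(x)\,dx$; using $1-\cos\theta \geq c_0\theta^2$ for $|\theta|\leq 1$ together with the lower-bound hypothesis at $\varepsilon = 1/|u|$ gives
\[
-\mathrm{Re}\,\psi(u) \;\geq\; c_0 u^2 \int_{|x|\leq 1/|u|} x^2\rho(x)\,dx \;\geq\; c_0 c |u| \qquad \text{for } |u|\geq 1,
\]
whence $|\varphi_\Delta(u)| \leq e^{-c_0 c \Delta |u|}$.

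Next I would bound $|\psi'(u)|$ by a logarithm. Finiteness of $\int_{\R} x^2\rho(x)\,dx$ (Assumption \ref{as: assumption 1}(i)) permits differentiating under the integral to give $\psi'(u) = i\gamma_c + i\int_{\R} x(e^{iux}-1)\rho(x)\,dx$. Splitting the integral at $|x| = 1/|u|$ (for $|u|\geq 1$): on $\{|x|\leq 1/|u|\}$ I use $|x(e^{iux}-1)|\leq |u|x^2$ together with the upper-bound hypothesis to bound the contribution by $C$; on $\{|x|>1/|u|\}$ I use $|x(e^{iux}-1)|\leq 2|x|$ together with the tail hypothesis to bound the contribution by $2C(1+\log|u|)$. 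Hence $|\psi'(u)| \lesssim 1+\log(1\vee|u|)$ uniformly in $u$.

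Combining the two bounds, $|\varphi_\Delta'(u)| = \Delta|\psi'(u)||\varphi_\Delta(u)| \lesssim \Delta(1+\log(1\vee|u|))e^{-c_0 c\Delta|u|}$ for $|u|\geq 1$, and $|\varphi_\Delta'(u)| \lesssim \Delta$ for $|u|\leq 1$. Substituting $v = c_0 c \Delta u$ in the tail integral produces
\[
\int_{\R}|\varphi_\Delta'(u)|\,du \;\lesssim\; \Delta + \int_{\R}\bigl(1+\log(1\vee|v|) + \log(1/\Delta)\bigr)e^{-|v|}\,dv \;\lesssim\; \log(1/\Delta)
\]
for all sufficiently small $\Delta$, which is the required bound. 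I expect the main obstacle to be the logarithmic estimate on $|\psi'(u)|$, which depends crucially on the precise logarithmic tail condition in the hypothesis: any polynomial growth $|u|^\alpha$ with $\alpha > 0$ would, after the rescaling $v = \Delta u$, contribute a factor $\Delta^{-\alpha}$ rather than $\log(1/\Delta)$, so the compatibility between the near-zero scaling $\int_{|x|\leq\varepsilon}x^2\rho\asymp \varepsilon$ and the logarithmic tail on $|x|\rho$ is essential to make both ends of the estimate match.
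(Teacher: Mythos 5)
Your proof is correct and follows essentially the same route as the paper: reduce to $\int_{\R}|\varphi_\Delta'(u)|\,du$ via Fourier inversion, bound $|\psi'(u)|\lesssim 1+\log(1\vee|u|)$ by splitting $|x(e^{iux}-1)|\leq |u|x^2\wedge 2|x|$ at $|x|=1/|u|$, bound $|\varphi_\Delta(u)|\leq e^{-c''\Delta|u|}$ for $|u|\geq 1$ via $1-\cos\theta\geq c_0\theta^2$ together with the lower-bound hypothesis, and integrate with the change of variable $v=c''\Delta u$. The only cosmetic difference is that the paper also cites Sato's Theorem 27.7 to note that $P_\Delta$ itself is absolutely continuous, which is not actually needed for the conclusion about $yP_\Delta$; your direct Fourier-inversion argument is sufficient.
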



\section{Examples}
\label{sec: examples}

In this section, we provide some examples of L\'{e}vy processes that satisfy Conditions (i)-(iii) in Assumption \ref{as: assumption 1}. For detailed properties of L\'{e}vy processes discussed below, we refer to \cite{CoTa04}. 
The first four examples are purely non-Gaussian L\'{e}vy processes (i.e., $\sigma =0$), and we allow them to have drift terms. 


\begin{example}[\textbf{Compound Poisson process}] 
\label{ex: compound poisson}
A compound Poisson process with drift is a stochastic process of the form  $L_{t} = bt + \sum_{k=1}^{N_{t}}X_{k}$, where $N=(N_{t})_{t \geq 0}$ is a Poisson process with constant intensity $\lambda>0$ and $\{ X_{k} \}_{k=1}^{\infty}$ is a sequence of i.i.d. random variables with common distribution $F_{X}$ (jump size distribution) independent of  $N$. We assume that $F_{X}$ has Lebesgue density $f_{X}$.  In this case, the characteristic exponent is $\psi (u)= iub + \int_{\R} (e^{iux}-1) \lambda f_{X}(x)dx$, and so the L\'{e}vy density is $\rho = \lambda f_{X}$. Conditions (i) and (iii) can be directly translated to conditions on $f_{X}$. From Proposition \ref{prop: condition (ii)} and Remark \ref{rem: condition (ii)},  Condition (ii) is satisfied if $x^{2}f_{X}$ is continuous, $(|x| \vee x^{4})f_{X} \in \ell^{\infty}(\R)$, and $f_{X}$ is positive on $I^{\varepsilon_{1}}$ for some $\varepsilon_{1} > 0$ such that $0 \not \in I^{\varepsilon_{1}}$.

For example, the jump-part of the Merton model \citep{Me76} is a compound Poisson process with jump size distribution $N(c,v^{2})$ for some $c \in \R$ and $v^{2} > 0$, for which it is not difficult to verify that Conditions (i)-(iii) are satisfied with arbitrary large $r > 0$, and any compact set $I$ in $\R \setminus \{ 0 \}$. 
The jump-part of the Kou model \citep{Ko02} is a compound Poisson process with  jump size density  
\[
f_{X}(x) = p \lambda_{+} e^{-\lambda_{+} x}1_{(0,\infty)}(x) + (1-p) \lambda_{-} e^{-\lambda_{-} |x|}1_{(-\infty,0)}(x)
\]
for some $p \in [0,1]$ and $\lambda_{+}, \lambda_{-} > 0$.
Let  $I$  be any compact set in $\R \setminus \{ 0 \}$,  $(0,\infty)$, and $(-\infty,0)$ if $0 < p < 1, p=1$, and $p=0$, respectively. Then,   it is not difficult to verify that Conditions (i)-(iii) are satisfied with $r=3$ if $p=1/2$ and $\lambda_{+}=\lambda_{-}$, and $r=2$ otherwise. 

A compound Poisson process is a process of finite activity, i.e., has only finitely many jumps on any bounded time interval. 
\end{example}

%

\begin{example}[\textbf{Tempered stable process}]
\label{ex: tempered stable}
A tempered stable process with index $0 \leq \alpha < 2$ is a  L\'evy process with L\'evy density
\[
\rho(x) = {c_{+} \over x^{1+\alpha}}e^{-\lambda_{+}x}1_{(0,\infty)}(x) +{c_{-} \over |x|^{1+\alpha}}e^{-\lambda_{-}|x|}1_{(-\infty,0)}(x),
\]
where $c_{+}, c_{-} \geq 0, c_{+} + c_{-} > 0$, and $\lambda_{+}, \lambda_{-}>0$.  We assume that the stability index is restricted to $0 \leq \alpha < 1$.
Let $I$ be any compact set in $\R \setminus \{ 0 \}$, $(0,\infty)$, and $(-\infty,0)$ if $c_{+} c_{-} \neq 0$, $c_{-} = 0$, and $c_{+}=0$, respectively.
It is clear that Conditions (i) and (iii) are satisfied with $r=2$ if $\alpha=0, c_{+}=c_{-}$, and $\lambda_{+}=\lambda_{-}$, and $r=1-\alpha$ otherwise. 
For example, to see that $x^{2} \rho$ is $(1-\alpha)$-H\"{o}lder continuous in the latter case, it suffices to verify that $x^{1-\alpha} e^{-x}$ is $(1-\alpha)$-H\"{o}lder continuous on $[0,\infty)$, which can be verified as follows: for any $y > x \geq 0$, 
\[
|x^{1-\alpha} e^{-x} - y^{1-\alpha}e^{-y}| \leq \underbrace{x^{1-\alpha} e^{-x}}_{\text{bounded}} (1-e^{-(y-x)}) + \underbrace{e^{-y}}_{\leq 1} \underbrace{(y^{1-\alpha} - x^{1-\alpha})}_{\leq (y-x)^{1-\alpha}} \leq \text{const}. \times (y-x)^{1-\alpha},
\]
where we have used that $1-e^{-(y-x)} \leq (1-e^{-(y-x)})^{1-\alpha}   \leq (y-x)^{1-\alpha}$.

If $\alpha = 0$, then since $x\rho$ is bounded, in view of Remark \ref{rem: condition (ii)}, $y P_{\Delta,b}$ with $b=\gamma_{c} - \int_{\R } x \rho (x) dx$ has a Lebesgue density bounded uniformly in $\Delta$. 
If $0 <\alpha < 1$, then Assumption 15 Case (iii) in \cite{NiReSoTr16} is satisfied for $L_{t} - bt$ with $b=\gamma_{c} - \int_{\R} x\rho (x) dx$, and hence Proposition 16 in \cite{NiReSoTr16} yields that $yP_{\Delta,b}$ has a Lebesgue density bounded uniformly in $\Delta$. Therefore, by Proposition \ref{prop: condition (ii)}, Condition (ii) is satisfied in either case.

The tempered stable process includes Gamma, Inverse Gaussian, and Variance Gamma processes as  special cases. A Gamma process corresponds to the case with $\alpha=0$ and $c_{-}=0$; an Inverse Gaussian process corresponds to the case with $\alpha=1/2$ and $c_{-}=0$; and a Variance Gamma process corresponds to the case with $\alpha=0$ and $c_{+}=c_{-} > 0$. 

The tempered stable process is a process of infinite activity, i.e., has infinitely many jumps on any bounded time interval. 
\end{example}

The L\'{e}vy density in each of Examples of  \ref{ex: compound poisson} and \ref{ex: tempered stable} has finite first moment, and therefore sample paths of the process have finite variation on any bounded time interval \cite[][Theorem 21.9]{Sa99}. On the other hand, the following two examples have infinite variation on any bounded time interval. 

\begin{example}[\textbf{Symmetric tempered stable process with $\alpha=1$}]
In the previous example, consider the case where $\alpha=1, c_{+}=c_{-} = c > 0$, and $\lambda_{+} = \lambda_{-} = \lambda > 0$, i.e., 
\[
\rho(x) = \frac{c}{x^{2}} e^{-\lambda |x|}, \ x \neq 0.
\]
In this case, Condition (i) is trivially  satisfied, and since $x^{2} \rho(x)$ extends to a Lipschitz continuous function on $\R$ as $x^{2} \rho(x) = c e^{-\lambda |x|}, x \in \R$, Condition (iii) is satisfied with $r = 1$. It is not difficult to verify that the assumption in Lemma \ref{lem: infinite variation} is satisfied, and 
therefore, by Proposition \ref{prop: condition (ii)}, Condition (ii) is satisfied for any compact set $I$ in $\R \setminus \{ 0 \}$. 
\end{example}

\begin{example}[\textbf{Normal Inverse Gaussian process}]
\label{ex: NIG}
A Normal Inverse Gaussian (NIG) process is a purely non-Gaussian L\'{e}vy process with L\'{e}vy density 
\[
\rho (x) = \frac{\delta \alpha e^{\beta x}}{\pi |x|}\mathbb{K}_{1}(\alpha |x|), \ x \neq 0
\]
where $\mathbb{K}_{1}$ is the modified Bessel function of the second kind with order $1$, and has integral representation 
\[
\mathbb{K}_{1}(z) = \frac{1}{2} \int_{0}^{\infty}e^{-\frac{z}{2}(t + t^{-1})}dt, \ z > 0.
\]
The parameters $\alpha, \beta, \delta$ are restricted such that $0 \leq | \beta | < \alpha$ and $\delta > 0$. Since $\mathbb{K}_{1}(z)$ decays like $z^{-1/2} e^{-z}$ as $z \to +\infty$, Condition (i) is satisfied. By a change of variables, we have that 
\[
\rho (x) = \frac{\delta e^{\beta x}}{\pi x^{2}} \int_{0}^{\infty} e^{-t - \frac{\alpha^{2}x^{2}}{4t}} dt, \ x \neq 0.
\]
Since the integral on the right hand side is well-defined for $x=0$,  $x^{2}\rho(x)$ extends to a continuous function on $\R$ as 
\begin{equation}
\rho_{\sharp}(x) := x^{2}\rho(x) = \frac{\delta e^{\beta x}}{\pi}\int_{0}^{\infty} e^{-t - \frac{\alpha^{2}x^{2}}{4t}} dt, \ x \in \R.
\label{eq: NIG density}
\end{equation}
Furthermore, it is not difficult to verify that the assumption in Lemma \ref{lem: infinite variation} is satisfied, and 
therefore, by Proposition \ref{prop: condition (ii)}, Condition (ii) is satisfied for any compact set $I$ in $\R \setminus \{ 0 \}$.

Finally, it appears to be difficult to directly verify Condition (iii) to the NIG process, but inspection of the proofs of Theorems \ref{thm: Gaussian approximation} and \ref{thm: bootstrap} shows that Condition (iii) is used only to bound the deterministic bias $\| \rho_{\sharp} * (h^{-1}W(\cdot/h)) - \rho_{\sharp} \|_{I}$. Fortunately, for the NIG process, it is possible to directly bound the deterministic bias $\| \rho_{\sharp} * (h^{-1}W(\cdot/h)) - \rho_{\sharp} \|_{I} \lesssim h^{r}$ for any $r \in (0,1)$; see below. Therefore, the conclusions of Theorems \ref{thm: Gaussian approximation} and \ref{thm: bootstrap} hold true for the NIG process with any $r \in (0,1)$, provided that other technical conditions (Conditions (iv)-(vi)) are satisfied. 

\begin{lemma}
\label{lem: NIG bias}
Let $\rho_{\sharp}$ be as in (\ref{eq: NIG density}), and let $W: \R \to \R$ be an integrable function such that $\int_{\R} W(x) dx = 1$ and $\int_{\R} x^{2}|W(x)| dx < \infty$.
Then for any $r \in (0,1)$ and any nonempty compact set $I$ in $\R$, we have that $\| \rho_{\sharp} * (h^{-1}W(\cdot/h)) - \rho_{\sharp} \|_{I} \lesssim h^{r}$,
where $*$ denotes the convolution. 
\end{lemma}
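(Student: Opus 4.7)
The plan is to reduce the bias estimate to a pointwise $r$-Hölder modulus bound for $\rho_{\sharp}$ on a compact enlargement of $I$, together with the boundedness of $\rho_{\sharp}$ on $\R$; the usual bias calculation, split between $|y| \leq h^{-1}$ and $|y| > h^{-1}$, then delivers the conclusion.

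First I would establish two global properties of $\rho_{\sharp}$. Using the identity $\int_{0}^{\infty} e^{-at - b/t}\, dt = 2\sqrt{b/a}\, \mathbb{K}_{1}(2\sqrt{ab})$ for $a,b>0$, the inner integral in (\ref{eq: NIG density}) equals $\alpha |x|\, \mathbb{K}_{1}(\alpha |x|)$; combined with $\mathbb{K}_{1}(z) \sim \sqrt{\pi/(2z)}\, e^{-z}$ at infinity and the constraint $|\beta| < \alpha$, this yields that $\rho_{\sharp}$ is bounded on $\R$ (in fact decays exponentially). Next, writing
\[
f(x) - f(y) = \int_{0}^{\infty} e^{-t}\bigl( e^{-\alpha^{2}x^{2}/(4t)} - e^{-\alpha^{2}y^{2}/(4t)} \bigr)\, dt
\]
for $f(x) := \int_{0}^{\infty} e^{-t - \alpha^{2}x^{2}/(4t)}\, dt$, I would apply the elementary inequality $|e^{-a} - e^{-b}| \leq \min(1, |a-b|)$ and split the $t$-integral at $t_{0} := \alpha^{2}|x^{2} - y^{2}|/4$: the piece on $(0, t_{0}]$ is bounded by $t_{0}$, while the piece on $(t_{0}, \infty)$ is bounded by $t_{0} \int_{t_{0}}^{\infty} e^{-t}/t\, dt \lesssim t_{0}(1 + |\log t_{0}|)$. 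Hence, for $|x^{2} - y^{2}|$ small,
\[
|f(x) - f(y)| \lesssim |x^{2} - y^{2}|\, \bigl(1 + |\log|x^{2} - y^{2}||\bigr),
\]
and a product-rule decomposition (using boundedness of $f$ and Lipschitz continuity of $e^{\beta x}$ on compact sets) transfers this to $\rho_{\sharp}$ on any compact subset of $\R$. Since $t(1 + |\log t|) = O(t^{r})$ as $t \to 0^{+}$ for every $r \in (0,1)$, this gives $|\rho_{\sharp}(x) - \rho_{\sharp}(y)| \lesssim |x - y|^{r}$ for $x,y$ in the relevant compact set.

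For the bias itself, $\int W = 1$ yields
\[
(\rho_{\sharp} * W_{h})(x) - \rho_{\sharp}(x) = \int_{\R} \bigl[ \rho_{\sharp}(x - hy) - \rho_{\sharp}(x) \bigr] W(y)\, dy
\]
with $W_{h}(\cdot) = h^{-1}W(\cdot/h)$. Splitting at $|y| = h^{-1}$: on $|y| \leq h^{-1}$ with $x \in I$, the point $x - hy$ lies in the compact set $I^{1}$, so the Hölder bound just established dominates the integrand by $C(h|y|)^{r}|W(y)|$, whose integral is $O(h^{r})$ (using $|y|^{r} \leq 1 + y^{2}$ together with integrability of $W$ and of $y^{2}W$); on $|y| > h^{-1}$, boundedness of $\rho_{\sharp}$ together with the Chebyshev-type bound $\int_{|y|>h^{-1}} |W(y)|\, dy \leq h^{2}\int_{\R} y^{2}|W(y)|\, dy$ gives $O(h^{2})$. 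Summing yields $O(h^{r})$ uniformly in $x \in I$.

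The main obstacle is the Hölder estimate itself. A naive attempt to differentiate under the integral defining $f$ breaks down because $f'(x)$ carries a logarithmic blow-up near $x=0$ (visible through $z\, \mathbb{K}_{1}(z) = 1 + \frac{z^{2}}{2}\log(z/2) + \cdots$), so $\rho_{\sharp}$ is not $C^{1}$ with bounded derivative and genuine Lipschitz continuity cannot be extracted this way. The $\min(1, \cdot)$ truncation trick above sidesteps this singularity at the price of an unavoidable logarithmic factor, and this factor is precisely what forces the statement to be weakened from the ``$h$'' one would expect for a truly Lipschitz target to ``$h^{r}$'' for every $r \in (0,1)$.
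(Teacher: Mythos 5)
Your proof is correct, but it takes a genuinely different route from the paper's. The paper first completes the square in the exponent, rewriting the inner integral as $\int_0^\infty e^{-t(1-\beta^2)} e^{-(x-2\beta t)^2/(4t)}\,dt$ (after normalizing $\alpha=1$), and then applies the elementary bound $1-e^{-u}\leq (1-e^{-u})^r\leq u^r$ for $u\geq 0$ directly to the integrand. That yields a \emph{global} weighted estimate of the form $|\rho_\sharp(x-yh)-\rho_\sharp(x)|\lesssim h^{2r}|y|^{2r}+(1+|x|^r)h^r|y|^r$, and the bias is then handled by integrating this bound against $|W(y)|$ in one stroke (using $\int (1+y^2)|W|<\infty$), with no splitting of the domain and no logarithmic loss. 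Your approach keeps the factor $e^{\beta x}$ separate, establishes the modulus $|f(x)-f(y)|\lesssim |x^2-y^2|(1+|\log|x^2-y^2||)$ via the $\min(1,|a-b|)$ truncation and the exponential-integral estimate $\int_{t_0}^\infty e^{-t}t^{-1}\,dt\lesssim 1+|\log t_0|$, upgrades it to an $r$-H\"older bound locally using $t(1+|\log t|)=O(t^r)$, and then must split the convolution at $|y|=h^{-1}$, controlling the tail via global boundedness of $\rho_\sharp$ and the Chebyshev bound $\int_{|y|>h^{-1}}|W|\leq h^2\int y^2|W|$. Both arguments are sound; the paper's completion of the square is the more surgical move (it produces a clean $u^r$ bound with no logarithmic detour and no tail-splitting), whereas your version is more elementary in spirit and makes the boundary between ``genuinely Lipschitz'' and ``$r$-H\"older for every $r<1$'' visible through the logarithmic factor, at the cost of needing the separate tail estimate and the explicit boundedness of $\rho_\sharp$.
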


\end{example}

\begin{example}[\textbf{Brownian motion + purely non-Gaussian L\'{e}vy process}]
Let $L_{t} = \sigma B_{t} + J_{t}$, where $\sigma > 0$,  $B = (B_{t})_{t \geq 0}$ is a standard Brownian motion, and $J = (J_{t})_{t \geq 0}$ is a purely non-Gaussian L\'{e}vy process independent of $B$ (with drift). We assume that $J$ is one of purely non-Gaussian L\'{e}vy processes in Examples \ref{ex: compound poisson}--\ref{ex: NIG}. 
For the compound Poisson process case, we assume that $x^{4} f_{X} \in \ell^{\infty}(\R)$, and Conditions (i) and (iii) are satisfied for $\rho = \lambda f_{X}$.
In view of Remark \ref{rem: condition (ii)}, it is clear that Conditions (i)-(iii) are satisfied  with  $r$ given in Examples \ref{ex: compound poisson}--\ref{ex: NIG}, as long as $I$ is properly chosen. 
\end{example}

\section{Simulation results}
\label{sec: simulation results}

\subsection{Simulation framework}

In this section, we present simulation results to evaluate the finite-sample performance of the proposed confidence bands. 
We consider the following data generating processes.

\begin{enumerate}
\item \textbf{Brownian motion + compound Poisson process}. Let $L_{t} = \sigma B_{t} + J_{t}$,
where $\sigma \geq 0, B = (B_{t})_{t \geq 0}$ is a standard Brownian motion, and $J_{t} = \sum_{k=1}^{N_{t}}X_{k}$ is a compound Poisson process (see Example 5.1). The Poisson process $N$ has intensity $\lambda > 0$. We set $\lambda = 4$ or $10$. We consider two types of jump size distributions. For the first case, $f_{X}$ is the density of the normal distribution $N(0,v^{2})$ and the L\'{e}vy density is  $\rho(x) =  \lambda e^{-x^2/(2v^2)}/\sqrt{2\pi v^2}$. We denote this case by BCN($\sigma, v$). For the second case, $f_{X}$ is the density of the Laplace distribution with location $0$ and scale $v > 0$, and the L\'{e}vy density is  $\rho(x) =  \lambda e^{-|x|/v}/(2v)$. We denote the latter case by BCL($\sigma,v$). 
For BCN($\sigma,v$) and BCL($\sigma,v$), we take $I=[-0.75,-0.25] \cup [0.25,0.75]$.

If $\sigma = 0$, then $L$ reduces to the compound Poisson process $J$, for which we set $\hat{\sigma}^{2} = 0$. 
In case of  $\sigma>0$, we estimate $\sigma^2$ by the TRV estimator $\hat{\sigma}^{2}_{TRV}$ with $\alpha_{0} = 3$ and $\theta_{0} = 0.48$ (see Example \ref{ex: volatility estimator}). 
We also examined the performance of the confidence bands with estimated $\sigma^{2}$ in case of $\sigma=0$, but the simulation results are almost identical to those under $\hat{\sigma}^{2} = 0$. Hence, we only report the simulation results with $\hat{\sigma}^{2} =0$ in case of $\sigma= 0$. The same comment applies to the Gamma process case.

\item \textbf{Gamma process}. A Gamma process is a pure jump L\'evy process with L\'evy density $\rho(x) =  c_{+}x^{-1}e^{- \lambda_{+} x}1_{(0,\infty)}(x)$ (see Example 5.2). We denote this case by G($c_{+},\lambda_{+}$). The increment distribution $P_{\Delta}$ is the Gamma distribution with shape parameter $c_{+} \Delta$ and scale parameter $1/\lambda_{+}$. For the Gamma process case, we take $I=[0.25,0.75]$. 
\end{enumerate}

We use the kernel function $W$ whose Fourier transform $\varphi_{W}$ is specified by (\ref{eq: flap-top}), where we choose $b=1$ and $c=0.05$. We consider the following configurations for the sample size $n$ and the time span $\Delta$: $n = 5 \times 10^{4}$ or $10^{5}$, and $\Delta = 0.01$ or $0.005$. Here $n \Delta$ ranges from $250$ to $1000$.

\begin{remark}
From a theoretical point of view, we do not have to estimate $\sigma^2$ even when $\sigma>0$ (see  the discussion on Condition (vi) in Assumption 4.1). However,  in case of $\sigma > 0$, we found that the estimate $\hat{\rho}$ with $\hat{\sigma}^{2} = 0$ tends to be less precise near the origin than that with $\hat{\sigma}^{2} = \hat{\sigma}^{2}_{TRV}$. So, from a practical point of view, we recommend to estimate $\sigma^2$ when implementing our methods. 
\end{remark}


\subsection{Bandwidth selection}
Now, we discuss bandwidth selection. We adapt an idea of \cite{BiDuHoMu07} on bandwidth selection in density deconvolution. From a theoretical point of view, for our confidence bands to work, we have to choose bandwidths that are of smaller order than the optimal rate for estimation under the the sup-norm loss. At the same time, choosing a too small bandwidth results in a too wide confidence band. 
Therefore, heuristically, we should choose a bandwidth ``slightly'' smaller than the optimal one that minimizes the $L^{\infty}$-distance $\| \hat{\rho} - \rho \|_{I}$. 

Let $\hat{\rho}_{h}$ denote the spectral estimate with bandwidth $h$. Figure \ref{fig:A1} depicts five realizations of the $L^{\infty}$-distance $\| \hat{\rho}_{h} - \rho \|_{I}$ with different bandwidth values for BCN(0,1/2) with $\lambda=4$ and G(0.2,1), both with $(n,\Delta) = (5\times 10^4, 0.01)$. It is observed that as $h$ increases, $\| \hat{\rho}_{h} - \rho \|_{I}$ is sharply decreasing for $h < h^{\star}$ (say), and for $h > h^{\star}$, $\| \hat{\rho}_{h} - \rho \|_{I}$ is slowly increasing. We aim to choose a bandwidth slightly smaller than $h^{\star}$. Of course, the problem is that the value of $\| \hat{\rho}_{h} - \rho \|_{I}$ is unknown to us. 
Now, Figure \ref{fig:A2} depicts five realizations of the $L^{\infty}$-distance between the estimates of $\rho$ with adjacent bandwidth values. To be precise, we prepare grids of bandwidths $h_{1} < \cdots < h_{J}$, and compute the $L^{\infty}$-distance $\| \hat{\rho}_{h_{j}} - \hat{\rho}_{h_{j-1}} \|_{I}$; Figure \ref{fig:A2} depicts those values with $h=h_{j} \ (j=2,\dots,J)$. It is observed that shape of $\| \hat{\rho}_{h_{j}} - \hat{\rho}_{h_{j-1}} \|_{I}$ partly ``mimics'' that of $\| \hat{\rho}_{h} - \rho \|_{I}$; in fact, $\| \hat{\rho}_{h_{j}} - \hat{\rho}_{h_{j-1}} \|_{I}$ is sharply decreasing for $h_{j} < h^{\star}$, but for $h_{j} > h^{\star}$, $\| \hat{\rho}_{h_{j}} - \hat{\rho}_{h_{j-1}} \|_{I}$ is almost flat. Our idea of bandwidth selection is as follows: starting from $j=2$, choose the first $j$ such that $\| \hat{\rho}_{h_{j}} - \hat{\rho}_{h_{j-1}} \|_{I}$ is below $\kappa \times \min \{ \| \hat{\rho}_{h_{k}} - \hat{\rho}_{h_{k-1}} \|_{I} : k=2,\dots,J \}$ for some $\kappa > 1$; our choice of the bandwidth is $h = h_{j}$. Heuristically, this rule would choose a bandwidth ``slightly'' smaller than $h^{\star}$ (as long as the threshold value $\kappa$ is reasonably chosen). Formally, we employ the following rule for bandwidth selection.


\begin{enumerate}
\item Set a pilot bandwidth $h^{P} = M\Delta^{1/2}$ for some $M > 1$, and make a list of candidate bandwidths $h_{j} = jh^{P}/J$ for $j=1,\dots,J$. 
\item Choose the smallest bandwidth $h_{j} \ (j \geq 2)$ such that the adjacent $L^{\infty}$-distance $\| \hat \rho_{h_j} - \hat \rho_{h_{j-1}}\|_I$ is smaller than $\kappa \times \min \{ \| \hat{\rho}_{h_{k}} - \hat{\rho}_{h_{k-1}} \|_{I} : k=2,\dots,J \}$ for some $\kappa > 1$
\end{enumerate}

In this simulation study, we choose $M=2, J=20$, and $\kappa= 20$. In practice, it is also recommended to make use of visual information on how $\| \hat{\rho}_{h_{j}} - \hat{\rho}_{h_{j-1}} \|_{I}$ behaves as $j$ increases when determining the bandwidth. 

\begin{remark}
We have also examined a version of the bandwidth selection rule with $\hat{\rho}_{h_{j}} - \hat{\rho}_{h_{j-1}}$ replaced by $x^{2}(\hat{\rho}_{h_{j}} - \hat{\rho}_{h_{j-1}})$, but found that the rule described above shows better performances in terms of coverage probabilities.  
So, we present simulation results with the above rule. 
\end{remark}

\begin{figure}[H]
  \begin{center}
    \begin{tabular}{cc}

      \begin{minipage}{0.5\hsize}
        \begin{center}
          \includegraphics[clip, width=6cm]{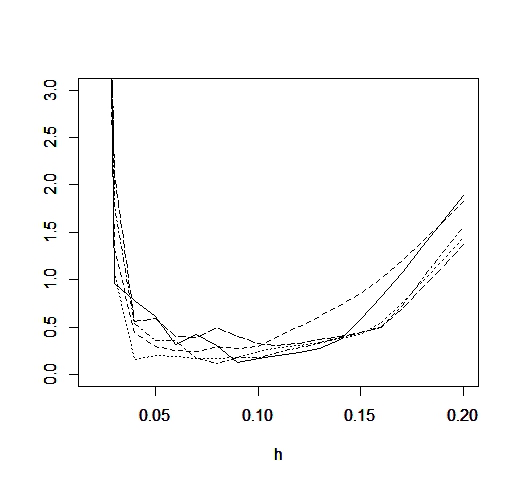}
        \end{center}
      \end{minipage}

      \begin{minipage}{0.5\hsize}
        \begin{center}
          \includegraphics[clip, width=6cm]{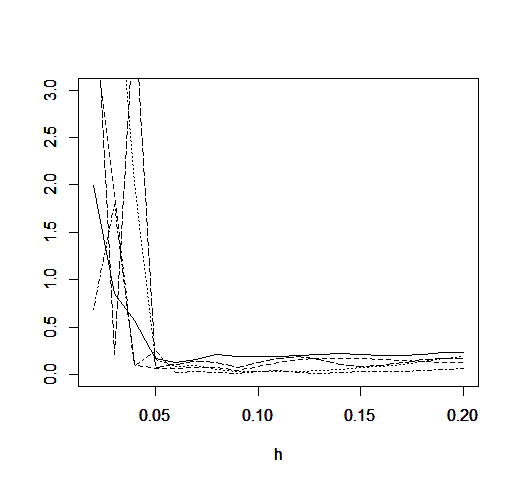}
        \end{center}
      \end{minipage}

    \end{tabular}
    \caption{$L^{\infty}$-distance between the true L\'{e}vy density $\rho$ and estimates $\hat{\rho}$ for different bandwidth values. The left figure corresponds to BCN(0,1/2) with $\lambda=4$, and the right figure corresponds to G(0.2,1), both with $(n,\Delta) = (5\times 10^4, 0.01)$. \label{fig:A1}}   
  \end{center}
\end{figure}

\begin{figure}[H]
  \begin{center}
    \begin{tabular}{cc}

      \begin{minipage}{0.5\hsize}
        \begin{center}
          \includegraphics[clip, width=6cm]{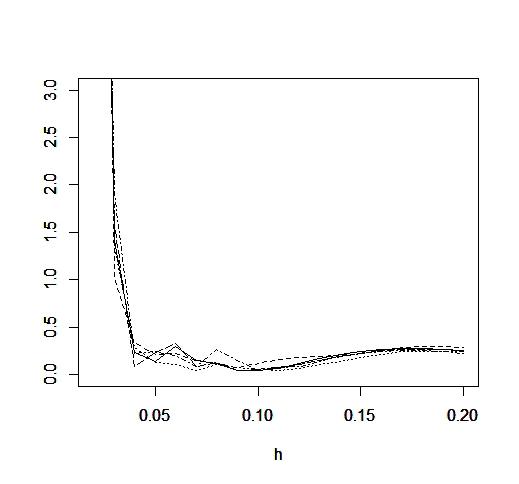}
        \end{center}
      \end{minipage}

      \begin{minipage}{0.5\hsize}
        \begin{center}
          \includegraphics[clip, width=6cm]{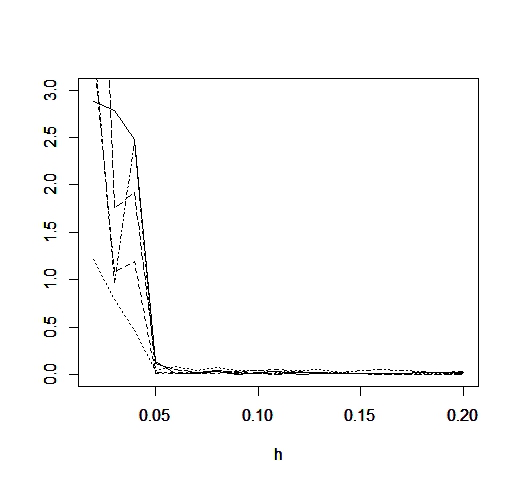}
        \end{center}
      \end{minipage}

    \end{tabular}
    \caption{$L^{\infty}$-distance between the estimates of $\rho$ with adjacent bandwidths for BCN(0,1/2) with $\lambda=4$ (left) and G(0.2,1) (right). \label{fig:A2}}    
  \end{center}
\end{figure}

\subsection{Simulation results}
In this simulation study, we focus on the multiplier bootstrap (MB) confidence band $\hat{\mathcal{C}}_{1-\tau}^{MB}$. 
We present simulated coverage probabilities of the MB confidence band together with simulated values of  the expected mean width of the band $\Ep \left [ \frac{1}{|I|} \int_{I}|\widehat{\mathcal{C}}_{1-\tau}^{MB}(x)|dx \right ]$, 
where $| A |$ denotes the Lebesgue measure for a measurable set $A \subset \R$. The number of Monte Carlo repetitions is $250$. To compute the critical value $\hat{c}_{n}^{MB}(1-\tau)$, we generate $1,500$ multiplier bootstrap replications for each run of the simulations. 

Tables \ref{table: T1} and \ref{table: T2} present simulation results for models BCN($\sigma,v$) and BCL($\sigma,v$) with $\sigma \in \{ 0,1 \}, v = 1/2$, and G($0.2,1$) under $(n,\Delta) = (5 \times 10^4, 0.005), (10^5, 0.005), (5 \times 10^4, 0.01)$, and $(10^5, 0.01)$. Comparing BCN($0,1/2$) with BCL($0,1/2$) and G(0.2,1), we find that BCN($0,1/2$) is apt to give more accurate coverage probabilities. This is partly due to the smoothness of L\'{e}vy densities. Since the normal density is smoother around the origin than those of Laplace and Gamma-L\'{e}vy densities, the estimate $\widehat{\rho}$ for BCN tends to be less biased than that for other cases.     
Figure \ref{fig:A3} depicts $90\%$ MB confidence bands for BCN($0,1/2$) (left), BCL($0,1/2$) (center), both with $\lambda=10$, and G(0.2,1) (right) with $(n,\Delta) = (5 \times 10^4, 0.01)$ (top row) and $(10^5, 0.01)$ (bottom row), based on one realization for each model. Figure \ref{fig:A4} depicts $90\%$ MB confidence bands for BCN($1,1/2$) and BCL($1,1/2$), both with $\lambda=10$. As seen from Figures 5 and 6, the width of the $90\%$ MB confidence band tends to increase near the origin when the Brownian component is present (i.e., $\sigma=1$). This partly comes from the difficulty of distinguishing small jumps from  fluctuations due to the Brownian component. 

Overall, the simulated coverage probabilities are reasonably close to the nominal coverage probabilities, although in some cases there are rooms for improvement. 
We also find that for every case, the expected mean width tends to decrease as $n\Delta$ increases, which is consistent with our theory. 
Notably, for BCN and BCL, the MB confidence bands exhibit similar performance for either case where the Brownian component is absent ($\sigma=0$) or present ($\sigma=1$). 

{\small
\begin{table}[H]
\begin{center}
\begin{tabular}{cccccccc}
\hline \hline
                                  &  &       & \multicolumn{5}{c}{Model} \\ \cline{4-8}
\multirow{2}{*}{\shortstack{Cov. Prob. \\$(1-\tau)$}} &  & &BCN(0,1/2) &BCN(0,1/2)  &BCL(0,1/2) &BCL(0,1/2)  & G(0.2,1)\\ 
                           & $\Delta$ & $n$              & $\lambda=4$ &$\lambda=10$ &$\lambda=4$ &$\lambda=10$  &\\ \hline

\multirow{8}{*}{0.90}  & 0.005 & $5\times 10^4$    &0.816       &0.824        &0.812        &0.824         &0.812\\
                          &          &                         &(1.150)     &(1.867)      &(0.818)      &(1.294)       &(0.317)\\
                          &          & $10^5$               &0.828       &0.836        &0.820       &0.808         &0.812\\
                          &          &                         &(0.816)     &(1.311)      &(0.573)      &(0.908)       &(0.213)\\ \cline{2-8}
                          &  0.01   & $5\times 10^4$   &0.824       &0.840        &0.820        &0.816         &0.820\\
                          &           &                        &(0.787)     &(1.285)      &(0.560)      &(0.922)       &(0.195)\\ 
                          &          &  $10^5$              &0.868       &0.856        &0.824        &0.796         &0.816\\
                          &         &                          &(0.545)     &(0.905)      &(0.399)      &(0.659)       &(0.131)\\  \hline

\multirow{8}{*}{0.95} & 0.005 & $5\times 10^4$    &0.908      &0.912        &0.908        &0.916         &0.908\\
                          &          &                         &(1.276)     &(2.071)      &(0.919)      &(1.453)       &(0.364)\\
                           &         & $10^5$               &0.912       &0.924        &0.908        &0.904         &0.920\\
                          &          &                         &(0.908)     &(1.455)      &(0.643)      &(1.019)       &(0.245)\\ \cline{2-8}
                           & 0.01   & $5\times 10^4$   &0.916       &0.928        &0.912        &0.916        &0.912\\
                          &          &                         &(0.876)     &(1.428)      &(0.631)      &(1.037)       &(0.226)\\ 
                           &         & $10^5$               &0.932       &0.936        &0.920        &0.876         &0.904\\
                          &           &                        &(0.607)     &(1.004)      &(0.449)      &(0.740)       &(0.153)\\  \hline 

\multirow{8}{*}{0.99} & 0.005 & $5\times 10^4$    &0.972      &0.976        &0.968        &0.984         &0.964\\
                          &          &                         &(1.532)     &(2.441)      &(1.110)      &(1.742)       &(0.454)\\
                           &         & $10^5$               &0.988       &0.984        &0.980        &0.976         &0.984\\
                          &          &                         &(1.090)     &(1.712)      &(0.771)      &(1.235)       &(0.301)\\ \cline{2-8}
                           & 0.01   & $5\times 10^4$   &0.972       &0.988        &0.984        &0.988        &0.980\\
                          &          &                         &(1.044)     &(1.695)      &(0.767)      &(1.265)       &(0.285)\\ 
                           &         & $10^5$               &0.984       &0.992        &0.992        &0.964         &0.988\\
                          &           &                        &(0.742)     &(1.184)      &(0.540)      &(0.892)       &(0.193)\\  \hline \hline
\end{tabular}
\caption{Empirical coverage probabilities of the MB confidence bands for BCN(0,1/2) and BCL(0,1/2) on $I = [-0.75, -0.25]\cup[0.25,0.75]$, and G(0.2,1) on $I = [0.25, 0.75]$, based on 250 Monte Carlo repetitions. 
Inside the parentheses are values of the expected mean width.\label{table: T1}}
\end{center}
\end{table}
}

{\small
\begin{table}[H]
\begin{center}
\begin{tabular}{ccccccc}
\hline \hline
                                  &  &       & \multicolumn{4}{c}{Model} \\ \cline{4-7}
\multirow{2}{*}{\shortstack{Cov. Prob. \\$(1-\tau)$}} &  & &BCN(1,1/2) &BCN(1,1/2)  &BCL(1,1/2) &BCL(1,1/2)  \\ 
                           & $\Delta$ & $n$              & $\lambda=4$ &$\lambda=10$ &$\lambda=4$ &$\lambda=10$  \\ \hline

\multirow{8}{*}{0.90}  & 0.005 & $5\times 10^4$    &0.804         &0.804          &0.832          &0.828           \\
                          &          &                         &(1.447)       &(2.425)        &(1.002)        &(1.536)         \\
                          &          & $10^5$               &0.808         &0.796          &0.820         &0.816        \\
                          &          &                         &(1.050)       &(1.756)        &(0.699)        &(1.070)       \\ \cline{2-7}
                          &  0.01   & $5\times 10^4$   &0.812         &0.808          &0.824          &0.804        \\
                          &           &                        &(1.113)       &(1.900)        &(0.870)        &(1.303)      \\ 
                          &          &  $10^5$              &0.824         &0.804          &0.816          &0.792      \\
                          &         &                          &(0.811)       &(1.409)        &(0.590)        &(0.946)     \\  \hline

\multirow{8}{*}{0.95} & 0.005 & $5\times 10^4$    &0904        &0.904          &0.912           &0.908   \\
                          &          &                         &(1.606)       &(2.692)        &(1.119)         &(1.720)       \\
                           &         & $10^5$               &0.908         &0.892         &0.916            &0.916      \\
                          &          &                         &(1.165)       &(1.945)        &(0.784)         &(1.199)      \\ \cline{2-7}
                           & 0.01   & $5\times 10^4$   &0.912         &0.908          &0.920           &0.904        \\
                          &          &                         &(1.241)       &(2.116)        &(0.975)         &(1.460)       \\ 
                           &         & $10^5$               &0.916         &0.896          &0.916           &0.896      \\
                          &           &                        &(0.901)       &(1.568)        &(0.661)         &(1.056)       \\  \hline 

\multirow{8}{*}{0.99} & 0.005 & $5\times 10^4$   &0.956        &0.968        &0.956         &0.988\\
                          &          &                         &(1.840)     &(3.113)      &(1.367)       &(2.067)\\
                           &         & $10^5$               &0.960       &0.972        &0.972        &0.980\\
                          &          &                         &(1.392)     &(2.301)      &(0.947)       &(1.460)\\ \cline{2-7}
                           & 0.01   & $5\times 10^4$   &0.972       &0.976        &0.984        &0.976\\
                          &          &                         &(1.478)     &(2.465)      &(1.149)       &(1.743)\\ 
                           &         & $10^5$               &0.976       &0.964        &0.964        &0.968\\
                          &           &                        &(1.059)     &(1.812)      &(0.798)       &(1.273)\\  \hline \hline
\end{tabular}
\caption{Empirical coverage probabilities of the MB confidence bands for BCN(1,1/2) and BCL(1,1/2) on $I = [-0.75, -0.25]\cup[0.25,0.75]$, based on 250 Monte Carlo repetitions. 
Inside the parentheses are values of the expected mean width.\label{table: T2}}
\end{center}
\end{table}
}

\begin{figure}[H]
  \begin{center}
    \begin{tabular}{ccc}

      \begin{minipage}{0.33\hsize}
        \begin{center}
          \includegraphics[clip, width=4.5cm]{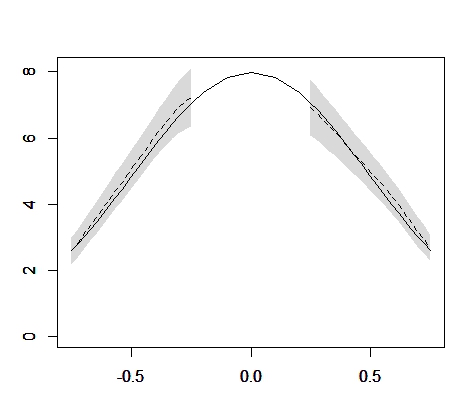}
        \end{center}
      \end{minipage}

      \begin{minipage}{0.33\hsize}
        \begin{center}
          \includegraphics[clip, width=4.5cm]{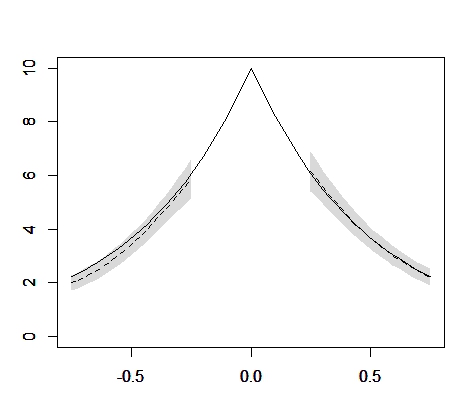}
        \end{center}
      \end{minipage}

      \begin{minipage}{0.33\hsize}
        \begin{center}
          \includegraphics[clip, width=4.5cm]{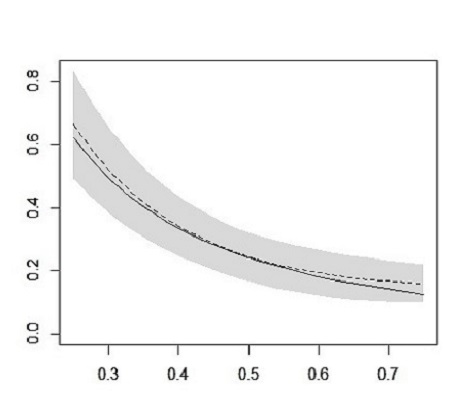}
        \end{center}
      \end{minipage}\\

      \begin{minipage}{0.33\hsize}
        \begin{center}
          \includegraphics[clip, width=4.5cm]{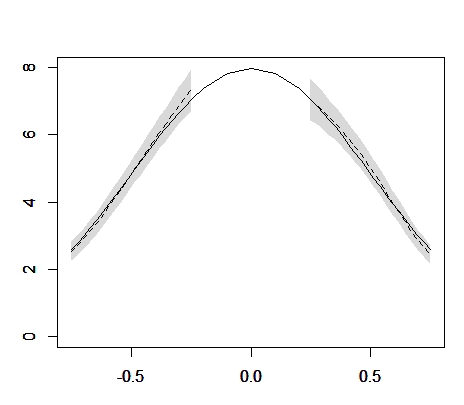}
        \end{center}
      \end{minipage}

      \begin{minipage}{0.33\hsize}
        \begin{center}
          \includegraphics[clip, width=4.5cm]{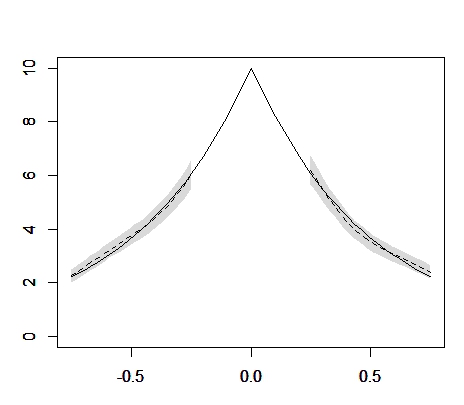}
        \end{center}
      \end{minipage}

      \begin{minipage}{0.33\hsize}
        \begin{center}
          \includegraphics[clip, width=4.5cm]{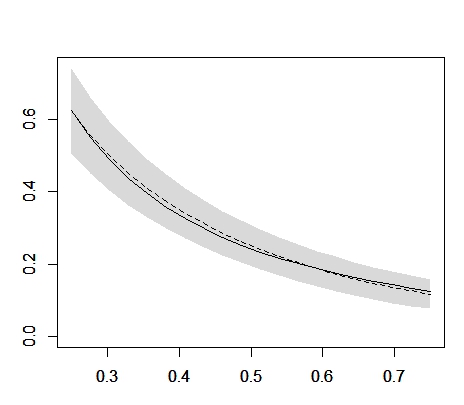}
        \end{center}
      \end{minipage}

    \end{tabular}
    \caption{Estimates of the L\'evy densities (dashed lines) for BCN(0,1/2) (left), BCL(0,1/2) (center), and G(0.2,1) (right), together with $90\%$ MB confidence bands (gray regions). 
The solid lines correspond to the true density functions.  $(n, \Delta) = (5\times 10^4, 0.01)$ (top row) and $(n, \Delta) = (10^5, 0.01)$ (bottom row). \label{fig:A3}}
 \end{center}
\end{figure}

\begin{figure}[H]
  \begin{center}
    \begin{tabular}{cc}

      \begin{minipage}{0.5\hsize}
        \begin{center}
          \includegraphics[clip, width=4.5cm]{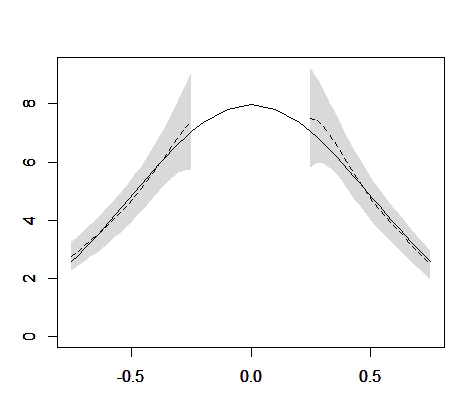}
        \end{center}
      \end{minipage}

      \begin{minipage}{0.5\hsize}
        \begin{center}
          \includegraphics[clip, width=4.5cm]{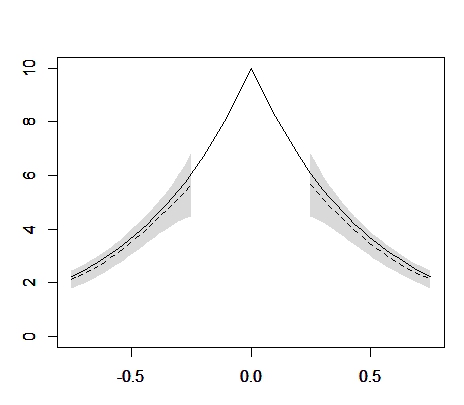}
        \end{center}
      \end{minipage}\\

      \begin{minipage}{0.5\hsize}
        \begin{center}
          \includegraphics[clip, width=4.5cm]{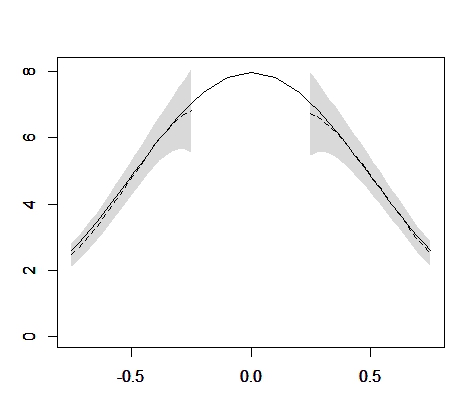}
        \end{center}
      \end{minipage}

      \begin{minipage}{0.5\hsize}
        \begin{center}
          \includegraphics[clip, width=4.5cm]{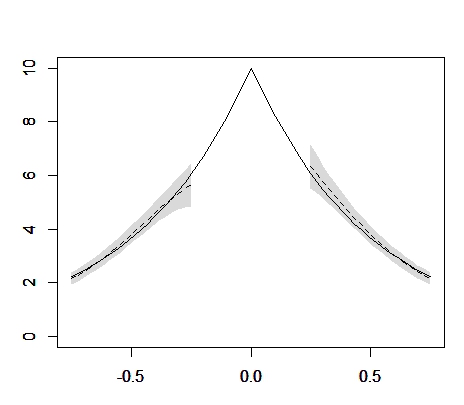}
        \end{center}
      \end{minipage}

    \end{tabular}
    \caption{Estimates of the L\'evy densities (dashed lines) for BCN(1,1/2) (left) and BCL(1,1/2) (right), together with $90\%$ MB confidence bands (gray regions). 
The solid lines correspond to the true density functions.  $(n, \Delta) = (5\times 10^4, 0.01)$ (top row) and $(n, \Delta) = (10^5, 0.01)$ (bottom row).\label{fig:A4}}
  \end{center}
\end{figure}

\section{Conclusion}
\label{sec: conclusion}

In this paper, we have developed bootstrap methods to construct uniform confidence bands for spectral estimators of L\'{e}vy densities from high-frequency observations. We have studied two bootstrap methods, namely, Gaussian multiplier and empirical bootstraps, and established asymptotic validity of the proposed confidence bands. 
Notably, the proposed confidence bands are shown to be valid for a wide class of L\'{e}vy processes.
We have also developed a practical method to choose a bandwidth.

\appendix 

\section{Proofs of Theorems \ref{thm: Gaussian approximation} and \ref{thm: bootstrap}}

In what follows, we always assume Assumption \ref{as: assumption 1}. The proofs rely on modern  empirical process theory. For a probability measure $Q$ on a measurable space $(S,\mS)$ and a  class of measurable functions $\mF$ on $S$ such that $\mF \subset L^{2}(Q)$, let $N(\mF,\| \cdot \|_{Q,2},\varepsilon)$ denote the $\varepsilon$-covering number for $\mF$ with respect to the $L^{2}(Q)$-seminorm  $\| \cdot \|_{Q,2}$.   See Section 2.1 in \cite{vaWe96} for details.
Let $\stackrel{d}{=}$ denote the equality in distribution. 

\subsection{Auxiliary lemmas}

We begin with proving some auxiliary lemmas that will be used to prove Theorems \ref{thm: Gaussian approximation} and \ref{thm: bootstrap}. We will freely use the following moment estimates for $L_{\Delta}=Y_{n,1}$. 

\begin{lemma}
\label{lem: moment}
We have 
\[
\Ep[ L_{\Delta}^{2} ] = \Delta \left (\sigma^{2}  +\int_{\R} x^{2}\rho(x) dx \right ) + \Delta^{2} \gamma_{c}^{2} \lesssim \Delta \quad \text{and} \quad  \Ep[ L_{\Delta}^{4} ] = \Delta \int_{\R} x^{4} \rho(x) dx + o(\Delta) \lesssim \Delta. 
\]
\end{lemma}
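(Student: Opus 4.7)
The plan is to reduce the moment computations to derivatives of $\varphi_\Delta$ at $u=0$ and then expand via $\varphi_\Delta = e^{\Delta\psi}$.

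By Condition (i) of Assumption \ref{as: assumption 1} together with Corollary 25.8 in \cite{Sa99}, $\Ep[L_\Delta^4] < \infty$, so $\varphi_\Delta$ is four-times continuously differentiable on $\R$ and $\Ep[L_\Delta^{k}] = i^{-k}\varphi_\Delta^{(k)}(0)$ for $k=1,2,3,4$. First I would compute the derivatives of $\psi$ at the origin. Starting from the identity $\psi''(u) = -\sigma^2 - \int_\R e^{iux} x^2 \rho(x)\,dx$ already derived in Section \ref{sec: spectral estimation}, differentiation under the integral (legitimate because $\int_\R |x|^k \rho(x)\,dx < \infty$ for $k=2,3,4$, which follows from Condition (i) combined with $\int_\R (1\wedge x^2)\rho(x)\,dx < \infty$) gives $\psi''(0) = -\sigma^2 - \int_\R x^2\rho(x)\,dx$, $\psi^{(3)}(0) = -i\int_\R x^3 \rho(x)\,dx$, and $\psi^{(4)}(0) = \int_\R x^4 \rho(x)\,dx$. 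Together with $\psi'(0) = i\gamma_c$ already recorded in Section \ref{sec: spectral estimation}, this supplies all the inputs.

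Next I would apply Fa\`a di Bruno's formula (or the iterated chain rule) to $\varphi_\Delta = e^{\Delta\psi}$ at $u=0$, using $\psi(0)=0$. A direct computation yields
\[
\varphi_\Delta''(0) = \Delta \psi''(0) + \Delta^2 (\psi'(0))^2,
\]
so $\Ep[L_\Delta^2] = -\varphi_\Delta''(0)$ matches the claimed formula exactly. An analogous expansion yields
\[
\varphi_\Delta^{(4)}(0) = \Delta \psi^{(4)}(0) + 3\Delta^2 (\psi''(0))^2 + 4\Delta^2 \psi'(0)\psi^{(3)}(0) + 6\Delta^3 (\psi'(0))^2 \psi''(0) + \Delta^4 (\psi'(0))^4,
\]
whose leading contribution is $\Delta \int_\R x^4 \rho(x)\,dx$ and whose remaining terms are all $O(\Delta^2) = o(\Delta)$. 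The bounds $\lesssim \Delta$ in both assertions then follow since $\Delta \to 0$.

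There is no substantive obstacle here: the lemma amounts to a chain-rule bookkeeping exercise, with the only technical input being the standard dominated-convergence justification for differentiating under the integral in the representation of $\psi$, which rests entirely on the finite-fourth-moment hypothesis in Condition (i).
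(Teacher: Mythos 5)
Your proof is correct and follows the same route as the paper, which simply cites the identities $\Ep[L_\Delta^2]=-\varphi_\Delta''(0)$ and $\Ep[L_\Delta^4]=\varphi_\Delta^{(4)}(0)$; the chain-rule expansions of $\varphi_\Delta''$ and $\varphi_\Delta^{(4)}$ that you write out are exactly those the paper records (in full, at general $u$) in the proof of Lemma \ref{lem: deconvolution kernel}, so you have merely made the bookkeeping explicit.
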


\begin{proof}
This follows from the observations that $\Ep[ L_{\Delta}^{2} ] =- \varphi_{\Delta}''(0)$ and $\Ep[ L_{\Delta}^{4} ] = \varphi_{\Delta}^{(4)}(0)$. 
\end{proof}
%

\begin{lemma}
\label{lem: lower bound on chf}
We have $\inf_{|u| \leq h^{-1}} |\varphi_{\Delta}(u)| \gtrsim1$.
\end{lemma}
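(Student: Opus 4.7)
The plan is to obtain the lower bound via the standard formula $|\varphi_{\Delta}(u)| = e^{\Delta \operatorname{Re} \psi(u)}$ and then to control $\operatorname{Re} \psi(u)$ from below on $|u| \leq h^{-1}$ using the finite variance hypothesis together with the bandwidth restriction $h \gtrsim \Delta^{1/2}$.

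First, from the alternative representation of the characteristic exponent available under Condition (i),
\[
\psi(u) = -\frac{u^{2} \sigma^{2}}{2} + i u \gamma_{c} + \int_{\R} \left( e^{iux} - 1 - iux \right) \rho(x) \, dx,
\]
I would take real parts to obtain
\[
\operatorname{Re} \psi(u) = -\frac{u^{2} \sigma^{2}}{2} - \int_{\R} \left( 1 - \cos(ux) \right) \rho(x) \, dx.
\]
Then I would apply the elementary inequality $1 - \cos(ux) \leq \tfrac{1}{2} u^{2} x^{2}$, so that together with Condition (i) (which provides $\int_{\R} x^{2} \rho(x)\, dx < \infty$) we get
\[
\operatorname{Re} \psi(u) \geq -\frac{u^{2}}{2} \left( \sigma^{2} + \int_{\R} x^{2} \rho(x)\, dx \right) =: -C_{0} u^{2}
\]
for some finite constant $C_{0} > 0$.

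Combining these, $|\varphi_{\Delta}(u)| \geq e^{-C_{0} \Delta u^{2}}$. Finally, for $|u| \leq h^{-1}$, the condition $h \gtrsim \Delta^{1/2}$ of Assumption \ref{as: assumption 1}(v) yields $\Delta u^{2} \leq \Delta h^{-2} \lesssim 1$, so that $|\varphi_{\Delta}(u)| \geq e^{-C_{1}}$ uniformly in $|u| \leq h^{-1}$ for some constant $C_{1} > 0$. This is the desired conclusion. No real obstacle arises here; the argument is essentially a one-line consequence of the L\'evy--Khinchin formula and the finite variance hypothesis, the only subtlety being the interplay between the cutoff frequency $h^{-1}$ and the scaling $\Delta$, which is precisely what the hypothesis $h \gtrsim \Delta^{1/2}$ encodes.
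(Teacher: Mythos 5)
Your proof is correct and follows essentially the same route as the paper: both use the identity $|\varphi_{\Delta}(u)| = e^{\Delta \operatorname{Re}\psi(u)}$ (or the equivalent lower bound $\geq e^{-\Delta|\psi(u)|}$), bound the characteristic exponent quadratically in $u$ via the second-order Taylor remainder and the finite variance hypothesis, and then invoke $h \gtrsim \Delta^{1/2}$ so that $\Delta u^{2} \lesssim 1$ on $|u| \leq h^{-1}$. The only cosmetic difference is that you take real parts directly — which makes the drift term $iu\gamma_{c}$ drop out cleanly — whereas the paper bounds $|\psi(u)|$ and carries the harmless $|\gamma_{c}||u|$ term along; both yield the same conclusion.
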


\begin{proof}
Recall that $\varphi_{\Delta}(u) = e^{\Delta \psi(u)}$. From Taylor's theorem, $|e^{iux} - 1 - iux| \leq \frac{u^{2}x^{2}}{2}$
for any $u,x \in \R$, so that $|\psi (u)| \leq \frac{\sigma^{2}u^{2}}{2} + |\gamma_{c}| |u| + \frac{u^{2}}{2} \int_{\R} x^{2} \rho (x) dx \lesssim h^{-2} \lesssim \Delta^{-1}$
uniformly in $|u| \leq h^{-1}$. Therefore, we conclude that
\[
\inf_{|u| \leq h^{-1}}|\varphi_{\Delta}(u)| \geq e^{-\Delta \sup_{|u| \leq h^{-1}}|\psi (u)|} = e^{-O(1)} \gtrsim 1.
\]
This completes the proof. 
\end{proof}

\begin{lemma}
\label{lem: deconvolution kernel}
We have $\| (1+x^{2}+h^{2}x^{4})( | K_{n} | \vee | K_{n}' |)\|_{\R} \lesssim 1$. 
\end{lemma}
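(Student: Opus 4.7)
The plan is to show separately that each of the three quantities $\|K_n\|_\R \vee \|K_n'\|_\R$, $\|x^2 K_n\|_\R \vee \|x^2 K_n'\|_\R$, and $\|h^2 x^4 K_n\|_\R \vee \|h^2 x^4 K_n'\|_\R$ is $O(1)$, which combined give the claim by the triangle inequality. The trivial bound comes from inserting absolute values into the defining integrals: since $\varphi_W$ is supported in $[-1,1]$, bounded, and $|\varphi_\Delta(u/h)| \gtrsim 1$ on $|u| \leq 1$ by Lemma \ref{lem: lower bound on chf}, we get $\|K_n\|_\R, \|K_n'\|_\R \lesssim \int_{-1}^{1} (1+|u|)|\varphi_W(u)|\, du \lesssim 1$.

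To get the decay in $x$, I would integrate by parts in $u$. Writing $F(u) = \varphi_W(u)/\varphi_\Delta(u/h) = \varphi_W(u) e^{\tilde g(u)}$ with $\tilde g(u) = -\Delta \psi(u/h)$, the identities $x^2 e^{-iux} = -\frac{d^2}{du^2}e^{-iux}$ and $x^4 e^{-iux} = \frac{d^4}{du^4}e^{-iux}$, combined with the fact that $F$ and $uF$ are compactly supported so that the boundary terms vanish, give
\[
x^2 K_n(x) = -\frac{1}{2\pi}\int e^{-iux} F''(u)\,du, \quad x^4 K_n(x) = \frac{1}{2\pi}\int e^{-iux} F^{(4)}(u)\,du,
\]
and analogous identities for $K_n'$ with $F$ replaced by $-iuF$. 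Thus it suffices to show $\int |F''| \lesssim 1$ and $\int |F^{(4)}| \lesssim h^{-2}$, plus the analogous bounds with $uF$ in place of $F$, which follow from the same estimates via the Leibniz rule since $u$ is smooth and bounded on the support.

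The core of the proof is therefore pointwise control of the derivatives of $F$ on $[-1,1]$. Differentiating $\psi$ under the integral sign using Condition (i) and the elementary estimate $|e^{iux}-1| \leq |ux|$, one obtains $|\psi'(v)| \lesssim 1+|v|$ and $|\psi^{(k)}(v)| \lesssim 1$ uniformly in $v \in \R$ for $k=2,3,4$; hence on $|u|\leq 1$,
\[
|\tilde g'(u)| \lesssim \Delta/h^2, \quad |\tilde g^{(k)}(u)| \lesssim \Delta/h^k \ \text{for } k=2,3,4.
\]
Using $h \gtrsim \Delta^{1/2}$ (Condition (v)), the first two are $O(1)$ while $|\tilde g^{(3)}| \lesssim 1/h$ and $|\tilde g^{(4)}| \lesssim 1/h^2$. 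Expanding $F^{(k)}$ by Leibniz and Fa\`a di Bruno in the chain rule $\frac{d^k}{du^k} e^{\tilde g}$, using that $|e^{\tilde g}| = |\varphi_\Delta(u/h)|^{-1} \lesssim 1$ on $|u| \leq 1$ and that the derivatives of $\varphi_W$ up to order 4 are uniformly bounded by Condition (iv), one finds $|F''(u)| \lesssim 1$ (so $\int |F''| \lesssim 1$) and, after checking term by term,
\[
|F^{(4)}(u)| \lesssim |\tilde g^{(4)}| + |\tilde g' \tilde g'''| + |\tilde g''|^2 + |\tilde g'|^2|\tilde g''| + |\tilde g'|^4 + \text{lower order} \lesssim h^{-2},
\]
the $\tilde g^{(4)}$ term being the dominant one. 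Multiplying by $h^2$ cancels exactly this factor.

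The main technical obstacle is this bookkeeping of powers of $\Delta$ and $h$ in $F^{(4)}$: individual derivatives of $1/\varphi_\Delta(u/h)$ blow up as $h \to 0$, and one must verify that the worst term $\tilde g^{(4)} \sim \Delta/h^4$ is no worse than $1/h^2$ so that the $h^2$ prefactor suffices. Every other product of derivatives of $\tilde g$ turns out to be smaller by factors of $h$ or $\Delta/h^2 \lesssim 1$, so no cancellation is needed. The argument for $K_n'$ is identical upon replacing $F$ by $-iuF$ throughout.
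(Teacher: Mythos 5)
Your proposal is correct and follows essentially the same route as the paper: integrate by parts to transfer powers of $x$ onto derivatives of $F(u)=\varphi_W(u)/\varphi_\Delta(u/h)$, then reduce to showing $\|F''\|_{[-1,1]} \lesssim 1$ and $\|F^{(4)}\|_{[-1,1]} \lesssim h^{-2}$. The one organizational difference is that you write $1/\varphi_\Delta(u/h)=e^{-\Delta\psi(u/h)}$ and bound derivatives of the exponential via Fa\`a di Bruno, whereas the paper keeps the quotient form and uses the explicit $(f/g)''$ and $(f/g)^{(4)}$ identities together with bounds on $\varphi_\Delta^{(k)}/\varphi_\Delta$; since $\varphi_\Delta^{(k)}/\varphi_\Delta$ are exactly the Bell polynomials in $\Delta\psi^{(j)}$, the two bookkeepings are equivalent and arrive at identical estimates (crucially $|\psi'(v)|\lesssim 1+|v|$ and $|\psi^{(k)}|\lesssim 1$ for $k=2,3,4$, combined with $h\gtrsim\Delta^{1/2}$).
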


\begin{proof}
From the previous lemma, it is not difficult to verify that $\| K_{n} \|_{\R} \lesssim 1$ and $\| K_{n}' \|_{\R} \lesssim 1$. 
By changes of variables, observe that 
\[
K_{n}(x) = \frac{1}{2\pi x^{2}} \int_{\R} e^{-iux} \left ( \frac{\varphi_{W}(u)}{\varphi_{\Delta}(u/h)} \right )'' du = \frac{-1}{2\pi x^{4}} \int_{\R} e^{-iux} \left ( \frac{\varphi_{W}(u)}{\varphi_{\Delta}(u/h)} \right )^{(4)} du. 
\]
Since $\varphi_{W}$ is supported in $[-1,1]$, to show that $\| (x^{2}+h^{2}x^{4}) K_{n} \|_{\R} \lesssim 1$, it suffices to verify that 
\[
\left \| \left ( \frac{\varphi_{W}(\cdot)}{\varphi_{\Delta}(\cdot/h)} \right )'' \right \|_{[-1,1]} \lesssim 1 \quad \text{and} \quad \left \| \left ( \frac{\varphi_{W}(\cdot)}{\varphi_{\Delta}(\cdot/h)} \right )^{(4)} \right \|_{[-1,1]} \lesssim h^{-2}.
\]
To see this, observe that 
\begin{align*}
\psi'(u) &= -\sigma^{2} u + i\gamma_{c} + \int_{\R} (e^{iux} - 1) x\rho (x) dx, \
\psi''(u) = -\sigma^{2} - \int_{\R} e^{iux} x^{2} \rho (x) dx, \\
\psi'''(u) & = -i \int_{\R} e^{iux} x^{3} \rho(x) dx, \ \psi^{(4)}(u) = \int_{\R} e^{iux} x^{4} \rho(x) dx, \\
\varphi_{\Delta}'(u) &= \Delta \psi'(u) \varphi_{\Delta}(u), \ \varphi_{\Delta}''(u) = \Delta\{ \psi''(u) + \Delta (\psi'(u))^{2} \}\varphi_{\Delta}(u), \\ 
\varphi_{\Delta}'''(u) &= \Delta \{ \psi'''(u) + 3 \Delta \psi''(u) \psi'(u) + \Delta^{2} (\psi'(u))^{3} \} \varphi_{\Delta}(u), \\
\varphi_{\Delta}^{(4)}(u) &= \Delta \left [  \psi^{(4)}(u) + \Delta \{ 4\psi'''(u) \psi'(u) + 3 (\psi''(u))^{2} \} + 6 \Delta^{2} \psi''(u) (\psi'(u))^{2} + \Delta^{3} (\psi'(u))^{4} \right ] \varphi_{\Delta}(u).
\end{align*}
This yields that 
\begin{align*}
&\left \| \frac{\varphi_{\Delta}'}{\varphi_{\Delta}} \right \|_{[-h^{-1},h^{-1}]} \lesssim \Delta h^{-1}, \ \left \| \frac{\varphi_{\Delta}''}{\varphi_{\Delta}} \right \|_{[-h^{-1},h^{-1}]} \lesssim \Delta  \{ 1 + \Delta h^{-2} \} \lesssim \Delta, \\
&\left \| \frac{\varphi_{\Delta}'''}{\varphi_{\Delta}} \right \|_{[-h^{-1},h^{-1}]} \lesssim \Delta \{ 1 + \Delta h^{-1} + \Delta^{2}  h^{-3} \} \lesssim \Delta, \\
& \left \| \frac{\varphi_{\Delta}^{(4)}}{\varphi_{\Delta}}  \right \|_{[-h^{-1},h^{-1}]} \lesssim \Delta \{ 1 + \Delta (h^{-1} + 1) + \Delta^{2} h^{-2} + \Delta^{3} h^{-4}\} \lesssim \Delta, 
\end{align*}
where we have used that $h \gtrsim \Delta^{1/2}$. Next, observe the following identities
\begin{align*}
\left (\frac{f}{g} \right)'' &= \frac{f''}{g} - 2 \frac{f'}{g} \frac{g'}{g}  + \frac{f}{g} \left \{ - \frac{g''}{g}   + 2\left ( \frac{g'}{g} \right)^{2} \right \},\\
\left({f \over g}\right)^{(4)} &= {f^{(4)} \over g} - 4{f''' \over g}{g' \over g} + 6{f'' \over g}\left \{ -{ g'' \over g } + 2\left(g' \over  g\right)^{2}\right \} + 4{f' \over g} \left \{ -{g''' \over g} + 6{g' \over  g}{g'' \over  g} - 6\left(g' \over  g\right)^{3}\right \}\\
&\quad +{f \over g} \left \{ -{g^{(4)} \over g} + 8{g' \over  g}{g''' \over  g} + 6\left (g'' \over  g\right)^{2} -36\left(g' \over  g\right)^{2}{g'' \over  g} + 24\left(g' \over  g\right)^{4} \right \}.
\end{align*}
The second identity follows from the following (straightforward but tedious) calculations:
\begin{align*}
\left ( \frac{f}{g} \right )^{(4)} &={f^{(4)} \over g} + 4f''' \left (1 \over g \right)' + 6f''\left (1 \over g \right)'' + 4f'\left (1 \over g \right)'''+ f\left (1 \over g \right)^{(4)},\\
\left (1 \over g \right)' &= -{g' \over g^{2}} = -{1 \over g}{g' \over g}, \\
\left (1 \over g \right)'' &=  -{g'' \over g^{2}} + 2{(g')^{2} \over g^{3}} = {1 \over g}\left \{-{g'' \over g} + 2\left({g' \over g}\right)^{2}\right \}, \\
\left (1 \over g \right)''' &= -{g''' \over g^{2}} + 6{g'g'' \over g^{3}}- 6{(g')^{3} \over g^{4}} = {1 \over g}\left \{ -{g''' \over g}+ 6{g' \over g}{g'' \over g}- 6\left({g' \over g}\right)^{3}\right \},\\
\left (1 \over g \right)^{(4)} &= -{g^{(4)} \over g^{2}} + 8{g'g''' \over  g^{3}} + 6{(g'')^{2} \over  g^{3}} -36{(g')^{2}g'' \over  g^{4}} + 24{(g')^{4} \over  g^{5}} \\
& = {1 \over g}\left \{ -{g^{(4)} \over g}+ 8{g' \over  g}{g''' \over  g}+ 6\left(g'' \over  g\right)^{2} -36\left(g' \over  g\right)^{2}{g'' \over  g} + 24\left(g' \over  g\right)^{4} \right \}.
\end{align*}
Now, noting that $(\varphi_{\Delta}(u/h))^{(k)} = h^{-k} \varphi_{\Delta}^{(k)}(u/h)$ for $k=1,2,3,4$, 
we conclude that 
\begin{align*}
\left \| \left ( \frac{\varphi_{W}(\cdot)}{\varphi_{\Delta}(\cdot/h)} \right )'' \right \|_{[-1,1]} &\lesssim 1 + \Delta h^{-2}+ \{ \Delta h^{-2} + \Delta^{2} h^{-4} \} \lesssim 1, \\
\left \| \left ( \frac{\varphi_{W}(\cdot)}{\varphi_{\Delta}(\cdot/h)} \right )^{(4)} \right \|_{[-1,1]} &\lesssim 1 + \Delta h^{-2} + \{ \Delta h^{-2} + \Delta^{2} h^{-4} \} + \{ \Delta h^{-3} + \Delta^{2} h^{-4} + \Delta^{3} h^{-6} \} \\
&\quad + \{ \Delta h^{-4} + \Delta^{2} h^{-5} + \Delta^{2} h^{-4} + \Delta^{3} h^{-6} + \Delta^{4} h^{-8} \} \\
&\lesssim 1+ \Delta h^{-4} \lesssim h^{-2}.
\end{align*}
Likewise, we have that $\| (x^{2}+h^{2}x^{4}) K_{n}' \|_{\R} \lesssim 1$. This completes the proof. 
\end{proof}

\begin{lemma}
\label{lem: uniform convergence}
For $k=0,1,2$, we have that $\|(\hat{\varphi}_{\Delta} - \varphi_{\Delta})^{(k)}\|_{[-h^{-1},h^{-1}]} = O_{\Pr}\left \{ n^{-1/2}\Delta^{(k\wedge 1)/2}\log h^{-1} \right \}$.
\end{lemma}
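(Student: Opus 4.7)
\medskip

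\noindent\textbf{Proposal.} Decompose the object of interest as a centered i.i.d. sum. For $k\in\{0,1,2\}$, write
\[
\mathcal{G}_{k}(u) := (\hat{\varphi}_{\Delta}-\varphi_{\Delta})^{(k)}(u) = \frac{1}{n}\sum_{j=1}^{n}\zeta_{j,k}(u),\quad \zeta_{j,k}(u) := (iY_{n,j})^{k}e^{iuY_{n,j}} - \Ep[(iY_{n,1})^{k}e^{iuY_{n,1}}],
\]
so that, after splitting into real and imaginary parts, the task is to control the supremum of an empirical process indexed by the function class
\[
\mathcal{F}_{k} = \{ y\mapsto y^{k}\cos(uy),\ y\mapsto y^{k}\sin(uy) : u\in[-h^{-1},h^{-1}] \}
\]
with natural envelope $F_{k}(y)=2|y|^{k}$.

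\medskip

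\noindent\textbf{Key ingredients.} The plan rests on three ingredients. (i) \emph{Variance estimate}: by Lemma~\ref{lem: moment}, $\sup_{u}\Var(\zeta_{1,k}(u))\leq \Ep[Y_{n,1}^{2k}]\lesssim \Delta^{k\wedge 1}$, with the convention $\Delta^{0}=1$. (ii) \emph{Envelope in $L^{2}$}: $\|F_{k}\|_{P_{\Delta},2}^{2} = 4\Ep[Y_{n,1}^{2k}] \lesssim \Delta^{k\wedge 1}$. (iii) \emph{VC-type entropy}: since $|e^{iuy}-e^{iu'y}|\leq |u-u'||y|$, one has the pointwise Lipschitz bound $|y^{k}\cos(uy)-y^{k}\cos(u'y)|\leq |u-u'||y|^{k+1}$ (and analogously for $\sin$); combined with the one-dimensional parameter $u$ ranging over an interval of length $2h^{-1}$, a standard covering argument shows $\mathcal{F}_{k}$ is VC-type with $\log N(\mathcal{F}_{k},\|\cdot\|_{Q,2},\varepsilon\|F_{k}\|_{Q,2}) \lesssim \log(h^{-1}/\varepsilon)$ uniformly in probability measures $Q$ on $\R$.

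\medskip

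\noindent\textbf{Main step.} With (i)--(iii) in hand, apply a maximal inequality for VC-type empirical processes with polynomially-moment-bounded envelope -- for example, Corollary~5.1 of Chernozhukov--Chetverikov--Kato (Ann.\ Statist., 2014), which is tailored to exactly this setup -- to get
\[
\Ep\bigl\|\mathcal{G}_{k}\bigr\|_{[-h^{-1},h^{-1}]} \lesssim \frac{\Delta^{(k\wedge 1)/2}\sqrt{\log h^{-1}}}{\sqrt{n}} + \frac{\|M_{k}\|_{2}\log h^{-1}}{n},
\]
where $M_{k} = \max_{1\leq j\leq n}F_{k}(Y_{n,j})$. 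Bounding $\|M_{k}\|_{2}^{2} \leq n\Ep[F_{k}(Y_{n,1})^{2}] \lesssim n\Delta^{k\wedge 1}$ yields that the second term is of order $n^{-1/2}\Delta^{(k\wedge 1)/2}\log h^{-1}$, which dominates the first and gives the claimed rate (in expectation, and hence in probability by Markov). Upgrading to an $O_{\Pr}$ statement with the same rate is then routine via Talagrand's concentration inequality. An alternative, more elementary route is to discretize $[-h^{-1},h^{-1}]$ on a mesh of size $\sim 1/n$, apply Bernstein's inequality to each grid point (after truncating $Y_{n,j}$ at a level $\sim\sqrt{n\Delta}$ to control the envelope), and bound the residual via the Lipschitz estimate -- this path produces precisely the $n^{-1/2}\Delta^{(k\wedge 1)/2}\log h^{-1}$ rate with the $\log h^{-1}$ (as opposed to $\sqrt{\log h^{-1}}$) coming from the subgaussian-to-subexponential crossover after truncation.

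\medskip

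\noindent\textbf{Main obstacle.} The non-trivial point is the interplay between the small pointwise variance ($\lesssim \Delta$, for $k\geq 1$) and the unbounded envelope $|y|^{k}$, whose sample maximum $\max_{j}|Y_{n,j}|^{k}$ is typically of order one rather than $\sqrt{\Delta}$ (because a single jump contributes $|Y_{n,j}|=O(1)$ with non-negligible probability). A naive Hoeffding union bound loses a factor of $\Delta^{-(k\wedge 1)/2}$ and fails to recover the $\Delta^{(k\wedge 1)/2}$ scaling; the correct rate is obtained only if the truncation level (or equivalently, the $L^{q}$-moment of the envelope used in the maximal inequality) is tuned carefully against the $\Delta$-scaling of the variance, which is the delicate point of the proof.
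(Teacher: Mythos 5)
Your overall setup---decomposing $(\hat{\varphi}_\Delta - \varphi_\Delta)^{(k)}$ into centered trigonometric monomials, computing the variance bound $\sup_u \Var(\zeta_{1,k}(u)) \lesssim \Delta^{k\wedge 1}$ via Lemma~\ref{lem: moment}, and bounding the envelope moment---is correct, and your diagnosis of the ``main obstacle'' (small pointwise variance versus an order-one envelope) is the right one. But the proof has a genuine gap at its core, and it is a different route from the paper's.

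The paper's actual proof is two lines: it cites Theorem~1 of \cite{KaRe10}, which establishes
$\sup_n \Ep[\|\sqrt{n}\,\Delta^{-(k\wedge 1)/2}(\hat{\varphi}_\Delta - \varphi_\Delta)^{(k)} w\|_\R] < \infty$
with the logarithmic weight $w(u) = (\log(e+|u|))^{-1}$, then restricts to $[-h^{-1},h^{-1}]$ and uses $\inf_{|u|\leq h^{-1}}w(u) \sim (\log h^{-1})^{-1}$ to read off the claimed rate. No VC-type entropy computation is attempted; the heavy lifting is delegated to a result built precisely for empirical characteristic function processes of L\'{e}vy increments, whose internal machinery is moment-based chaining tied to the specific increment law.

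The gap in your argument is the uniform entropy claim. You assert that $\sup_Q N(\mathcal{F}_k,\|\cdot\|_{Q,2},\varepsilon\|F_k\|_{Q,2}) \lesssim (h^{-1}/\varepsilon)^{C}$ uniformly over finitely discrete $Q$, based on the pointwise Lipschitz bound $|y^k\cos(uy) - y^k\cos(u'y)| \leq |u-u'||y|^{k+1}$. But the Lipschitz constant $|y|^{k+1}$ grows one power faster than the envelope $F_k(y) = 2|y|^k$, so the ratio $\|\,|\cdot|^{k+1}\|_{Q,2}/\|F_k\|_{Q,2}$ is unbounded as $Q$ ranges over all discrete measures, and the standard Lipschitz-in-parameter covering argument does not produce a $Q$-uniform bound. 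More concretely, the class $\{y\mapsto \cos(uy): |u|\leq h^{-1}\}$ with constant envelope is not VC-type with $h$-dependent constants: taking $Q$ uniform on $m$ rationally independent points chosen large enough that the orbit $u\mapsto (uy_1,\dots,uy_m)\bmod 2\pi$ is $\varepsilon$-dense in the $m$-torus for $|u|\leq h^{-1}$ forces $\log N \gtrsim m\log(1/\varepsilon)$, which exceeds $v\log(Ah^{-1}/\varepsilon)$ for any fixed $A,v$ once $m$ is large. This is the classical obstruction in empirical characteristic function theory and is exactly why the literature (Cs\"{o}rg\H{o}, Yukich, Kappus--Rei\ss) controls the modulus of continuity via moment-level chaining with a logarithmic weight rather than via $Q$-uniform VC entropy. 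Since Corollary~5.1 of \cite{ChChKa14a} requires the uniform entropy hypothesis, the main step of your proof does not go through as written.

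Your ``alternative, more elementary route'' is also not quite right as stated. With truncation level $T_n\sim\sqrt{n\Delta}$, the Bernstein linear term for $k=2$ is $T_n^2(\log N)/n = \Delta\log N$, which dominates the target $n^{-1/2}\Delta^{1/2}\log n$ once $n\Delta\to\infty$ (the regime of the paper). One would instead need a $k$-dependent truncation, e.g.\ $T_n\sim (n^{1/2}\Delta^{(k\wedge1)/2})^{1/k}$, giving $T_n\sim (n\Delta)^{1/4}$ for $k=2$; the residual tail $\Ep[Y_{n,1}^2\mathbf{1}(|Y_{n,1}|>T_n)]\lesssim\Delta/T_n^2=\sqrt{\Delta/n}$ is then of the right order, so the discretization route is repairable, but the proposal as written does not reach the claimed rate at $k=2$.
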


\begin{proof}
This follows from Theorem 1 in \cite{KaRe10}, which shows that for the weight function $w(u) = (\log (e+|u|))^{-1}$,
\[
C_{k}:=\sup_{n} \Ep[ \| \sqrt{n} \Delta^{-(k \wedge 1)/2}(\hat{\varphi}_{\Delta} - \varphi_{\Delta})^{(k)} w \|_{\R}] < \infty
\]
for $k=0,1,2$ under our assumption. Since 
\[
\| \sqrt{n}\Delta^{-(k \wedge 1)/2}(\hat{\varphi}_{\Delta} - \varphi_{\Delta})^{(k)} w \|_{\R} \geq \sqrt{n}\Delta^{-(k \wedge 1)/2} \| (\hat{\varphi}_{\Delta} - \varphi_{\Delta})^{(k)} \|_{[-h^{-1},h^{-1}]} \inf_{|u| \leq h^{-1}} w(u),
\]
we conclude that 
\[
\Ep [\| (\hat{\varphi}_{\Delta} - \varphi_{\Delta})^{(k)} \|_{[-h^{-1},h^{-1}]}] \leq \frac{C_{k}\Delta^{(k \wedge 1)/2}}{\sqrt{n} \inf_{|u| \leq h^{-1}}w(u)} \lesssim n^{-1/2}\Delta^{(k \wedge 1)/2} \log h^{-1}.
\]
The desired result follows from Markov's inequality. 
\end{proof}

 Lemmas  \ref{lem: lower bound on chf} and \ref{lem: uniform convergence} imply that
\[
\inf_{|u| \leq h^{-1}} |\hat{\varphi}_{\Delta} (u)| \geq \inf_{|u| \leq h^{-1}} | \varphi_{\Delta}(u) | - o_{\Pr}(1) \gtrsim 1 - o_{\Pr}(1),
\]
so that with probability approaching one, $\inf_{|u| \leq h^{-1}} | \hat{\varphi}_{\Delta}(u)| > 0$. Hence, with probability approaching one, $\hat{\psi} := \Delta^{-1}\log \hat{\varphi}_{\Delta}$ is well-defined on $[-h^{-1},h^{-1}]$ as the distinguished logarithm \citep[][Theorem 7.6.2]{Ch01}. 

\begin{lemma}
\label{lem: linearization}
We have
\[
\left \|(\hat{\psi} - \psi)'' - \frac{(\hat{\varphi}_{\Delta} - \varphi_{\Delta})''}{\Delta \varphi_{\Delta}} \right \|_{[-h^{-1},h^{-1}]} =o_{\Pr} \{ h (n\Delta h \log n)^{-1/2} \}.
\]
\end{lemma}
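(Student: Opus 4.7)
The plan is to linearize $\hat\psi''$ around $\psi''$ by writing $\hat\varphi_\Delta = \varphi_\Delta(1+R)$ with $R := D/\varphi_\Delta$ and $D := \hat\varphi_\Delta - \varphi_\Delta$. Since $\hat\psi$ is defined via the distinguished logarithm, on $[-h^{-1},h^{-1}]$ we have $\hat\psi - \psi = \Delta^{-1}\log(1+R)$ with probability approaching one. By Lemmas \ref{lem: lower bound on chf} and \ref{lem: uniform convergence}, $\|R\|_{[-h^{-1},h^{-1}]} = O_{\Pr}(n^{-1/2}\log h^{-1}) = o_\Pr(1)$, so w.p.a.~$1$ we have $|1+R|\geq 1/2$ on this interval. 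Differentiating twice gives
\[
(\hat\psi - \psi)'' = \frac{R''}{\Delta(1+R)} - \frac{(R')^{2}}{\Delta(1+R)^{2}},
\]
and from the quotient rule $R'' = D''/\varphi_\Delta - 2D'\varphi_\Delta'/\varphi_\Delta^{2} - D\varphi_\Delta''/\varphi_\Delta^{2} + 2D(\varphi_\Delta')^{2}/\varphi_\Delta^{3}$.

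Subtracting the leading term $D''/(\Delta\varphi_\Delta)$ and rearranging, the residual splits as
\begin{align*}
(\hat\psi - \psi)'' - \frac{D''}{\Delta\varphi_\Delta} &= \underbrace{-\frac{R R''}{\Delta(1+R)}}_{T_{1}} + \underbrace{\frac{1}{\Delta}\Bigl\{-\frac{2D'\varphi_\Delta'}{\varphi_\Delta^{2}} - \frac{D\varphi_\Delta''}{\varphi_\Delta^{2}} + \frac{2D(\varphi_\Delta')^{2}}{\varphi_\Delta^{3}}\Bigr\}}_{T_{2}} \underbrace{-\frac{(R')^{2}}{\Delta(1+R)^{2}}}_{T_{3}}.
\end{align*}
Each remainder piece will then be bounded in sup-norm on $[-h^{-1},h^{-1}]$ using (a) the uniform rates $\|D^{(k)}\|_{[-h^{-1},h^{-1}]} = O_\Pr(n^{-1/2}\Delta^{(k\wedge 1)/2}\log h^{-1})$ from Lemma \ref{lem: uniform convergence}, (b) the ``logarithmic derivative'' estimates $\|\varphi_\Delta^{(k)}/\varphi_\Delta\|_{[-h^{-1},h^{-1}]} \lesssim \Delta(1\vee h^{-(k-1)_{+}})$ already worked out in the proof of Lemma \ref{lem: deconvolution kernel}, and (c) the uniform lower bound $\inf|\varphi_\Delta|\gtrsim 1$. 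Plugging in, $T_{1}$ is $O_\Pr(n^{-1}\Delta^{-1/2}(\log h^{-1})^{2})$; $T_{2}$ is a sum of three pieces of respective orders $O_\Pr(n^{-1/2}\Delta^{1/2}h^{-1}\log h^{-1})$, $O_\Pr(n^{-1/2}\log h^{-1})$, and $O_\Pr(n^{-1/2}\Delta h^{-2}\log h^{-1})$; and $T_{3}$ is $O_\Pr(n^{-1}(\log h^{-1})^{2})$.

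The final step is to verify that every one of these rates is of order $o\bigl(h^{1/2}n^{-1/2}\Delta^{-1/2}(\log n)^{-1/2}\bigr)$, which is the target. Dividing each bound by this target and using $\log h^{-1}\lesssim \log n$, the ratios reduce to expressions of the form $n^{-1/2}h^{-1/2}(\log n)^{O(1)}$, $(\Delta/h)^{1/2}(\log n)^{O(1)}$, or $(\Delta/h)^{3/2}(\log n)^{O(1)}$. The main obstacle is verifying that the slowest of these, namely the $\|D\cdot\varphi_\Delta''/\varphi_\Delta^{2}\|/\Delta$ piece in $T_{2}$, vanishes: this requires combining $h\gtrsim\Delta^{1/2}$ with the upper bandwidth condition $h^{r}\sqrt{n\Delta h\log n}\to 0$ to extract a polynomial decay $\Delta \ll n^{-2/(2r+3)}(\log n)^{O(1)}$, which then beats the polylogarithmic factors; the quadratic terms $T_{1}$ and $T_{3}$ are handled similarly after using $n^{1/2-\delta}\sqrt{\Delta h}\to\infty$ to derive a polynomial lower bound on $h$. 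Once these bandwidth arithmetic checks are carried out, combining the three bounds yields the claimed rate $o_\Pr\{h(n\Delta h\log n)^{-1/2}\}$.
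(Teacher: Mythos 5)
Your proposal is correct and takes essentially the same approach as the paper: the paper's proof also linearizes $\Delta^{-1}\log(\hat\varphi_\Delta/\varphi_\Delta)$ (via a second-order Taylor expansion with mean-value remainder for $F(y)=\log(1+y)$, rather than directly differentiating $\log(1+R)$) and then passes from $\eta''$ to $D''/\varphi_\Delta$ using the product rule with $(1/\varphi_\Delta)'$, $(1/\varphi_\Delta)''$, which is algebraically the same as your quotient-rule expansion of $R''$. Your termwise bounds (including the correct rates $\|\eta^{(k)}\|=O_\Pr(n^{-1/2}\Delta^{(k\wedge 1)/2}\log n)$, the logarithmic-derivative estimates on $\varphi_\Delta^{(k)}/\varphi_\Delta$, and the lower bound on $|\varphi_\Delta|$) and the concluding bandwidth arithmetic (deriving a polynomial decay of $\Delta$ from $h^r\sqrt{n\Delta h\log n}\to 0$ combined with $h\gtrsim\Delta^{1/2}$, and a polynomial lower bound on $nh$ from $n^{1/2-\delta}\sqrt{\Delta h}\to\infty$) match the paper's argument.
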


\begin{proof}
The lemma essentially follows from the proof of Proposition 7 in \cite{NiReSoTr16}. For the sake of completeness, we provide a proof of the lemma. 
Rewrite $(\hat{\psi} - \psi)''$ as $ (\hat{\psi} - \psi)''=\Delta^{-1}(\log(\hat{\varphi}_{\Delta}/\varphi_{\Delta}))''$. 
Let $F(y) = \log(1+y), \eta = (\hat{\varphi}_{\Delta} - \varphi_{\Delta})/\varphi_{\Delta}$, and observe that for any $|u| \leq h^{-1}$, 
\[
(F \circ \eta)''(u) = F'(\eta(u))\eta''(u) + F''(\eta(u))\eta'(u)^{2} = \eta''(u) + F''(\theta \eta(u))\eta(u)\eta''(u) + F''(\eta(u))\eta'(u)^{2}
\]
for some $\theta \in [0,1]$. Since $F''$ is bounded in a neighborhood of the origin and $\| \eta  \|_{[-h^{-1},h^{-1}]} = o_{\Pr}(1)$ (which follows from Lemmas  \ref{lem: lower bound on chf} and \ref{lem: uniform convergence}), we have that 
\begin{align*}
&\left \|(\log(\hat{\varphi}_{\Delta}/\varphi_{\Delta}))'' - \left ( \frac{\hat{\varphi}_{\Delta} - \varphi_{\Delta}}{\varphi_{\Delta}} \right )'' \right \|_{[-h^{-1},h^{-1}]} \\
&\quad = O_{\Pr }\left ( \| \eta \|_{[-h^{-1}, h^{-1}]}\| \eta'' \|_{[-h^{-1}, h^{-1}]} + \| \eta' \|^{2}_{[-h^{-1}, h^{-1}]}\right).
\end{align*}
Next, we shall bound $\| \eta^{(k)}\|_{[-h^{-1}, h^{-1}]}$ for $k=0,1,2$. Since $\| \varphi'_{\Delta}/\varphi_{\Delta} \|_{[-h^{-1},h^{-1}]} \lesssim \Delta h^{-1} \lesssim \Delta^{1/2}$ and $\| \varphi_{\Delta}''/\varphi_{\Delta} \|_{[-h^{-1},h^{-1}]}\lesssim \Delta$, we have that 
\begin{align}
&\left \| \left ( \frac{1}{\varphi_{\Delta}} \right )' \right \|_{[-h^{-1},h^{-1}]} = \left \| \frac{\varphi'_{\Delta}}{\varphi_{\Delta}^{2}} \right \|_{[-h^{-1},h^{-1}]} \lesssim \Delta^{1/2}, \label{eq: derivative 1} \\ 
&\left \| \left ( \frac{1}{\varphi_{\Delta}} \right )'' \right \|_{[-h^{-1},h^{-1}]} =\left \|{\varphi''_{\Delta}\varphi_{\Delta} - 2(\varphi'_{\Delta})^{2} \over \varphi_{\Delta}^{3}} \right\|_{[-h^{-1},h^{-1}]} \lesssim \Delta. \label{eq: derivative 2}
\end{align}
In view of the identities 
\begin{align*}
&\eta' =\left ( \frac{1}{\varphi_{\Delta}} \right )'(\hat{\varphi}_{\Delta} - \varphi_{\Delta}) +\left ( \frac{1}{\varphi_{\Delta}} \right )(\hat{\varphi}_{\Delta} - \varphi_{\Delta})', \\
&\eta'' = \left ( \frac{1}{\varphi_{\Delta}} \right )''(\hat{\varphi}_{\Delta} - \varphi_{\Delta}) + 2\left ( \frac{1}{\varphi_{\Delta}} \right )'(\hat{\varphi}_{\Delta} - \varphi_{\Delta})' + \left ( \frac{1}{\varphi_{\Delta}} \right )(\hat{\varphi}_{\Delta} - \varphi_{\Delta})'', 
\end{align*}
we conclude from Lemma \ref{lem: uniform convergence} that $\| \eta^{(k)} \|_{[-h^{-1},h^{-1}]} = O_{\Pr} (n^{-1/2}\Delta^{(k \wedge 1)/2}\log n )$
for $k=0,1,2$, which yields that 
\[
 \| \eta \|_{[-h^{-1}, h^{-1}]}\| \eta''\|_{[-h^{-1}, h^{-1}]} \bigvee \| \eta' \|^{2}_{[-h^{-1}, h^{-1}]} = O_{\Pr}\{ n^{-1}\Delta^{1/2}(\log n)^{2} \}. 
\]

Finally, observe that 
\[
\left ( \frac{1}{\varphi_{\Delta}} \right ) (\hat{\varphi}_{\Delta} - \varphi_{\Delta})''=\eta'' -\left ( \frac{1}{\varphi_{\Delta}} \right ) ''(\hat{\varphi}_{\Delta} - \varphi_{\Delta}) - 2\left ( \frac{1}{\varphi_{\Delta}} \right ) '(\hat{\varphi}_{\Delta} - \varphi_{\Delta})'.
\]
From the bounds (\ref{eq: derivative 1}) and (\ref{eq: derivative 2}), together with Lemma \ref{lem: uniform convergence}, we have that
\[
\left \| \left ( \frac{1}{\varphi_{\Delta}} \right ) ''(\hat{\varphi}_{\Delta} - \varphi_{\Delta}) \right \|_{[-h^{-1},h^{-1}]} \bigvee \left \| \left ( \frac{1}{\varphi_{\Delta}} \right )'(\hat{\varphi}_{\Delta} - \varphi_{\Delta})' \right \|_{[-h^{-1},h^{-1}]}= O_{\Pr}(n^{-1/2}\Delta \log n ).
\]
Taking these together, we conclude that 
\begin{equation}
\left \|(\log(\hat{\varphi}_{\Delta}/\varphi_{\Delta}))''- \frac{(\hat{\varphi}_{\Delta} - \varphi_{\Delta})''}{\varphi_{\Delta}} \right \|_{[-h^{-1},h^{-1}]} =O_{\Pr} \{ n^{-1/2}\Delta \log n + n^{-1}\Delta^{1/2} (\log n)^{2} \}. 
\label{eq: linearization}
\end{equation}
We shall verify that the right hand side is $o_{\Pr} \{ \Delta h (n\Delta h \log n)^{-1/2} \}$. Observe that 
\[
\frac{n^{-1/2}\Delta \log n + n^{-1}\Delta^{1/2} (\log n)^{2}}{\Delta h (n\Delta h \log n)^{-1/2}} = \Delta^{1/2} h^{-1/2} (\log n)^{3/2} + n^{-1/2}  h^{-1/2} (\log n)^{5/2}. 
\]
Since $\sqrt{n h} \gg n^{\delta} \Delta^{-1/2}$, we have that $n^{-1/2} h^{-1/2} (\log n)^{5/2} \ll n^{-\delta}\Delta^{1/2} (\log n)^{5/2} \ll 1$. On the other hand, as $h \gtrsim \Delta^{1/2}$, $\Delta^{1/2} h^{-1/2} (\log n)^{3/2} \lesssim \Delta^{1/4} (\log n)^{3/2}$. 
Since $1 \gg h^{r} \sqrt{n \Delta h \log n} \gtrsim \Delta^{r/2 + 3/4} \sqrt{n \log n}$, we have that $\Delta \ll (n \log n)^{-1/(r+3/2)}$, which implies that $\Delta^{1/4} (\log n)^{3/2} \ll 1$. 
This completes the proof. 
\end{proof}

\begin{lemma}
\label{lem: variance lower bound}
Recall that $s_{n}^{2}(x) = \Var (Y_{n,1}^{2} K_{n}((x-Y_{n,1})/h))$. Then $\inf_{x \in I} s_{n}^{2}(x) \gtrsim \Delta h$
for sufficiently large $n$. 
\end{lemma}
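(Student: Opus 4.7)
I would decompose
\[
s_n^2(x) = A_n(x) - B_n(x)^2,
\]
where $A_n(x) := \Ep[Y_{n,1}^4 K_n^2((x-Y_{n,1})/h)]$ and $B_n(x) := \Ep[Y_{n,1}^2 K_n((x-Y_{n,1})/h)]$, and aim to show that $\inf_{x \in I} A_n(x) \gtrsim \Delta h$ while $\sup_{x \in I} |B_n(x)| = O(\Delta)$. Since $\Delta/h \le \Delta^{1/2} \to 0$ by Condition (v) in Assumption \ref{as: assumption 1}, the squared-mean term satisfies $B_n(x)^2 = O(\Delta^2) = o(\Delta h)$, and the conclusion follows.

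For the lower bound on $A_n(x)$, the key is to convert the density bound in Condition (ii) into a density lower bound on $P_\Delta$ on a useful set. By Condition (ii), $y^4 P_{\Delta,b}(dy)$ has a Lebesgue density $g_{\Delta,b}$ with $\inf_{y \in I^{\varepsilon_0}} g_{\Delta,b}(y) \gtrsim \Delta$. Since $0 \notin I^{\varepsilon_0}$ and $I^{\varepsilon_0}$ is bounded, dividing by $y^4$ shows that $P_{\Delta,b}$ restricted to $I^{\varepsilon_0}$ has Lebesgue density $g_{\Delta,b}(y)/y^4 \gtrsim \Delta$. Because $P_\Delta$ is the pushforward of $P_{\Delta,b}$ under the translation $z \mapsto z + b\Delta$, and $I^{\varepsilon_0/2} \subset I^{\varepsilon_0} + b\Delta$ whenever $|b|\Delta \le \varepsilon_0/2$, this density bound transfers: $P_\Delta$ has Lebesgue density $\gtrsim \Delta$ on $I^{\varepsilon_0/2}$ for all sufficiently large $n$. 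Restricting the integral defining $A_n(x)$ to $y \in I^{\varepsilon_0/2}$, using that $y^4$ is bounded below by a positive constant on this set, and changing variables $u = (x-y)/h$, I obtain
\[
A_n(x) \gtrsim \Delta h \int_{-\varepsilon_0/(2h)}^{\varepsilon_0/(2h)} K_n^2(u)\, du
\]
uniformly in $x \in I$, since for any $x \in I$ the set $(x - I^{\varepsilon_0/2})/h$ contains $[-\varepsilon_0/(2h), \varepsilon_0/(2h)]$. Parseval's identity together with $|\varphi_\Delta| \le 1$ yields $\int_\R K_n^2 \ge \|W\|_2^2 > 0$, and Lemma \ref{lem: deconvolution kernel} gives the tail bound $\int_{|u| > M} K_n^2(u)\, du \lesssim M^{-3}$. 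Taking $M = \varepsilon_0/(2h) \to \infty$ yields $\int_{-M}^{M} K_n^2 \ge \|W\|_2^2/2$ for large $n$, and hence $A_n(x) \gtrsim \Delta h$ uniformly in $x \in I$.

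For the upper bound on $|B_n(x)|$, I use the identity $\Ep[Y_{n,1}^2 e^{ivY_{n,1}}] = -\varphi_\Delta''(v)$ together with Fourier inversion for $K_n$ and a change of variables to obtain
\[
B_n(x) = -\frac{h}{2\pi} \int e^{-ivx} \frac{\varphi_\Delta''(v)}{\varphi_\Delta(v)} \varphi_W(vh)\, dv.
\]
On $[-h^{-1}, h^{-1}]$ (where $v \mapsto \varphi_W(vh)$ is supported), the ratio is $\varphi_\Delta''/\varphi_\Delta = \Delta \psi'' + \Delta^2 (\psi')^2 = O(\Delta)$ by elementary estimates already used in the proof of Lemma \ref{lem: deconvolution kernel} (namely $\|\psi''\|_\R \lesssim 1$, $|\psi'(v)| \lesssim 1 + |v|$, and $\Delta \lesssim h^2$). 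Combined with $\int |\varphi_W(vh)|\, dv = h^{-1} \int |\varphi_W| \lesssim h^{-1}$, this gives $|B_n(x)| \lesssim \Delta$ uniformly in $x \in I$, so $s_n^2(x) \ge c_1 \Delta h - C\Delta^2 \ge (c_1/2)\Delta h$ for all sufficiently large $n$.

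The main obstacle is the density transfer: Condition (ii) is stated for $y^4 P_{\Delta,b}$ rather than for $P_\Delta$ itself, because $P_\Delta$ may have an atom (e.g.\ at $b\Delta$ for a jump-diffusion with drift), so one must carefully exploit the freedom to shift by $b\Delta$ to locate the atomic part at the origin and then invert the $y^4$ factor on a set bounded away from $0$.
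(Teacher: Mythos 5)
Your proof is correct, and it takes a genuinely different route to the lower bound on $A_n(x):=\Ep[Y_{n,1}^4K_n^2((x-Y_{n,1})/h)]$. The paper's proof never passes through a density lower bound for $P_\Delta$: instead, it writes $Y_{n,1}=(Y_{n,1}-b\Delta)+b\Delta$, invokes the elementary inequality $(a+c)^4\ge a^4/16 - c^4$, and thereby reduces directly to the density $g_{\Delta,b}$ of $y^4P_{\Delta,b}(dy)$ (at the cost of a negligible $b^4\Delta^4\|K_n\|_\R^2$ error). You instead convert Condition (ii) into a Lebesgue density lower bound $\gtrsim\Delta$ for the restriction of $P_\Delta$ to $I^{\varepsilon_0/2}$ (legitimate, since $y^4$ is bounded above and below there, and the possible atom of $P_\Delta$ at $b\Delta$ leaves $I^{\varepsilon_0/2}$ once $|b|\Delta<\varepsilon_0/2$), and then restrict the defining integral to $I^{\varepsilon_0/2}$. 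The two ideas are morally equivalent — both ultimately need the same truncation argument, using $\int_\R K_n^2\ge\frac{1}{2\pi}\|\varphi_W\|_{L^2}^2>0$ from Plancherel and $|\varphi_\Delta|\le1$ together with the tail bound $K_n^2(u)\lesssim u^{-4}$ from Lemma \ref{lem: deconvolution kernel}, and both must ensure that the relevant arguments of $g_{\Delta,b}$ (resp.\ the density of $P_\Delta$) stay inside $I^{\varepsilon_0}$ (resp.\ $I^{\varepsilon_0/2}$) for large $n$. The paper's route is marginally cleaner because it avoids the explicit ``transfer'' step, but yours is arguably more transparent about where the density positivity is actually used. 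For the squared-mean term $B_n(x)^2$, you do considerably more work than necessary: the paper simply bounds $|B_n(x)|\le\|K_n\|_\R\,\Ep[Y_{n,1}^2]\lesssim\Delta$ using Lemma \ref{lem: deconvolution kernel} and Lemma \ref{lem: moment}, whereas your Fourier computation via $\varphi_\Delta''/\varphi_\Delta=\Delta\psi''+\Delta^2(\psi')^2$ gives the same $O(\Delta)$ bound but needs the characteristic-exponent estimates. Both are valid; the crude bound suffices since $\Delta^2\ll\Delta h$ under Condition (v).
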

\begin{proof}
Since $\| K_{n} \|_{\R} \lesssim 1$, we have that $\left ( \Ep[Y_{n,1}^{2}K_{n}((x-Y_{n,1})/h)] \right)^{2} \leq \| K_{n} \|_{\R}^{2} (\Ep[Y_{n,1}^{2}])^{2} \lesssim \Delta^{2} \ll \Delta h$.
Next, observe that  $(x+y)^{4} \geq x^{4}/16 - y^{4}$ for any $x,y \in \R$. Using this inequality and recalling  that $Y_{n,1} - b\Delta$ has distribution $P_{\Delta,b}$ with $P_{\Delta,b}(dy) = g_{\Delta,b} (y)dy$, we have that
\begin{align*}
&\Ep[Y_{n,1}^{4}K_{n}^{2}((x-Y_{n,1})/h)] = \Ep [ (Y_{n,1}-b\Delta + b\Delta)^4 K_{n}^{2}((x-(Y_{n,1}-b\Delta) - b\Delta)/h)] \\
&\quad \geq \frac{1}{16} \int_{\R} K_{n}^{2}((x-b\Delta-y)/h) g_{\Delta,b}(y) dy - b^{4}\Delta^{4}\| K_{n} \|_{\R}^{2} \\
&\quad = \frac{h}{16} \int_{\R}K^{2}_{n}(y)g_{\Delta,b} (x-b\Delta - yh) dy- b^{4}\Delta^{4}\| K_{n} \|_{\R}^{2}.
\end{align*}
Since $\Delta^{4} \ll \Delta h$,  it suffices to verify that 
\[
\inf_{x \in I^{|b|\Delta}} \int_{\R}K^{2}_{n}(y)g_{\Delta,b} (x-yh) dy \gtrsim \Delta
\]
for sufficiently large $n$. 
Since $|\varphi_{\Delta}| \leq 1$ and by Plancherel's theorem, we have that 
\[
\int_{\R} K_{n}^{2}(y) dy = \frac{1}{2\pi} \int_{\R} \left | \frac{\varphi_{W}(u)}{\varphi_{\Delta}(u/h)} \right |^{2} du
\geq \frac{1}{2\pi} \int_{\R} |\varphi_{W}(u)|^{2} du =: \underline{c}.
\]
From Lemma \ref{lem: deconvolution kernel}, we have that $\| (1+y^{2}) K_{n} \|_{\R} \lesssim 1$, so that for $m > 0$, 
\[
\int_{[-m,m]^{c}} K_{n}^{2}(y) dy \lesssim \int_{[-m,m]^{c}} y^{-4} dy
\]
up to a constant independent of $(n,m)$. The right hand side is approaching zero as $m \to \infty$, so that by taking $m$ sufficiently large, we have that 
\[
\int_{-m}^{m} K_{n}^{2}(y) dy \geq \int_{\R} K_{n}^{2}(y) dy -\frac{\underline{c}}{2} \geq \frac{\underline{c}}{2}
\]
for all $n$. Hence, for sufficiently large $n$ such that $|b|\Delta + m h \leq \varepsilon_{0}$, 
\[
\inf_{x \in I^{|b|\Delta}} \int_{\R} K_{n}^{2}(y) g_{\Delta,b}(x-yh) dy \geq \inf_{x \in I^{|b|\Delta}} \int_{-m}^{m} K_{n}^{2}(y) g_{\Delta,b}(x-yh) dy \geq  \frac{\underline{c}}{2} \inf_{x \in I^{\varepsilon_{0}}} g_{\Delta,b}(x) \gtrsim \Delta
\]
by Condition (ii) in Assumption \ref{as: assumption 1}. 
This completes the proof. 
\end{proof} 

Consider the function class
\[
\mF_{n} =\left \{ y \mapsto \frac{y^{2}}{s_{n}(x)} K_{n}\left ( \frac{x-y}{h} \right ): x \in I \right \}.
\]
Observe that 
\[
y^{2}K_{n}\left ( \frac{x-y}{h} \right ) =  h^{2} \left ( \frac{x-y}{h} \right )^{2} K_{n}\left ( \frac{x-y}{h} \right ) -2x h   \left ( \frac{x-y}{h} \right ) K_{n}\left ( \frac{x-y}{h} \right ) + x^{2} K_{n}\left ( \frac{x-y}{h} \right ). 
\]
Since $\| (1+y^{2}) K_{n} \|_{\R} \lesssim 1$ by Lemma \ref{lem: deconvolution kernel} and $I$ is compact, each of the three terms on the right hand side is bounded (as a function of $y$) uniformly in $n$ and $x \in I$. 
Choose  constants $D_{1},D_{2} > 0$ independent of $n$ such that $\| (1+y^{2}) K_{n}\|_{\R} \leq D_{1}$ and $\| 1/s_{n} \|_{I} \leq D_{2}/\sqrt{\Delta h}$ (cf. Lemma \ref{lem: variance lower bound}). Without loss of generality, we may assume that $h \leq 1$. Then functions in $\mF_{n}$ are bounded by
\[
(1+2\| x \|_{I} + \| x^{2} \|_{I}) D_{1}D_{2}/\sqrt{\Delta h} \lesssim 1/\sqrt{\Delta h}. 
\]
The next lemma provides a bound on the uniform covering number for the function class $\mF_{n}$. 
\begin{lemma}
\label{lem: VC type}
There exist constants $A, v > 0$ independent of $n$ such that 
\begin{equation}
\sup_{Q} N(\mF_{n},\| \cdot \|_{Q,2}, \varepsilon/\sqrt{\Delta h} ) \leq (A/\varepsilon)^{v}, \ 0 < \forall \varepsilon \leq 1, 
\label{eq: VC type}
\end{equation}
where $\sup_{Q}$ is taken over all finitely discrete distributions on $\R$.
\end{lemma}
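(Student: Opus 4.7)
The plan is to exploit the decomposition of $y^2 K_n((x-y)/h)$ stated just before the lemma to reduce $\mF_n$ to three shift-classes of \emph{uniformly bounded} base kernels, apply Lemma~1 of \cite{KaSa16} to each, and then close under the usual VC-type permanence properties. Concretely, introduce
\[
\phi_{n,0}(z) = K_n(z),\quad \phi_{n,1}(z) = zK_n(z),\quad \phi_{n,2}(z) = z^2K_n(z), \quad z \in \R,
\]
and the corresponding shift-classes $\mG_{n,j} := \{ y \mapsto \phi_{n,j}((x-y)/h) : x \in I\}$, so that the identity displayed just before the lemma becomes
\[
y^2 K_n((x-y)/h) = h^2 \phi_{n,2}((x-y)/h) - 2xh\,\phi_{n,1}((x-y)/h) + x^2\phi_{n,0}((x-y)/h).
\]

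The preparatory step establishes the envelope and the necessary uniform-in-$n$ bounds on the base kernels. By Lemma~\ref{lem: deconvolution kernel}, $\|\phi_{n,j}\|_\R \lesssim 1$ for $j=0,1,2$ uniformly in $n$, so the identity above together with boundedness of $I$ and $h \leq 1$ gives a constant bound on $y^2K_n((x-y)/h)$ uniformly in $n$ and $x \in I$. Dividing by $s_n(x)$, which is bounded below by a constant multiple of $\sqrt{\Delta h}$ thanks to Lemma~\ref{lem: variance lower bound}, then produces the envelope $C/\sqrt{\Delta h}$ for $\mF_n$ required in (\ref{eq: VC type}).

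The decisive step is to verify VC-type for each $\mG_{n,j}$ with parameters independent of $n$. With $R_n(u) := \varphi_W(u)/\varphi_\Delta(u/h)$, the Fourier transforms of $\phi_{n,j}$ are, up to signs and factors of $i$, $R_n^{(j)}$, all supported in the fixed interval $[-1,1]$. The intermediate calculations inside the proof of Lemma~\ref{lem: deconvolution kernel} provide $\| R_n^{(j)}\|_{[-1,1]} \lesssim 1$ uniformly in $n$ for $j=0,1,2$. Hence each $\phi_{n,j}$ is a real-valued, uniformly bounded function whose Fourier transform is supported in a fixed compact set with a uniform-in-$n$ sup-norm bound, which is precisely the structural input required for Lemma~1 of \cite{KaSa16}. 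That lemma therefore yields $\sup_Q N(\mG_{n,j}, \|\cdot\|_{Q,2}, \varepsilon) \leq (A_j/\varepsilon)^{v_j}$ for $\varepsilon \in (0,1]$ with constants $(A_j, v_j)$ independent of $n$.

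Finally, $\mF_n$ is a sum $\sum_{j=0}^2 a_j(x) \mG_{n,j}/s_n(x)$ with coefficient functions $a_0(x)=x^2,\ a_1(x)=-2xh,\ a_2(x)=h^2$, each Lipschitz and bounded on $I$, while $1/s_n(x)$ is bounded by a constant multiple of $1/\sqrt{\Delta h}$. Standard permanence results for VC-type classes (closure under finite sums and under products with bounded Lipschitz index-functions; see e.g.\ Section~2.6 of \cite{vaWe96}) combine the three VC-type bounds for the $\mG_{n,j}$ into the desired bound (\ref{eq: VC type}). The main obstacle, and the whole purpose of the decomposition, is to keep the VC parameters $(A,v)$ independent of $n$: a direct application of Lemma~1 of \cite{KaSa16} to the original class would force the envelope $y\mapsto \mathrm{const}\cdot y^2$, which in turn would demand stronger moment hypotheses on $\rho$ later in the proofs of Theorems~\ref{thm: Gaussian approximation} and \ref{thm: bootstrap}; the present route passes through bounded base kernels at the cost of an additional Lipschitz recombination in the index $x \in I$.
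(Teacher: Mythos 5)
Your decomposition of $y^2K_n((x-y)/h)$ into the three shift-classes $\mG_{n,j}$ and the recombination step are exactly the paper's strategy, and the envelope argument via Lemmas~\ref{lem: deconvolution kernel} and \ref{lem: variance lower bound} is also correct. The divergence, and the gap, is in how you certify VC-type for the auxiliary classes. The paper does this through Lemma~\ref{lem: BV} (Gin\'e--Nickl, Lemma~3.2.16), which reduces the task to bounding $\mathsf{TV}(K_{n,\ell})$ for $K_{n,\ell}(y)=h^\ell y^\ell K_n(y)$, $\ell=0,1,2$; this is where the weighted pointwise estimate $\|(1+y^2+h^2y^4)(|K_n|\vee|K_n'|)\|_\R\lesssim 1$ is essential, since the factors $h^\ell$ and the $h^2y^4$ weight are precisely what absorb the $h^{-2}$ growth of $R_n^{(4)}$ when one integrates $|K_{n,\ell}'|$. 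You instead invoke Lemma~1 of \cite{KaSa16} and assert that its hypothesis is only ``real-valued, uniformly bounded, Fourier transform supported in a fixed compact set with a uniform sup-norm bound.'' That reading is not substantiated and is almost certainly too weak: a bounded compactly supported Fourier transform alone (think of $\mathrm{sinc}$, with $\varphi=\pi 1_{[-1,1]}$, which is not of bounded variation) does not yield a VC-type shift class by the standard route, and any lemma of this type that rests on the bounded-variation criterion will require control on derivatives of the Fourier transform. But for $\phi_{n,2}$, whose Fourier transform is $-R_n''$, the relevant higher derivatives $R_n'''$ and $R_n^{(4)}$ are only $O(h^{-1})$ and $O(h^{-2})$ under Assumption~\ref{as: assumption 1}(v) (the proof of Lemma~\ref{lem: deconvolution kernel} only shows $\|R_n^{(4)}\|_{[-1,1]}\lesssim h^{-2}$), so the uniformity in $n$ that your argument needs at that point is not available. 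This is exactly the difficulty the paper flags in its discussion preceding the lemma, and why its proof routes through the bounded-variation estimate on $K_{n,\ell}$ itself rather than through a Fourier-side hypothesis. To close your argument you would either have to establish that Lemma~1 of \cite{KaSa16} really does require nothing beyond a sup-norm bound on the Fourier transform (unlikely), or replace that invocation by the total-variation computation, i.e., show $\int_\R |K_{n,\ell}'(y)|\,dy\lesssim 1$ uniformly in $n$ using Lemma~\ref{lem: deconvolution kernel}, and then apply Lemma~\ref{lem: BV} as the paper does.
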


The proof of this lemma relies on the following lemma.

\begin{lemma}[\cite{GiNi16}, Lemma 3.2.16]
\label{lem: BV}
Let $f: \R \to \R$ be a function of bounded variation, i.e., 
\[
\mathsf{TV}(f) :=\sup \left \{ \sum_{j=1}^{N} | f(x_{j}) - f(x_{j-1}) | : -\infty < x_{0} < \cdots < x_{N} <\infty, N=1,2,\dots \right \} < \infty,
\]
and consider the function class $\mF = \{ x \mapsto f(ax+b) : a, b \in \R \}$. Then there exist universal constants $A,v > 0$ such that 
\[
\sup_{Q} N(\mF, \| \cdot \|_{Q,2}, \varepsilon \mathsf{TV} (f)) \leq \left ( \frac{A}{\varepsilon} \right )^{v}, \ 0 < \forall \varepsilon \leq 1,
\]
where $\sup_{Q}$ is taken over all finitely discrete distributions on $\R$.
\end{lemma}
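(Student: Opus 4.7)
The natural strategy is to reduce to the monotone case via the Jordan decomposition and then show that translated--dilated monotone functions form a VC subgraph class, after which the covering number bound follows from the standard entropy estimate for VC subgraph classes.

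First, I would write $f = f_{+} - f_{-}$ with $f_{\pm}$ nondecreasing, right-continuous, nonnegative, and bounded by $V:=\mathsf{TV}(f)$. Concretely, letting $V_{f}(x) = \mathsf{TV}(f\vert_{(-\infty,x]})$ (which exists and is nondecreasing on $\R$ since $f$ is BV, with $V_{f}(+\infty) = V$), one may take
\[
f_{\pm}(x) = \tfrac{1}{2}\bigl(V_{f}(x) \pm (f(x) - f(-\infty))\bigr),
\]
so that $0 \leq f_{\pm} \leq V$ and both are nondecreasing. Then $\mF \subseteq \mF_{+} \ominus \mF_{-}$ in the sense that every element of $\mF$ is a \emph{matched} difference from $\mF_{\pm} := \{x \mapsto f_{\pm}(ax+b) : a,b \in \R\}$; combining covers via the triangle inequality gives
\[
N\bigl(\mF, \|\cdot\|_{Q,2}, \varepsilon V\bigr) \leq N\bigl(\mF_{+}, \|\cdot\|_{Q,2}, \varepsilon V/2\bigr)\cdot N\bigl(\mF_{-}, \|\cdot\|_{Q,2}, \varepsilon V/2\bigr).
\]

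Next, I would show that each $\mF_{\pm}$ is a VC subgraph class with VC dimension bounded by a universal constant. Fix a nondecreasing right-continuous $g: \R \to [0,V]$, and consider the subgraphs $C_{a,b} = \{(x,t) \in \R \times [0,V] : t \leq g(ax+b)\}$. Using the generalized right-continuous inverse $g^{-1}(t) := \inf\{y : g(y) \geq t\}$, monotonicity gives $t \leq g(ax+b) \iff ax+b \geq g^{-1}(t)$ (with the usual conventions at the boundary). Given $n$ points $(x_{i},t_{i})_{i=1}^{n}$, writing $y_{i} := g^{-1}(t_{i})$, the subset picked out by $C_{a,b}$ is
\[
\{i : ax_{i} + b - y_{i} \geq 0\},
\]
which depends on $(a,b) \in \R^{2}$ only through the signs of $n$ affine functionals. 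The arrangement of these $n$ lines in the parameter plane has $O(n^{2})$ cells, so at most $O(n^{2})$ distinct subsets are realizable. Hence the VC dimension of $\{C_{a,b}\}$ is bounded by a universal constant $v_{0}$, independent of $g$.

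Finally, I would invoke the classical uniform entropy bound for VC subgraph classes with constant envelope (e.g., Theorem 2.6.7 in van der Vaart and Wellner, 1996): there exist universal constants $A_{0}, v_{0}$ with
\[
\sup_{Q} N\bigl(\mF_{\pm}, \|\cdot\|_{Q,2}, \varepsilon V\bigr) \leq (A_{0}/\varepsilon)^{v_{0}}, \quad 0 < \varepsilon \leq 1.
\]
Combining with the product bound above yields the claim with $v = 2v_{0}$ and $A = 2A_{0}$. The main obstacle is the second step: one must carefully handle the generalized inverse at jumps and flat pieces of $g$ (and separately address the orientation-reversing case $a < 0$, which simply swaps the role of $g$ and $V - g(-\cdot)$, another monotone function), so that the reduction to a half-plane shattering problem in the $(a,b)$ parameter space is valid for \emph{all} BV functions, not merely those with continuous strictly increasing variation.
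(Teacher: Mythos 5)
The paper does not prove this lemma --- it is quoted verbatim from Gin\'{e} and Nickl (2016, Lemma 3.2.16) --- and your argument is a correct reconstruction of the standard proof of that cited result: Jordan decomposition into bounded monotone pieces, the VC-subgraph property of $\{g(a\cdot+b) : a,b \in \R\}$ for monotone $g$ via the affine-sign/arrangement count in the $(a,b)$-plane, the uniform entropy bound for VC-subgraph classes with constant envelope, and the product bound for matched differences. Two small points: your decomposition actually gives $f = f(-\infty) + f_{+} - f_{-}$, but the common additive constant is the same for every element of $\mF$ and so does not affect $L^{2}(Q)$-covering numbers; and the generalized-inverse technicality you flag is real but routine, since for any monotone $g$ the level set $\{y : g(y) \geq t\}$ is a half-line, open or closed at its endpoint depending on $t$ and $g$ but not on $(a,b)$, so the subsets picked out are still governed by the signs of $n$ affine functionals of $(a,b)$ (and for the same reason the case $a<0$, and $a=0$, need no separate treatment).
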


\begin{proof}[Proof of Lemma \ref{lem: VC type}]
Consider the auxiliary function classes
\[
\mathcal{G}_{n,\ell} = \left \{ y \mapsto h^{\ell} (ay + b)^{\ell}  K_{n}(ay + b) :  a,b \in \R \right \}, \ \ell=0,1,2. 
\]
Since $\mF_{n} \subset \{ \frac{g_{2} -2x g_{1} + x^{2} g_{0}}{s_{n}(x)} : g_{\ell} \in \mG_{n,\ell}, \ell=0,1,2; x \in I \}$, $I$ is compact, and $\| 1/s_{n} \|_{I} \lesssim 1/\sqrt{\Delta h}$,  the desired conclusion follows by verifying that there exist constants $A_{1},v_{1} > 0$ independent of $n$ such that 
\[
\sup_{Q} N(\mG_{n,\ell},\| \cdot \|_{Q,2}, \varepsilon ) \leq \left ( \frac{A_{1}}{\varepsilon} \right )^{v_{1}},  \ 0 < \forall \varepsilon \leq 1
\]
for $\ell=0,1,2$. To this end, in view of Lemma \ref{lem: BV}, it suffices to verify that, for $K_{n,\ell} (y) := y^{\ell} h^{\ell} K_{n}(y), \ell=0,1,2$, the total variations of $K_{n,\ell}$ are bounded in $n$, i.e., 
\[
\mathsf{TV} (K_{n,\ell}) = \int_{\R} | K_{n,\ell}'(y) | dy  \lesssim 1.
\]
This follows from observations that $K_{n,1}'(y) = h (K_{n}(y) + y K_{n}'(y)),  K_{n,2}'(y) = h^{2} (2y K_{n}(y) + y^{2}K_{n}'(y))$, 
and  $\| (1+y^{2} + h^{2}y^{4}) (|K_{n}| \vee |K_{n}'|) \|_{\R} \lesssim 1$ by Lemma \ref{lem: deconvolution kernel}.
\end{proof}

\begin{lemma}
\label{lem: bias}
Let $\rho_{\sharp}(x) = x^{2}\rho(x)$. Then $\| \rho_{\sharp}*(h^{-1}W(\cdot/h)) - \rho_{\sharp}  \|_{\R} \lesssim h^{r}$,
where $*$ denotes the convolution. 
\end{lemma}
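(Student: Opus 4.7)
The plan is a standard bias-of-kernel-estimator computation, using the vanishing-moment properties of $W$ in Condition (iv) together with the H\"older smoothness of $\rho_{\sharp}=x^{2}\rho$ in Condition (iii). After the change of variables $u = y/h$, the convolution becomes
\[
(\rho_{\sharp}*(h^{-1}W(\cdot/h)))(x) = \int_{\R} W(u) \rho_{\sharp}(x-hu)\, du,
\]
so, using $\int_{\R} W(u)\,du = 1$,
\[
(\rho_{\sharp}*(h^{-1}W(\cdot/h)))(x) - \rho_{\sharp}(x) = \int_{\R} W(u)\bigl\{\rho_{\sharp}(x-hu) - \rho_{\sharp}(x)\bigr\}\,du.
\]

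Next I would apply Taylor's theorem with integral remainder to $\rho_{\sharp}(x-hu)$ around $x$, using that $\rho_{\sharp}$ is $p$-times differentiable by Condition (iii). This gives
\[
\rho_{\sharp}(x-hu) = \sum_{\ell=0}^{p}\frac{(-hu)^{\ell}}{\ell!}\rho_{\sharp}^{(\ell)}(x) + R(x,hu),
\]
where the remainder has the form
\[
R(x,hu) = \frac{(-hu)^{p}}{(p-1)!}\int_{0}^{1}(1-t)^{p-1}\bigl\{\rho_{\sharp}^{(p)}(x-thu) - \rho_{\sharp}^{(p)}(x)\bigr\}\,dt
\]
(with the integral-form remainder adjusted so that the $\ell=p$ term of the Taylor polynomial is subtracted out). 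By Condition (iii), $\rho_{\sharp}^{(p)}$ is $(r-p)$-H\"older continuous, so $|\rho_{\sharp}^{(p)}(x-thu) - \rho_{\sharp}^{(p)}(x)| \lesssim (t|hu|)^{r-p} \leq |hu|^{r-p}$, and hence $|R(x,hu)| \lesssim |hu|^{r}$ uniformly in $x\in\R$.

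Now I would invoke the vanishing-moment conditions from Condition (iv): $\int_{\R} u^{\ell} W(u)\,du = 0$ for $\ell = 1,\dots,p$, which kill all but the $\ell=0$ term of the Taylor polynomial. Since the $\ell=0$ term cancels against $\rho_{\sharp}(x)$ already subtracted, we obtain
\[
(\rho_{\sharp}*(h^{-1}W(\cdot/h)))(x) - \rho_{\sharp}(x) = \int_{\R} W(u) R(x,hu)\, du,
\]
whose absolute value is bounded by $C h^{r}\int_{\R} |u|^{r}|W(u)|\,du$ for some constant $C$ independent of $x$ and $n$. Finally, since $r\le p+1$, we have $|u|^{r} \leq 1 + |u|^{p+1}$, so Condition (iv) gives $\int_{\R}|u|^{r}|W(u)|\,du \leq \int_{\R}(1+|u|^{p+1})|W(u)|\,du < \infty$. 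Taking the supremum over $x \in \R$ yields $\|\rho_{\sharp}*(h^{-1}W(\cdot/h)) - \rho_{\sharp}\|_{\R} \lesssim h^{r}$, as desired.

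There is essentially no hard step here; the only subtlety is verifying that the integral-form Taylor remainder can legitimately be taken in the form that converts the leading $(-hu)^{p}\rho_{\sharp}^{(p)}(x)/p!$ term into the H\"older-controlled difference, which is classical and uses only the assumed $p$-fold differentiability of $\rho_{\sharp}$. The other bookkeeping points (finiteness of $\int|u|^{r}|W|\,du$ via $r\le p+1$, and boundedness of the constants uniformly in $x$) are immediate from the stated hypotheses.
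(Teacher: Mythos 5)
Your argument is correct and follows essentially the same route as the paper's proof: change variables, use the order-$p$ Taylor expansion of $\rho_{\sharp}$ around $x$, annihilate the $\ell=1,\dots,p$ terms via the vanishing moments of $W$, and control the remainder by the $(r-p)$-H\"older continuity of $\rho_{\sharp}^{(p)}$, finishing with $\int_{\R}|u|^{r}|W(u)|du\lesssim\int_{\R}(1+|u|^{p+1})|W(u)|du<\infty$. The only cosmetic difference is that you use the integral-form remainder while the paper uses the Lagrange (mean-value) form $\rho_{\sharp}^{(p)}(x-\theta yh)(-yh)^{p}/p!$ with $\theta\in[0,1]$, after which both apply the identical H\"older bound $|\rho_{\sharp}^{(p)}(x-\cdot)-\rho_{\sharp}^{(p)}(x)|\leq H|\cdot|^{r-p}$; the resulting pointwise estimates coincide.
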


\begin{proof}
Observe that by a change of variables, $[\rho_{\sharp}*(h^{-1}W(\cdot/h))] (x) - \rho_{\sharp}(x) = \int_{\R} \{ \rho_{\sharp}(x-yh) - \rho_{\sharp}(x) \} W(y) dy$.
If $p \geq 1$, then
by Taylor's theorem, for any $x,y \in \R$, 
\[
\rho_{\sharp}(x-yh) - \rho_{\sharp}(x) 
=
 \sum_{\ell=1}^{p-1} \frac{\rho_{\sharp}^{(\ell)}(x)}{\ell !} (-yh)^{\ell} + \frac{\rho_{\sharp}^{(p)}(x-\theta yh)}{p!} (-yh)^{p}
\]
for some $\theta \in [0,1]$, where $\sum_{\ell=1}^{0} = 0$ by convention.
Since $\rho_{\sharp}^{(p)}$ is $(r-p)$-H\"{o}lder continuous, we have that $H:= \sup_{x,y \in \R, x \neq y} \frac{|\rho_{\sharp}^{(p)} (x) - \rho_{\sharp}^{(p)}(y)|}{|x-y|^{r-p}} < \infty$.
Now, since $\int_{\R} y^{\ell} W(y) dy = 0$ for $\ell=1,\dots,p$, we have that for any $x \in \R$, 
\begin{align*}
&\left | \int_{\R} \{ \rho_{\sharp}(x-yh) - \rho_{\sharp}(x)  \} W(y) dy \right | =\left | \int_{\R} \left [ \{ \rho_{\sharp}(x-yh) - \rho_{\sharp}(x)  \} - \sum_{\ell=1}^{p} \frac{\rho_{\sharp}^{(\ell)}(x)}{\ell !} (-yh)^{\ell} \right ] W(y) dy \right | \\
&\quad \leq \frac{Hh^{r}}{p!} \int_{\R} |y|^{r} |W(y)| dy,
\end{align*}
where $0!=1$ by convention. 
This completes the proof. 
\end{proof}

\subsection{Proof of Theorem \ref{thm: Gaussian approximation}}

Observe that 
\begin{align}
x^{2}  (\hat{\rho}(x) -  \rho(x))&={1 \over 2\pi}\int_{\R}e^{-iux}\left ( -\psi''(u) - \sigma^{2} \right ) \varphi_{W}(uh)du - x^{2}\rho(x) \notag \\
&\quad + {1 \over 2\pi}\int_{\R}e^{-iux} (\psi(u) - \hat{\psi}(u))''\varphi_{W}(uh)du  + (\sigma^{2} - \hat{\sigma}^{2}){1 \over 2\pi}\int_{\R}e^{-iux}\varphi_{W}(uh)du \notag \\
&= \underbrace{{1 \over 2\pi}\int_{\R}e^{-iux}\left(\int_{\R}e^{iuy}y^{2}\rho(y)dy\right)\varphi_{W}(uh)du}_{=[(y^{2}\rho) * (h^{-1}W(\cdot/h))](x)}  -x^{2}\rho(x) \notag \\ 
&\quad + {1 \over 2\pi}\int_{\R}e^{-iux} (\psi(u) - \hat{\psi}(u))'' \varphi_{W}(uh)du \notag \\
&\quad  + (\sigma^{2} - \hat{\sigma}^{2})h^{-1}W(x/h) \notag \\
&=:I_{1,n} + II_{2,n} + III_{3,n}. \label{eq: decomposition}
\end{align}
For the first and third terms, we have that $\| I_{1,n} \|_{I} \lesssim h^{r} \ll  (n\Delta h\log n)^{-1/2}$ (by Lemma \ref{lem: bias}) and $\| III_{3,n} \|_{I} \ll (n\Delta h \log n)^{-1/2}$ under our assumption. For the second term $II_{2,n}$, Lemma \ref{lem: linearization} yields that
\begin{align*}
II_{2,n} &= \frac{-1}{2\pi \Delta} \int_{\R} e^{-iux} \frac{(\hat{\varphi}_{\Delta} - \varphi_{\Delta})''}{\varphi_{\Delta}} (u) \varphi_{W}(uh) du +o_{\Pr} \{  (n\Delta h \log n)^{-1/2} \}
\end{align*}
uniformly in $x \in I$, and observe that the first term on the right hand side can be expressed as 
\[
\frac{1}{n \Delta h} \sum_{j=1}^{n} \{ Y_{n,j}^{2}K_{n}((x-Y_{n,j})/h) - \Ep[Y_{n,1}^{2}K_{n}((x-Y_{n,1})/h) ] \}.
\]
Therefore, since $\inf_{x \in I} s_{n}(x) \gtrsim \sqrt{\Delta h}$, we have that 
\begin{equation}
\frac{\sqrt{n} \Delta h x^{2}(\hat{\rho}(x) - \rho(x))}{s_{n}(x)} = Z_{n}(x) + o_{\Pr} \{ (\log n)^{-1/2} \}  \label{eq: linearization}
\end{equation}
uniformly in $x \in I$. 

Now, we approximate $\| Z_{n} \|_{I}$ by the supremum in absolute value of a tight Gaussian random variable in $\ell^{\infty}(I)$ with mean zero and the same covariance function as $Z_{n}$. To this end, we shall employ Theorem 2.1 in \cite{ChChKa16}. Consider the empirical process
\[
\mathbb{G}_{n}(f) = \frac{1}{\sqrt{n}} \sum_{j=1}^{n}\{ f(Y_{n,j}) - \Ep[f(Y_{n,1})] \}, \ f \in \mF_{n}.
\]
The covering number bound (\ref{eq: VC type})  ensures the existence of a tight Gaussian random variable $\mathbb{U}_{n}$ in $\ell^{\infty} (\mF_{n})$ with mean zero and the same covariance function as $\{ \mathbb{G}_{n}(f) : f \in \mF_{n} \}$. Extend $\mathbb{G}_{n}$ linearly  to $\mF_{n}\cup (-\mF_{n}) = \{ f,-f : f \in \mF_{n} \}$, and observe that $\| \mathbb{G}_{n} \|_{\mF_{n}} = \sup_{f \in \mF_{n} \cup (-\mF_{n})} \mathbb{G}_{n}(f)$.
Note that from Theorem 3.7.28 in \cite{GiNi16}, $\mathbb{U}_{n}$ extends to the linear hull of $\mF_{n}$ in such a way that $\mathbb{U}_{n}$ has linear sample paths, so that $\| \mathbb{U}_{n} \|_{\mF_{n}} = \sup_{f \in \mF_{n} \cup (-\mF_{n})} \mathbb{U}_{n}(f)$, and in addition  $\mathbb{U}_{n}$  has uniformly continuous paths on the symmetric convex hull of $\mF_{n}$. It is not difficult to verify that the covering number of $\mF_{n} \cup (-\mF_{n})$ is at most twice that of $\mF_{n}$. In particular, $\{ \mathbb{U}_{n}(f) : f \in \mF_{n} \cup (-\mF_{n}) \}$ is a tight Gaussian random variable in $\ell^{\infty}(\mF_{n} \cup (-\mF_{n}))$ with mean zero and the same covariance function as $\{ \mathbb{G}_{n}(f) : f \in \mF_{n} \cup (-\mF_{n}) \}$. 

Next, since $Y_{n,1} - b\Delta$ has distribution $P_{\Delta,b}$ such that $y^{4}P_{\Delta,b}(dy) = g_{\Delta,b}(y)dy$ and $\| g_{\Delta,b} \|_{\R} \lesssim \Delta$,  we have that
\begin{align}
&\Ep[ Y_{n,1}^{4} K_{n}^{2}((x-Y_{n,1})/h) ] \leq 8\Ep[ (Y_{n,1}-b\Delta)^{4}  K_{n}^{2}((x-(Y_{n,1}-b\Delta)-b\Delta)/h) ] + 8b^{4} \Delta^{4}\| K_{n} \|_{\R}^{2} \notag \\
&\quad =8\int_{\R} K_{n}^{2}((x-b\Delta - y)/h)g_{\Delta,b}(y) dy + 8b^{4} \Delta^{4}\| K_{n} \|_{\R}^{2} \notag \\
&\quad= 8h \int_{\R} K_{n}^{2}(y) g_{\Delta,b}(x-b\Delta - yh) dy + 8b^{4} \Delta^{4}\| K_{n} \|_{\R}^{2} \notag \\
&\quad \leq  8h\| g_{\Delta,b} \|_{\R} \int_{\R} K_{n}^{2}(y) dy + 8b^{4} \Delta^{4}\| K_{n} \|_{\R}^{2}  \lesssim \Delta h \int_{\R} \left | \frac{\varphi_{W}(u)}{\varphi_{\Delta}(u/h)} \right |^{2} du + \Delta^{4}  \lesssim \Delta h, \label{eq: variance upper bound}
\end{align} 
so that $\sup_{f \in \mF_{n}} \Ep[f^{2}(Y_{n,1})] \lesssim 1$. 
On the other hand, since $\sup_{f \in \mF_{n}} \| f \|_{\R}  \lesssim 1/\sqrt{\Delta h}$, 
\begin{align*}
&\sup_{f \in \mF_{n}} \Ep[|f(Y_{n,1})|^{3}] \lesssim \sup_{f \in \mF_{n}} \Ep[f^{2}(Y_{n,1})]/\sqrt{\Delta h} \lesssim 1/\sqrt{\Delta h} \quad \text{and}\\
&\sup_{f \in \mF_{n}} \Ep[f^{4}(Y_{n,1})] \lesssim \sup_{f \in \mF_{n}} \Ep[f^{2}(Y_{n,1})]/(\Delta h) \lesssim 1/(\Delta h). 
\end{align*}
Therefore, applying Theorem 2.1 in \cite{ChChKa16} to $\mF_{n} \cup (-\mF_{n})$ with $B(f) \equiv 0, A \lesssim 1, v \lesssim 1, \sigma \sim 1, b \lesssim 1/\sqrt{\Delta h}, \gamma \lesssim 1/\log n$, and sufficiently large $q$ (in the notation used in the cited theorem), yields that there exists a random variable $V_{n}$ with $V_{n} \stackrel{d}{=} \| \mathbb{U}_{n} \|_{\mF_{n}}$ such that 
\begin{equation}
\left | \| \mathbb{G}_{n} \|_{\mF_{n}} - V_{n}\right | = O_{\Pr} \left \{  \frac{(\log n)^{1+1/q}}{n^{1/2-1/q} \sqrt{\Delta h}} + \frac{\log n}{(n \Delta h)^{1/6}} \right \}  = o_{\Pr} \{ (\log n)^{-1/2} \}.  
\label{eq: coupling}
\end{equation}

Now, for $f_{n,x}(y)=y^{2}K_{n}((x-y)/h)/s_{n}(x)$, define 
\[
Z_{n}^{G} (x) = \mathbb{U}_{n}(f_{n,x}), \ x \in I,
\]
and observe that $Z_{n}^{G}$ is a tight Gaussian random variable in $\ell^{\infty}(I)$ with mean zero and the same covariance function as $Z_{n}$. We will derive the conclusion of the theorem from (\ref{eq: coupling}). To this end, the following anti-concentration inequality will play a crucial role: for any $\varepsilon> 0$, 
\begin{equation}
\sup_{z \in \R} \Pr \{ | \| Z_{n}^{G} \|_{I} - z | \leq \varepsilon \} \leq 4 \varepsilon (1+\Ep[ \| Z_{n}^{G} \|_{I}]).
\label{eq: AC}
\end{equation}
See Corollary 2.1 in \cite{ChChKa14b}; see also Theorem 3 in \cite{ChChKa15}. 
From the result (\ref{eq: coupling}), there exits a sequence of constants $\varepsilon_{n} \downarrow 0$ such that $\Pr \{ | \| \mathbb{G}_{n} \|_{\mF_{n}} - V_{n} | > \varepsilon_{n} (\log n)^{-1/2} \} \leq \varepsilon_{n}$. 
Since $\| \mathbb{G}_{n} \|_{\mF_{n}} = \| Z_{n} \|_{I}$ and $V_{n} \stackrel{d}{=} \| Z_{n}^{G} \|_{I}$, we have that 
\begin{align*}
\Pr \{ \| Z_{n} \|_{I} \leq z \} &\leq \Pr \{ \| Z_{n}^{G} \|_{I} \leq z + \varepsilon_{n} (\log n)^{-1/2} \} + \varepsilon_{n} \\
&\leq \Pr \{ \| Z_{n} ^{G} \|_{I} \leq z\} + 4\varepsilon_{n}  (\log n)^{-1/2} (1+\Ep[ \| Z_{n}^{G} \|_{I}]) + \varepsilon_{n}
\end{align*}
for any $z \in \R$. Likewise, we have 
\[
\Pr \{ \| Z_{n} \|_{I} \leq z \} \geq \Pr \{ \| Z_{n} ^{G} \|_{I} \leq z\} - 4\varepsilon_{n}  (\log n)^{-1/2} (1+\Ep[ \| Z_{n}^{G} \|_{I}]) - \varepsilon_{n}
\]
for any $z \in \R$. Now, from the covering number bound (\ref{eq: VC type}) together with the fact that $\Var (f_{n,x}(Y_{n,1}))=1$ for all $x \in I$, Dudley's entropy integral bound \citep[cf.][Corollary 2.2.8]{vaWe96} yields that
\[
\Ep[ \| Z_{n}^{G} \|_{I}] = \Ep[\| \mathbb{U}_{n} \|_{\mF_{n}}] \lesssim \int_{0}^{1} \sqrt{1+\log(1/(\varepsilon \sqrt{\Delta h}))} d\varepsilon \lesssim \sqrt{\log n}.
\]
This completes the proof. 

\qed

\subsection{Proof of Theorem \ref{thm: bootstrap}}
We first prove the following technical lemma.

\begin{lemma}
\label{lem: scaling estimate}
$\| \hat{s}_{n}^{2}/s_{n}^{2} -1 \|_{I} = o_{\Pr}\{ (\log n)^{-1} \}$.
\end{lemma}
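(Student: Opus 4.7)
By Lemma \ref{lem: variance lower bound}, $\inf_{x \in I} s_n^2(x) \gtrsim \Delta h$, so it suffices to show $\|\hat{s}_n^2 - s_n^2\|_I = o_\Pr(\Delta h/\log n)$. Set $A_n(x) := \Ep[Y_{n,1}^4 K_n^2((x-Y_{n,1})/h)]$ and $B_n(x) := \Ep[Y_{n,1}^2 K_n((x-Y_{n,1})/h)]$, and let $\tilde{A}_n, \tilde{B}_n$ (resp.~$\hat{A}_n, \hat{B}_n$) denote the empirical averages obtained by replacing $\Ep$ by $n^{-1}\sum_{j=1}^n$ while keeping the deterministic kernel $K_n$ (resp.~replacing also $K_n$ by $\hat K_n$). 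Then $s_n^2 = A_n - B_n^2$, $\hat s_n^2 = \hat A_n - \hat B_n^2$, and
\[
\hat s_n^2 - s_n^2 = (\hat A_n - A_n) - (\hat B_n - B_n)(\hat B_n + B_n),
\]
so it suffices to bound separately each of $\|\tilde A_n - A_n\|_I$, $\|\hat A_n - \tilde A_n\|_I$, $\|\tilde B_n - B_n\|_I$ and $\|\hat B_n - \tilde B_n\|_I$, together with the crude estimate $\|\hat B_n + B_n\|_I = O_\Pr(1)$.

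For the empirical-process parts $\|\tilde A_n - A_n\|_I$ and $\|\tilde B_n - B_n\|_I$, the key observation is that both integrands $y \mapsto y^4 K_n^2((x-y)/h)$ and $y \mapsto y^2 K_n((x-y)/h)$ are bounded uniformly in $n$ and $x \in I$: indeed $y^2 \leq 2x^2 + 2h^2((x-y)/h)^2$ and Lemma \ref{lem: deconvolution kernel} give $\|(1+y^2)(|K_n| \vee K_n^2)\|_\R \lesssim 1$. Combined with (\ref{eq: variance upper bound}), this yields the variance bound $\Var(Y_{n,1}^4 K_n^2((x-Y_{n,1})/h)) \vee s_n^2(x) \lesssim \Delta h$. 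Mimicking the proof of Lemma \ref{lem: VC type} (expanding $y^4 = \sum_{\ell=0}^{4} \binom{4}{\ell} x^{4-\ell}(-h)^\ell ((x-y)/h)^\ell$ and applying Lemma \ref{lem: BV} to each $u \mapsto h^\ell u^\ell K_n^\alpha(u)$ for $\alpha \in \{1,2\}$ and $0 \leq \ell \leq 4$, whose total variations are uniformly bounded in $n$ by Lemma \ref{lem: deconvolution kernel}) shows that the relevant function classes are VC type with constant envelope. Talagrand's maximal inequality then yields
\[
\|\tilde A_n - A_n\|_I \vee \|\tilde B_n - B_n\|_I = O_\Pr\bigl(\sqrt{\Delta h \log n/n} + \log n/n\bigr),
\]
which is $o_\Pr(\Delta h/\log n)$ whenever $n\Delta h \gg (\log n)^3$, a condition ensured by Condition (v).

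For the plug-in parts, the strategy is to first derive weighted uniform estimates of the form $\|(1+y^2)|\hat K_n - K_n|\|_\R = o_\Pr(\log n)^{-1}$ (with an appropriate rate) and the companion bound $\|(1+y^2)(|\hat K_n| + |K_n|)\|_\R = O_\Pr(1)$. These are obtained by writing
\[
\hat K_n(y) - K_n(y) = \frac{1}{2\pi}\int e^{-iuy} \varphi_W(u)\,\frac{\varphi_\Delta(u/h) - \hat\varphi_\Delta(u/h)}{\hat\varphi_\Delta(u/h) \varphi_\Delta(u/h)} \, du,
\]
integrating by parts twice in $u$ to gain the $y^2$ weight, and bounding all Leibniz-rule derivatives of the integrand via Lemmas \ref{lem: lower bound on chf}--\ref{lem: uniform convergence} (the hypothesis $h \gtrsim \Delta^{1/2}$ is crucial here, exactly as in the proof of Lemma \ref{lem: deconvolution kernel}). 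Combining with the identity
\[
y^4(\hat K_n^2 - K_n^2)((x-y)/h) = [y^2(\hat K_n + K_n)((x-y)/h)]\cdot[y^2(\hat K_n - K_n)((x-y)/h)]
\]
and $y^2 \leq 2x^2 + 2h^2((x-y)/h)^2$ then yields $\|\hat A_n - \tilde A_n\|_I = o_\Pr(\Delta h/\log n)$, and a similar (simpler) argument handles $\|\hat B_n - \tilde B_n\|_I$.

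The main obstacle is securing the plug-in rate sharp enough to beat $\Delta h/\log n$: the naive sup-norm estimate $\|\hat K_n - K_n\|_\R = O_\Pr(n^{-1/2}\log n)$ arising from Lemma \ref{lem: uniform convergence} combined with a crude envelope argument yields only $\|\hat A_n - \tilde A_n\|_I = O_\Pr(n^{-1/2}\log n)$, which may not suffice when $\Delta h$ is small relative to $(\log n)^2/\sqrt n$. One should therefore exploit the averaging structure by linearizing $\hat K_n$ around $K_n$ (analogously to Lemma \ref{lem: linearization}), reducing $\hat A_n - \tilde A_n$ to the sum of a $U$-statistic-type object of order two, whose variance carries an extra $1/n$ factor, and a quadratic remainder that is negligible under Condition (v).
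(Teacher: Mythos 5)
Your decomposition $\hat s_n^2 - s_n^2 = (\hat A_n - A_n) - (\hat B_n - B_n)(\hat B_n + B_n)$ and the treatment of the empirical-process pieces $\tilde A_n - A_n$, $\tilde B_n - B_n$ via VC-type classes and Talagrand-type maximal inequalities is in the spirit of the paper's proof and gives the right rate (the paper works with the $s_n$-normalized class $\mF_n$, applying Corollary 5.1 of \cite{ChChKa14a} to $\{f^2 : f\in\mF_n\}$ and Theorem 2.14.1 of \cite{vaWe96} to $\mF_n$, but this is cosmetic).

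The plug-in part $\|\hat A_n - \tilde A_n\|_I$, $\|\hat B_n - \tilde B_n\|_I$, however, has a genuine gap, and the route you propose to patch it would not work as stated. Bounding $|y^4(\hat K_n^2 - K_n^2)((x-y)/h)|$ pointwise by $\|(1+u^2)(|\hat K_n|+|K_n|)(u)\|_\R\cdot\|(1+u^2)|\hat K_n - K_n|(u)\|_\R = O_\Pr(1)\cdot o_\Pr((\log n)^{-1})$ gives $\|\hat A_n - \tilde A_n\|_I = o_\Pr((\log n)^{-1})$, but this is \emph{not} $o_\Pr(\Delta h/\log n)$ since $\Delta h\to 0$; so the claimed conclusion does not follow from the stated weighted estimates. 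Your last paragraph identifies this problem and gestures at a linearization/$U$-statistic fix, but that fix is only sketched, and more to the point it is unnecessary: the paper's observation is that one should \emph{not} absorb the $y^4$ weight into a sup-norm estimate of the kernel difference but instead keep it inside the empirical average. Bounding the kernel difference by its unweighted sup-norm,
\[
\|\hat A_n - \tilde A_n\|_I \le \Bigl(\frac{1}{n}\sum_{j=1}^n Y_{n,j}^4\Bigr)\|\hat K_n^2 - K_n^2\|_\R = O_\Pr(\Delta)\cdot O_\Pr(n^{-1/2}\log n) = O_\Pr(n^{-1/2}\Delta\log n),
\]
using Lemma \ref{lem: moment} for $n^{-1}\sum Y_{n,j}^4 = O_\Pr(\Delta)$ and Lemma \ref{lem: uniform convergence} for $\|\hat K_n - K_n\|_\R=O_\Pr(n^{-1/2}\log n)$ (similarly for $\hat B_n-\tilde B_n$ with $n^{-1}\sum Y_{n,j}^2=O_\Pr(\Delta)$). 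Then
\[
\frac{n^{-1/2}\Delta\log n}{\Delta h/\log n} = \frac{(\log n)^2}{\sqrt{n}\,h} \le \frac{(\log n)^2}{\sqrt{n\Delta h}} \ll n^{-\delta}(\log n)^2 \to 0,
\]
by Condition (v), so the plug-in terms are indeed $o_\Pr(\Delta h/\log n)$. In short: the extra $\Delta$ factor that you were trying to manufacture through integration by parts and a second-order $U$-statistic expansion is already available for free from the fourth-moment bound on the increments; once this is noticed the crude sup-norm estimate on $\hat K_n - K_n$ suffices, and no weighted estimate or linearization of $\hat K_n$ is needed.
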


\begin{proof}
From Lemma \ref{lem: uniform convergence}, we have that $\| \hat{\varphi}_{\Delta} - \varphi_{\Delta} \|_{[-h^{-1},h^{-1}]} = O_{\Pr}(n^{-1/2} \log n)$, so that it is not difficult to verify that $\| \hat{K}_{n} - K_{n} \|_{\R} = O_{\Pr}(n^{-1/2} \log n)$ and $\| \hat{K}_{n}^{2} - K_{n}^{2} \|_{\R} \leq \| \hat{K}_{n} +K_{n} \|_{\R} \| \hat{K}_{n} - K_{n} \|_{\R} = O_{\Pr}(n^{-1/2} \log n)$. 
Hence 
\begin{align}
&\left \| \frac{1}{n} \sum_{j=1}^{n} Y_{n,j}^{2} \{ \hat{K}_{n}((\cdot-Y_{n,j})/h) - K_{n}((\cdot-Y_{n,j})/h)\} \right \|_{I} \notag \\
&\qquad \leq \underbrace{\left ( \frac{1}{n} \sum_{j=1}^{n} Y_{n,j}^{2} \right )}_{=O_{\Pr}(\Delta)} \| \hat{K}_{n} - K_{n} \|_{\R} =O_{\Pr} (n^{-1/2} \Delta \log n), \label{eq: intermediate}
\end{align}
and likewise
\[
\left \| \frac{1}{n} \sum_{j=1}^{n} Y_{n,j}^{4} \{ \hat{K}^{2}_{n}((\cdot-Y_{n,j})/h) - K_{n}^{2}((\cdot-Y_{n,j})/h)\} \right \|_{I} = O_{\Pr} (n^{-1/2} \Delta \log n).
\]
Since $\| n^{-1}\sum_{j=1}^{n} Y_{n,j}^{2}K_{n}((x-Y_{n,j})/h) \|_{I} \lesssim n^{-1}\sum_{j=1}^{n} Y_{n,j}^{2}  = O_{\Pr}(\Delta)$, we have that 
\[
\hat{s}_{n}^{2}(x)=\underbrace{\frac{1}{n} \sum_{j=1}^{n} Y_{n,j}^{4}K_{n}^{2}((x-Y_{n,j})/h) - \left \{ \frac{1}{n} \sum_{j=1}^{n} Y_{n,j}^{2}K_{n}((x-Y_{n,j})/h) \right \}^{2}}_{=: \tilde{s}_{n}^{2}(x)}+ O_{\Pr}(n^{-1/2} \Delta \log n)
\]
uniformly in $x \in I$. Furthermore, since $\inf_{x \in I}s_{n}^{2}(x) \gtrsim \Delta h$ and 
\[
\frac{n^{-1/2}\Delta \log n}{\Delta h} = n^{-1/2} h^{-1} \log n \lesssim n^{-1/2} \Delta^{-1/2} \log n \ll (\log n)^{-1},
\]
it remains to prove that $\| \tilde{s}_{n}^{2}/s_{n}^{2} - 1 \|_{I} = o_{\Pr}\{ (\log n)^{-1} \}$. To this end, 
since $\| \Ep[ Y_{n,1}^{2}K_{n}((\cdot-Y_{n,1})/h) ]/s_{n}  \|_{I}\lesssim \Delta/\sqrt{\Delta h} = \sqrt{\Delta /h} \lesssim \Delta^{1/4}$, it suffices to prove that 
\begin{align}
&\left \| \frac{1}{n} \sum_{j=1}^{n} \{ f^{2}(Y_{n,j}) - \Ep[f^{2}(Y_{n,1})]\} \right \|_{\mF_{n}} = o_{\Pr} \{ (\log n)^{-1} \}, \ \text{and} \label{eq: variance bound 1} \\
&\left \| \frac{1}{n} \sum_{j=1}^{n}  \{ f(Y_{n,j}) - \Ep[f(Y_{n,1})]\} \right \|_{\mF_{n}} = o_{\Pr} \{ (\log n)^{-1/2} \}. \label{eq: variance bound 2}
\end{align}
To prove (\ref{eq: variance bound 1}), we make use of Corollary 5.1 in \cite{ChChKa14a}. 
Let $\mF_{n}^{2}=\{ f^{2} : f \in \mF_{n} \}$.
Observe that $\sup_{f \in \mF_{n}^{2}} \Ep[ f^{2}(Y_{n,1})] = \sup_{f \in \mF_{n}} \Ep[ f_{n}^{4}(Y_{n,1}) ] \lesssim 1/(\Delta h)$, and 
$\sup_{f \in \mF_{n}^{2}} \| f \|_{\R} = \sup_{f \in \mF_{n}} \| f^{2} \|_{\R} \lesssim 1/(\Delta h)$. 
From the covering number bound (\ref{eq: VC type}) together with Corollary A.1 in \cite{ChChKa14a}, there exist constants $A_{2},v_{2} > 0$ independent of $n$ such that 
\[
\sup_{Q} N(\mF_{n}^{2},\| \cdot \|_{Q,2}, \varepsilon/(\Delta h)) \leq (A_{2}/\varepsilon)^{v_{2}}, \ 0 < \forall \varepsilon \leq 1,
\]
where $\sup_{Q}$ is taken over all finitely discrete distributions on $\R$. 
Therefore, from Corollary 5.1 in \cite{ChChKa14a}, the expectation of the left hand side on (\ref{eq: variance bound 1}) is bounded by
\[
\lesssim \sqrt{\frac{\log n}{n \Delta h}} + \frac{\log n}{n\Delta h} \ll (\log n)^{-1}.
\]
For (\ref{eq: variance bound 2}), from the covering number bound (\ref{eq: VC type}), 
Theorem 2.14.1 in \cite{vaWe96} shows that the expectation of the left hand side on (\ref{eq: variance bound 2}) is bounded by 
\[
\lesssim (n \Delta h)^{-1/2} \ll (\log n)^{-1/2}.
\]
This completes the proof. 
\end{proof}

\begin{proof}[Proof of Theorem \ref{thm: bootstrap}]
We separately prove  the theorem for the multiplier and empirical bootstraps. 

\medskip

\textbf{Multiplier bootstrap case}: We first verify that 
\begin{align}
&\sum_{j=1}^{n} \xi_{j} \left \{ Y_{n,j}^{2}\hat{K}_{n}((x-Y_{n,j})/h) - n^{-1} {\textstyle \sum}_{j'=1}^{n} Y_{n,j'}^{2}\hat{K}_{n}((x-Y_{n,j'})/h) \right \} \notag \\
&\quad =\sum_{j=1}^{n} \xi_{j} \left \{ Y_{n,j}^{2}K_{n}((x-Y_{n,j})/h) - n^{-1} {\textstyle \sum}_{j'=1}^{n} Y_{n,j'}^{2}K_{n}((x-Y_{n,j'})/h) \right \} \notag \\
&\qquad + o_{\Pr}(\sqrt{n\Delta h/\log n})
\label{eq: estimated multiplier}
\end{align}
uniformly in $x \in I$. From (\ref{eq: intermediate}), 
\[
\left \| \left ( \sum_{j=1}^{n} \xi_{j} \right ) \left [ \frac{1}{n} \sum_{j=1}^{n} Y_{n,j}^{2} \{ \hat{K}_{n}((\cdot-Y_{n,j})/h)-K_{n}((\cdot-Y_{n,j})/h) \} \right ] \right \|_{I} = O_{\Pr}(\Delta \log n). 
\]
Next, observe that 
\begin{align*}
&\left \| \sum_{j=1}^{n} \xi_{j} Y_{n,j}^{2} \{ \hat{K}_{n}((\cdot-Y_{n,j})/h)-K_{n}((\cdot-Y_{n,j})/h) \}  \right \|_{I} \\
&\quad \lesssim \int_{-1}^{1} \left | \sum_{j=1}^{n} \xi_{j} Y_{n,j}^{2} e^{iuY_{n,j}/h} \right | \left | \frac{1}{\hat{\varphi}_{\Delta}(u/h)} - \frac{1}{\varphi_{\Delta}(u/h)} \right | du \\
&\quad \leq O_{\Pr} (n^{-1/2} \log n) \int_{-1}^{1} \left | \sum_{j=1}^{n} \xi_{j} Y_{n,j}^{2} e^{iuY_{n,j}/h} \right | du, \ \text{and} \\
&\Ep \left [  \left | \sum_{j=1}^{n} \xi_{j} Y_{n,j}^{2} e^{iuY_{n,j}/h} \right | \right ] \leq \Ep \left [ \sqrt{ \sum_{j=1}^{n} Y_{n,j}^{4}} \right ] \lesssim \sqrt{n \Delta}, 
\end{align*}
which yields that 
\[
\left \| \sum_{j=1}^{n} \xi_{j} Y_{n,j}^{2} \{ \hat{K}_{n}((\cdot-Y_{n,j})/h)-K_{n}((\cdot-Y_{n,j})/h) \}  \right \|_{I} = O_{\Pr} (\sqrt{\Delta} \log n).
\]
Therefore, we have proved (\ref{eq: estimated multiplier}). 

Now, since $\|1/\hat{s}_{n}\|_{I} =O_{\Pr}(1/\sqrt{\Delta h})$ by Lemma \ref{lem: scaling estimate}, we have that 
\begin{align}
&\hat{Z}_{n}^{MB} (x)= \frac{1}{\hat{s}_{n}(x)\sqrt{n}} \sum_{j=1}^{n} \xi_{j}\left \{ Y_{n,j}^{2}K_{n}((x-Y_{n,j})/h) - n^{-1} {\textstyle \sum}_{j'=1}^{n} Y_{n,j'}^{2}K_{n}((x-Y_{n,j'})/h) \right \} \notag \\
&\qquad \qquad \qquad + o_{\Pr} \{ (\log n)^{-1/2} \} \notag \\
&=[ 1 + o_{\Pr}\{ (\log n)^{-1} \} ] \underbrace{\frac{1}{s_{n}(x) \sqrt{n}} \sum_{j=1}^{n} \xi_{j}\left \{ Y_{n,j}^{2}K_{n}((x-Y_{n,j})/h) - n^{-1} {\textstyle \sum}_{j'=1}^{n} Y_{n,j'}^{2}K_{n}((x-Y_{n,j'})/h) \right \}}_{=:Z_{n}^{MB}(x)} \notag \\
&\qquad +  o_{\Pr}\{ (\log n)^{-1/2} \} \label{eq: expansion}
\end{align}
uniformly in $x \in I$. We wish to show that 
\begin{equation}
\sup_{z \in \R} \left | \Pr \{ \| Z_{n}^{MB} \|_{I} \leq z \mid \mathcal{D}_{n} \} - \Pr \{ \| Z_{n}^{G} \|_{I} \leq z \} \right | \stackrel{\Pr}{\to} 0.
\label{eq: MB convergence}
\end{equation}
To this end, we shall apply Theorem 2.2 in \cite{ChChKa16} to $\mF_{n} \cup (-\mF_{n})$. Let 
\[
\mathbb{G}_{n}^{\xi} (f)=\frac{1}{\sqrt{n}} \sum_{j=1}^{n} \xi_{j}\left \{  f(Y_{n,j}) - n^{-1} {\textstyle \sum}_{j'=1}^{n} f(Y_{n,j'})  \right \}, \ f \in \mF_{n}.
\]
Applying Theorem 2.2 in \cite{ChChKa16} to $\mF_{n} \cup (-\mF_{n})$ with $B(f) \equiv 0, A \lesssim 1, v \lesssim 1, \sigma \sim 1, b \lesssim 1/\sqrt{\Delta h}, \gamma \lesssim 1/\log n$, and sufficiently large $q$ (in the notation used in the cited theorem), yields that there exists a random variable $V_{n}^{\xi}$ whose conditional distribution given $\mathcal{D}_{n}$ is identical to the distribution of $\| \mathbb{U}_{n} \|_{\mF_{n}} (= \| Z_{n}^{G} \|_{I})$, i.e., $\Pr \{ V_{n}^{\xi} \leq z \mid \mathcal{D}_{n} \} = \Pr \{ \| Z_{n}^{G} \|_{I} \leq z \}$ for all $z \in \R$ almost surely, and such that 
\[
\left | \| \mathbb{G}_{n}^{\xi} \|_{\mF_{n}} - V_{n}^{\xi} \right | = O_{\Pr} \left \{ \frac{(\log n)^{2+1/q}}{n^{1/2-1/q} \sqrt{\Delta h}} + \frac{(\log n)^{7/4 + 1/q}}{(n\Delta h)^{1/4}}\right \} = o_{\Pr} \{ (\log n)^{-1/2} \},
\]
which implies that there exists a sequence of constants $\varepsilon_{n} \downarrow 0$ such that 
\[
\Pr \{ | \| \mathbb{G}_{n}^{\xi} \|_{\mF_{n}} - V_{n}^{\xi} | > \varepsilon_{n} (\log n)^{-1/2} \mid \mathcal{D}_{n} \} \stackrel{\Pr}{\to} 0.
\]
Since $\| \mathbb{G}_{n}^{\xi} \|_{\mF_{n}} = \| Z_{n}^{MB} \|_{I}$, we have that 
\begin{align*}
\Pr \{ \| Z_{n}^{MB} \|_{I} \leq z \mid \mathcal{D}_{n} \} &\leq \Pr \{ V_{n}^{\xi} \leq z + \varepsilon_{n} (\log n)^{-1/2} \mid \mathcal{D}_{n} \} + o_{\Pr}(1) \\
&= \Pr \{ \| Z_{n}^{G} \|_{I} \leq z + \varepsilon_{n} (\log n)^{-1/2} \} + o_{\Pr} (1)
\end{align*}
uniformly in $z \in \R$. From the anti-concentration inequality (\ref{eq: AC}) together with the bound $\Ep [ \| Z_{n}^{G} \|_{I}] \lesssim \sqrt{\log n}$, we conclude that 
\[
\Pr \{ \| Z_{n}^{MB} \|_{I} \leq z \mid \mathcal{D}_{n} \} \leq \Pr \{ \| Z_{n}^{G} \|_{I} \leq z \} + o_{\Pr}(1)
\]
uniformly in $z \in \R$. Likewise, we have $\Pr \{ \| Z_{n}^{MB} \|_{I} \leq z \mid \mathcal{D}_{n} \} \geq \Pr \{ \| Z_{n}^{G} \|_{I} \leq z \} - o_{\Pr}(1)$ uniformly in $z \in \R$. Hence, we have proved (\ref{eq: MB convergence}). 

From (\ref{eq: MB convergence}) together with the bound $\Ep [ \| Z_{n}^{G} \|_{I}] \lesssim \sqrt{\log n}$, we see that $\| Z_{n}^{MB} \|_{I} = O_{\Pr}(\sqrt{\log n})$. So, from (\ref{eq: expansion}), we have that 
\[
\hat{Z}_{n}^{MB}(x) = Z_{n}^{MB}(x) + o_{\Pr}\{ (\log n)^{-1/2} \}
\]
uniformly in $x \in I$. In view of the proof of (\ref{eq: MB convergence}), we conclude that 
\begin{equation}
\sup_{z \in \R} | \Pr \{ \| \hat{Z}_{n}^{MB} \|_{I} \leq z \mid \mathcal{D}_{n} \} - \Pr \{ \| Z_{n}^{G} \|_{I} \leq z \} | \stackrel{\Pr}{\to} 0. \label{eq: MB convergence 2}
\end{equation}

Now, we wish to show that $\Pr \{ \rho (x) \in \hat{\mathcal{C}}^{MB}_{1-\tau} (x) \ \forall x \in I \} \to 1-\tau$. We begin with noting that 
\[
\rho(x) \in \hat{\mathcal{C}}^{MB}_{1-\tau} \ \forall x \in I \Leftrightarrow \| \sqrt{n}\Delta hx^{2} (\hat{\rho} - \rho)/\hat{s}_{n} \|_{I} \leq \hat{c}_{n}^{MB}(1-\tau).
\]
Recall that from the conclusion of Theorem \ref{thm: Gaussian approximation} together with the bound $\Ep [\| Z_{n}^{G} \|_{I}] \lesssim \sqrt{\log n}$, we have that $\| Z_{n} \|_{I} = O_{\Pr}(\sqrt{\log n})$. 
Observe that 
\begin{align*}
\frac{\sqrt{n}\Delta h x^{2} (\hat{\rho} (x)- \rho(x))}{\hat{s}_{n}(x)} &= \frac{s_{n}(x)}{\hat{s}_{n}(x)} \cdot \frac{\sqrt{n}\Delta x^{2}(\hat{\rho}(x) - \rho(x))}{s_{n}(x)} \\
&= \frac{s_{n}(x)}{\hat{s}_{n}(x)}\left [ Z_{n}(x) + o_{\Pr} \{ (\log n)^{-1/2} \} \right] \quad (\text{from (\ref{eq: linearization})}) \\
&= \left [ 1+o_{\Pr} \{ (\log n)^{-1} \} \right ]\left [ Z_{n}(x) + o_{\Pr} \{(\log n)^{-1/2} \} \right ]\quad (\text{from Lemma \ref{lem: scaling estimate}}) \\
&=Z_{n}(x) + o_{\Pr}\{(\log n)^{-1/2} \} \quad (\text{from $\| Z_{n} \|_{I} = O_{\Pr}(\sqrt{\log n})$}) 
\end{align*}
uniformly in $x \in I$. Hence, using the conclusion of Theorem \ref{thm: Gaussian approximation} together with the anti-concentration inequality (\ref{eq: AC}), we have that 
\begin{equation}
\sup_{z \in \R}| \Pr \{ \| \sqrt{n}\Delta h x^{2} (\hat{\rho} - \rho)/\hat{s}_{n} \|_{I} \leq z \} - \Pr \{ \| Z_{n}^{G} \|_{I} \leq z \} | \to 0.
\label{eq: Gaussian approximation}
\end{equation}
From the result (\ref{eq: MB convergence 2}), using an argument similar to Step 3 in the proof of Theorem 2 in \cite{KaSa16}, we can find a sequence of constants $\varepsilon_{n}' \downarrow 0$ such that 
\begin{equation}
c_{n}^{G}(1-\tau - \varepsilon_{n}') \leq \hat{c}_{n}^{MB}(1-\tau) \leq c_{n}^{G}(1-\tau + \varepsilon_{n}')
\label{eq: bound on critical value}
\end{equation}
with probability approaching one. Therefore, 
\begin{align*}
&\Pr \{ \| \sqrt{n}\Delta h x^{2} (\hat{\rho} - \rho)/\hat{s}_{n} \|_{I} \leq \hat{c}_{n}^{MB}(1-\tau) \}  \\
& \leq \Pr \{  \| \sqrt{n}\Delta hx^{2} (\hat{\rho} - \rho)/\hat{s}_{n} \|_{I} \leq c_{n}^{G}(1-\tau + \varepsilon_{n}') \} + o(1) \quad (\text{from (\ref{eq: bound on critical value})}) \\
&= \underbrace{\Pr \{ \| Z_{n}^{G} \|_{I} \leq c_{n}(1-\tau + \varepsilon_{n}') \}}_{=1-\tau+\varepsilon_{n}'} + o(1) \quad (\text{from (\ref{eq: Gaussian approximation})}) \\
&=1-\tau+o(1).
\end{align*}
Note that from the anti-concentration inequality, the distribution function of $\| Z_{n}^{G} \|_{I}$ is continuous, so that the equality $\Pr \{ \| Z_{n}^{G} \|_{I} \leq c_{n}^{G}(1-\tau + \varepsilon_{n}') \} = 1-\tau + \varepsilon_{n}'$ holds. Likewise, we have $\Pr \{ \| \sqrt{n}\Delta hx^{2} (\hat{\rho} - \rho)/\hat{s}_{n} \|_{I} \leq \hat{c}_{n}^{MB}(1-\tau) \} \geq 1-\tau - o(1)$.

Finally, the Borell-Sudakov-Tsirelson inequality \citep[][Lemma A.2.2]{vaWe96} yields that 
\[
c_{n}^{G}(1-\tau+\varepsilon_{n}') \lesssim \Ep[\| Z_{n}^{G} \|_{I}] + \sqrt{\log (1/(\tau-\varepsilon_{n}'))} \lesssim \sqrt{\log n},
\]
which implies that $\hat{c}_{n}^{MB}(1-\tau) = O_{\Pr}(\sqrt{\log n})$ from (\ref{eq: bound on critical value}). Therefore, the supremum width of the band $\hat{\mathcal{C}}_{1-\tau}^{MB}$ is 
\[
2 \sup_{x \in I} \frac{\hat{s}_{n}(x)}{x^{2}\sqrt{n} \Delta h} \hat{c}_{n}^{MB}(1-\tau)  \lesssim \{ 1+o_{\Pr}(1) \} \frac{\sup_{x \in I} s_{n}(x)}{\sqrt{n}\Delta h} \hat{c}_{n}^{MB}(1-\tau) = O_{\Pr}\{ (n\Delta h)^{-1/2} \sqrt{\log n}\}, 
\]
where the bound $\sup_{x \in I} s_{n}(x) \lesssim \sqrt{\Delta h}$ follows from (\ref{eq: variance upper bound}). 
This completes the proof for the multiplier bootstrap case.

\medskip

\textbf{Empirical bootstrap case}: 
Note that the bootstrap process $\hat{Z}_{n}^{EB}$ can be expressed as 
\[
\hat{Z}_{n}^{EB}(x) = \frac{1}{\hat{s}_{n}(x) \sqrt{n}} \sum_{j=1}^{n} (M_{n,j} - 1) Y_{n,j}^{2} \hat{K}_{n}((x-Y_{n,j})/h), \ x \in I
\]
where each $M_{n,j}$ is the number of times that $Y_{n,j}$ is ``redrawn'' in the bootstrap sample, and the vector $(M_{n,1},\dots,M_{n,n})$ is multinomially distributed with parameters $n$ and (probabilities) $(1/n,\dots,1/n)$ independent of the data $\mathcal{D}_{n}$ \citep[cf.][Section 3.6]{vaWe96}. Given this expression, the proof for the empirical bootstrap case is almost identical to that for the multiplier bootstrap case, where we use Theorem 2.3 in \cite{ChChKa16} instead of their Theorem 2.2. We omit the detail for brevity. 
\end{proof}

\section{Additional proofs}

\subsection{Proof of Lemma \ref{lem: TRV}}
\label{sec: appendix c}

The L\'evy process $L$ has the following decomposition (L\'{e}vy-It\^{o} decomposition)
\begin{align*}
L_{t} =  \gamma t + \sigma B_{t} + \int_{0}^{t} \int_{\R} x1_{[-1,1]}(x) (\mu^{L} - \nu^{L})(ds, dx) + \int_{0}^{t} \int_{\R} x1_{[-1,1]^{c}}(x) \mu^{L}(ds, dx),
\end{align*}
where $B = (B_{t})_{t \geq 0}$ is a standard Brownian motion and $\mu^{L}$ is a Poisson random measure on $[0,\infty) \times \R$, independent of $B$, with intensity measure $\nu^{L}(dt, dx) = dt \nu(dx)$. Depending on the value of $\alpha \in (0,2)$, we set  
\begin{align*}
L^{(1)} &= L - L^{(2)},\  
L^{(2)}_{t} = 
\begin{cases}
 \int_{0}^{t} \int_{\R}x\mu^{L}(ds,dx) & \text{ if $\alpha \in (0,1)$} \\
 \int_{0}^{t} \int_{\R}x(\mu^{L}  - \nu^{L})(ds,dx) & \text{ if $\alpha \in [1,2)$}
\end{cases}
.
\end{align*}
Let $L^{(1)}_{n,j} = L^{(1)}_{j\Delta} - L^{(1)}_{(j-1)\Delta}$. Consider the following decomposition for $\hat{\sigma}^{2}_{TRV} - \sigma^2$: 
\begin{align*}
\hat{\sigma}^{2}_{TRV} - \sigma^2 &= \underbrace{{1 \over n\Delta}\sum_{j=1}^{n}(L^{(1)}_{n,j})^{2} - \sigma^2}_{=:A_{1,n}} -\underbrace{{1 \over n\Delta}\sum_{j=1}^{n}(L^{(1)}_{n,j})^{2}1_{\{|L^{(1)}_{n,j}| > \alpha_{0}\Delta^{\theta_{0}}\}}}_{=: A_{2,n}} \\
&\quad + \underbrace{\hat{\sigma}^{2}_{TRV} - {1 \over n\Delta}\sum_{j=1}^{n}(L^{(1)}_{n,j})^{2}1_{\{|L^{(1)}_{n,j}| \leq \alpha_{0}\Delta^{\theta_{0}}\}}}_{=:A_{3,n}}.
\end{align*}
We shall evaluate $A_{k,n}$ for $k=1,2,3$. Since $L^{(1)}_{n,j} \sim N(\Delta \gamma_{0}, \Delta \sigma^{2})$ with
\[
\gamma_{0} = 
\begin{cases}
\gamma - \int_{[-1,1]}x \nu(dx) & \text{if $\alpha \in (0,1)$} \\
\gamma + \int_{[-1,1]^{c}}x\nu(dx) & \text{if $\alpha \in [1,2)$} 
\end{cases}
,
\]
we have that
\begin{align*}
\Ep[(L^{(1)}_{n,j})^{2}] &= \Delta^2\gamma^{2}_{0} + \Delta \sigma^2,\ \mathrm{E}[(L^{(1)}_{n,j})^{4}] = \Delta^4\gamma^{4}_{0} + 6\Delta^3 \gamma^{2}_{0} \sigma^2 + 3\Delta^2 \sigma^4,
\end{align*}
which yields that
\[
A_{1,n} =  \underbrace{{1 \over n\Delta}\sum_{j=1}^{n}\left \{ (L^{(1)}_{n,j})^{2} - \mathrm{E}[(L^{(1)}_{n,j})^{2}]\right\}}_{=O_{\Pr}(n^{-1/2})}+ \underbrace{{1 \over n\Delta}\sum_{j=1}^{n}\mathrm{E}[(L^{(1)}_{n,j})^{2}] - \sigma^2 }_{=O(\Delta)}= O_{\mathrm{P}}(n^{-1/2} + \Delta). 
\]
Furthermore, for any $q >0$, we have that $\Ep[|L^{(1)}_{n,j}|^{q}] \lesssim \Delta^{q/2}$, which
yields that 
\[
\Ep \left [ (L^{(1)}_{n,j})^{2}1_{\{|L^{(1)}_{n,j}|>\alpha_{0}\Delta^{\theta_{0}}\}} \right ] \lesssim \Delta^{1 + q(1-2\theta_{0})/2}.
\]
Taking $q = 2/(1-2\theta_{0})$, we have that $\Ep[|A_{2,n}|] \lesssim \Delta$. Finally, applying Lemma 13.2.6 in \cite{JP(2012)} with $k = 1, F(x) = x^{2}, s'=2, m=s=p'=1$, and $\theta = 0$ in the notation used in the lemma, we conclude  that $\Ep[|A_{3,n}|] \lesssim \Delta^{(2-\alpha)\theta_{0}}$. These  estimates yield the desired result. \qed

\subsection{Proof of Proposition \ref{prop: condition (ii)}}

%
Since $L_{t} - bt$ is a also L\'{e}vy process with the same L\'{e}vy density $\rho$ as $L_{t}$, without loss of generality, we may assume $b = 0$, and we shall verify Condition (ii) for $P_{\Delta,b} = P_{\Delta}$. 
In this proof,
slightly abusing notation, we shall use the same symbol for a measure and its Lebesgue density if the latter exists. Let $\| \cdot \|_{L^{1}}$ denote the $L^{1}$-norm with respect to the Lebesgue measure. 
We first note that since $yP_{\Delta}$ is absolutely continuous, for any integer $\ell > 1$, $y^{\ell}P_{\Delta}$ is also absolutely continuous with density $(y^{\ell}P_{\Delta})(y) = y^{\ell-1}\left (  (y P_{\Delta})(y) \right )$ (this means that the density of the signed measure $y^{\ell} P_{\Delta}(dy)$ is given by the multiple of $y^{\ell-1}$ with the Lebesgue density of the signed measure $yP_{\Delta}(dy)$). 
Furthermore, boundedness of $x^{4}\rho$ ensures that $\int_{[-1,1]^{c}} x^{2}\rho(x) dx < \infty$, which in turn ensures that $P_{\Delta}$ has finite second moment. In what follows, we will freely use some basic results on convolutions and Fourier transforms of finite signed Borel measures on $\R$; cf. \citet[][Section 8.6]{Fo99}.

\medskip

\textbf{Verification of $\| y^{4} P_{\Delta} \|_{\R} \lesssim \Delta$}: The proof is similar to that of Proposition 14 in \cite{NiReSoTr16}. 
We first note that $\varphi_{\Delta} = \varphi_{\Delta/2}^{2}$ by infinite divisibility. Using this property, we have that $\varphi_{\Delta}'' = \{\Delta \psi'' + (\Delta \psi'')^{2}\}\varphi_{\Delta} = \Delta \psi'' \varphi_{\Delta} +4\{ (\Delta/2) \psi' \varphi_{\Delta/2}\}^{2} = \Delta \psi'' \varphi_{\Delta} + 4 \{( \varphi_{\Delta/2})'\}^{2}$, i.e., 
\[
\int_{\R} e^{iuy} y^{2} P_{\Delta}(dy) = \Delta \underbrace{\left (\sigma^{2} + \int_{\R} e^{iuy} y^{2} \nu(dy) \right)}_{=\int_{\R} e^{ity} (\sigma^{2} \delta_{0} + y^{2}\nu)(dy)} \int_{\R} e^{iuy} P_{\Delta}(dy) + 4 \left ( \int_{\R} e^{iuy} y P_{\Delta/2}(dy) \right )^{2} .
\]
Applying the Fourier inversion, we have that 
\begin{equation}
y^{2}P_{\Delta} = \Delta \nu_{\sigma}*P_{\Delta} + 4 (yP_{\Delta/2})*(yP_{\Delta/2}),
\label{eq: second moment}
\end{equation}
where $\nu_{\sigma} = \sigma^{2}\delta_{0}+y^{2}\nu$ and $*$ denotes the convolution. 
Using the rule $y(P*Q) = (yP)*Q + P*(yQ)$ for finite signed Borel measures $P,Q$ on $\R$ such that $\int_{\R}|y| |P|(dy) < \infty$ and $\int_{\R} |y| |Q| (dy) < \infty$, we have that 
\begin{align*}
y^{3}P_{\Delta} &= \Delta \left \{ (y^{3}\nu) * P_{\Delta} + \nu_{\sigma} * (yP_{\Delta}) \right \}  + 8 (yP_{\Delta/2})*(y^{2}P_{\Delta/2}), \ \text{and} \\
y^{4}P_{\Delta} &= \Delta \left \{ (y^{4}\nu)*P_{\Delta} + 2(y^{3}\nu)*(yP_{\Delta}) +  \nu_{\sigma}*(y^{2}P_{\Delta}) \right \} \\
&\quad + 8 (y^{3}P_{\Delta/2})*(yP_{\Delta/2}) + 8 (y^{2}P_{\Delta/2}) * (y^{2}P_{\Delta/2}).
\end{align*}
Since $\| y^{3} \nu \|_{\R} < \infty, P_{\Delta}(\R) = 1, \| yP_{\Delta} \|_{\R} \lesssim \log (1/\Delta)$ (by assumption), $\nu_{\sigma} (\R) < \infty$, and $\| y^{2}P_{\Delta} \|_{L^{1}} = \Ep[ L_{\Delta}^{2} ] \lesssim \Delta$, we have that 
\[
\| y^{3} P_{\Delta} \|_{\R} \lesssim \Delta \left \{ \| y^{3}\nu \|_{\R} P_{\Delta}(\R) + \| yP_{\Delta} \|_{\R} \nu_{\sigma}(\R) \right \} + \| yP_{\Delta} \|_{\R} \| y^{2}P_{\Delta} \|_{L^{1}} \lesssim \Delta \log (1/\Delta).
\]

Now, if  $\sigma > 0$, then we have that $\| y P_{\Delta} \|_{\R} \lesssim 1$ (cf. Remark \ref{rem: condition (ii)}), so that $\| y^{2} P_{\Delta} \|_{\R} \lesssim \| y P_{\Delta} \|_{\R} \vee \| y^{3} P_{\Delta} \|_{\R} \lesssim 1$. 
On the other hand, if $\sigma = 0$, then from (\ref{eq: second moment}), we have that 
\[
\| y^{2} P_{\Delta} \|_{\R} \lesssim \Delta \| y^{2} \nu \|_{\R} P_{\Delta}(\R) + \| y P_{\Delta/2} \|_{\R} \| y P_{\Delta/2} \|_{L^{1}} \lesssim \Delta + \Delta^{1/2} \log (1/\Delta) \lesssim 1,
\]
where we have used that $\| y P_{\Delta/2} \|_{L^{1}} = \Ep[|L_{\Delta/2}|] \leq \sqrt{\Ep[ L_{\Delta/2}^{2} ]} \lesssim \Delta^{1/2}$. In either case, we have that $\| y^{2} P_{\Delta} \|_{\R} \lesssim 1$.

For $y^{4} P_{\Delta}$, we have that
\begin{align*}
\| y^{4} P_{\Delta} \|_{\R} &\lesssim \Delta \left \{ \| y^{4} \nu \|_{\R} P_{\Delta}(\R) + \| y^{3}\nu \|_{\R} \| y P_{\Delta} \|_{L^{1}} + \| y^{2}P_{\Delta} \|_{\R}\nu_{\sigma}(\R) \right \} \\
&\quad + \| y^{3} P_{\Delta/2} \|_{\R} \| yP_{\Delta/2} \|_{L^{1}} + \| y^{2}P_{\Delta} \|_{\R} \| y^{2}P_{\Delta} \|_{L^{1}} \\
&\lesssim \Delta (1+\Delta^{1/2}) + \Delta^{3/2} \log (1/\Delta) + \Delta \lesssim \Delta.  
\end{align*}

\medskip

\textbf{Verification of $\inf_{y \in I^{\varepsilon_{0}}} (y^{4}P_{\Delta})(y) \gtrsim \Delta$ for some $\varepsilon_{0} > 0$}. We divide the proof into two steps.

\medskip

\textbf{Step 1}. We first consider the case where $\sigma=0, \| x \nu \|_{\R} < \infty$, and $\gamma_{c} - \int_{\R} x \nu(dx) = 0$ ($\| x \nu \|_{\R} < \infty$ ensures that $\int_{\R} |x| \nu (dx) < \infty$). In this case, the characteristic exponent  $\psi(u)$ is 
\[
\psi (u) = \int_{\R} (e^{iux}-1) \nu (dx).
\]
Observe that $\psi'(u) = i \int_{\R} e^{iux} x \nu (dx)$, and applying the Fourier inversion to $i^{-1}\varphi_{\Delta}' = \Delta i^{-1}\psi'\varphi_{\Delta}$, we have that $yP_{\Delta} = \Delta(y\nu) * P_{\Delta}$, so that $\| y P_{\Delta} \|_{\R} \leq  \Delta \| y \nu \|_{\R} P_{\Delta}(\R) \lesssim \Delta$. From (\ref{eq: second moment}), 
\[
y^{2}P_{\Delta} =\Delta (y^{2}\nu)*P_{\Delta} + 4 (yP_{\Delta/2})*(yP_{\Delta/2}),
\]
and $\| (yP_{\Delta/2})*(yP_{\Delta/2}) \|_{\R} \leq \| yP_{\Delta/2} \|_{\R} \| yP_{\Delta/2} \|_{L^{1}} \ll \Delta$. Since $P_{\Delta} ([-\varepsilon,\varepsilon]^{c}) = \Pr(|L_{\Delta}| > \varepsilon) \leq \varepsilon^{-2}\Ep[L_{\Delta}^{2}] \lesssim \Delta$ for any $\varepsilon > 0$ by Markov's inequality, we have that $P_{\Delta}([-\varepsilon,\varepsilon]) = 1-O(\Delta)$. 
So, 
\[
\inf_{y \in I^{\varepsilon_{1}/2}} ((y^{2}\nu)*P_{\Delta}) (y) \geq P_{\Delta}([-\varepsilon_{1}/2,\varepsilon_{1}/2]) \inf_{y \in I^{\varepsilon_{1}}}
 (y^{2}\nu)(y) \gtrsim 1,
\]
 which shows that $\inf_{y \in I^{\varepsilon_{1}/2}} (y^{2}P_{\Delta})(y) \gtrsim \Delta$.
This leads to the desired result with $\varepsilon_{0}=\varepsilon_{1}/2$. 

\medskip

\textbf{Step 2}. Next, we consider the general case. Decompose the L\'{e}vy measure $\nu$ into $\nu = \nu 1_{[-\varepsilon_{1}/4,\varepsilon/4]} + \nu 1_{[-\varepsilon_{1}/4,\varepsilon_{1}/4]^{c}} =: \nu_{1} + \nu_{2}$, and observe that $\nu_{2}$ is a finite, non-zero measure with $\| x \nu_{2} \|_{\R} < \infty$. Then the characteristic exponent $\psi (u)$ can be decomposed as 
\[
\psi (u) = \underbrace{-\frac{\sigma^{2}u^{2}}{2} + iu \gamma_{1} + \int_{\R} (e^{iux} - 1 - iux) \nu_{1}(dx)}_{=:\psi_{1}(u)}+ \underbrace{\int_{\R} (e^{iux} - 1) \nu_{2}(dx)}_{=:\psi_{2}(u)},
\]
where $\gamma_{1} := \gamma_{c} - \int_{\R} x \nu_{2}(dx)$. From this decomposition, we have 
\[
L \stackrel{d}{=} M + N = (M_{t}+N_{t})_{t \geq 0},
\]
where  $M = (M_{t})_{t \geq 0}$ is  a L\'{e}vy process with L\'{e}vy measure $\nu_{1}$, and  $N =(N_{t})_{t \geq 0}$ is a compound Poisson process with jump intensity $\nu_{2}(\R)$ and jump size distribution $\nu_{2}/\nu_{2}(\R)$ independent of $M$. 
Let $Q_{\Delta}, R_{\Delta}$ denote the distributions of $M_{\Delta},N_{\Delta}$, respectively, so that $P_{\Delta} = Q_{\Delta}*R_{\Delta}$. 
Since $R_{\Delta}$ is a compound Poisson distribution with absolutely continuous jump distribution, we obtain the decomposition $R_{\Delta} = R_{\Delta} (\{ 0 \}) \delta_{0} + R_{\Delta}^{ac}$ where $R_{\Delta}^{ac}$ is absolutely continuous, so that $y^{2}P_{\Delta} = R_{\Delta}(\{ 0 \}) y^{2}Q_{\Delta}+ y^{2}(R_{\Delta}^{ac}*Q_{\Delta})$. Since both $y^{2}P_{\Delta}$ and $R_{\Delta}^{ac}*Q_{\Delta}$ are absolutely continuous, so is $y^{2}Q_{\Delta}$, and 
\[
(y^{2}P_{\Delta})(y) \geq y^{2} (R_{\Delta}^{ac}*Q_{\Delta})(y).
\]
Now, since $y^{2}R_{\Delta} = y^{2}R_{\Delta}^{ac}$ and from the result of Step 1, we have that $\inf_{y \in I^{\varepsilon_{1}/4}} (y^{2}R_{\Delta}^{ac}) (y) \gtrsim \Delta$ and so $\inf_{y \in I^{\varepsilon_{1}/4}} R_{\Delta}^{ac}(y) \gtrsim \Delta$. Furthermore, since $Q_{\Delta}([-\varepsilon_{1}/8,\varepsilon_{1}/8]^{c}) = \Pr (|M_{\Delta}| > \varepsilon_{1}/8) \lesssim \Ep[ M_{\Delta}^{2} ]/\varepsilon_{1}^{2} \lesssim \Delta$ by Markov's inequality, we have that 
\[
\inf_{y \in I^{\varepsilon_{1}/8}} (R_{\Delta}^{ac}*Q_{\Delta})(y) \geq Q_{\Delta}([-\varepsilon_{1}/8,\varepsilon_{1}/8]) \inf_{y \in I^{\varepsilon_{1}/4}} R_{\Delta}^{ac}(y) \gtrsim \Delta,
\]
which implies that $\inf_{y \in I^{\varepsilon_{1}/8}} (y^{2}P_{\Delta})(y)  \gtrsim \Delta$. This leads to the desired result with $\varepsilon_{0}=\varepsilon_{1}/8$. \qed

\subsection{Proof of Lemma \ref{lem: infinite variation}}

The proof is a modification of that of \cite{NiReSoTr16}, Proposition 16 Case (iv). We will obey the notational convention used in the proof of Proposition \ref{prop: condition (ii)}. 
We first note that, under the assumption of the lemma, the L\'{e}vy measure is infinite, since for $\varepsilon \in (0,1)$, 
\[
\int_{|x| \leq 1} \rho(x) dx \geq \int_{|x| \leq \varepsilon} \frac{x^{2}}{\varepsilon^{2}} \rho(x) dx \geq \frac{c}{\varepsilon},
\]
and taking $\varepsilon \downarrow 0$ shows that the far left hand side is infinite. 
Then, Theorem 27.7 in \cite{Sa99} yields that $P_{\Delta}$ is absolutely continuous with respect to the Lebesgue measure, and by the Fourier inversion, we have that $\| y P_{\Delta} \|_{\R} \lesssim \| \varphi_{\Delta}' \|_{L^{1}}$. Observe that 
\[
|\varphi_{\Delta}'(u)| \leq \Delta\left \{  |\gamma_{c}| +\left | \int_{\R}(e^{iux} - 1) x \rho (x) dx \right | \right \} e^{-\Delta \int_{\R} (1-\cos (ux)) \rho(x) dx }. 
\]
Since $| e^{iux} - 1 | \leq |ux| \wedge 2$, we have that 
\[
\left | \int_{\R}(e^{iux} - 1) x \rho (x) dx \right | \leq |u| \int_{|x| \leq 1/|u|}  x^{2} \rho(x) dx + 2\int_{|x| > 1/|u|} |x| \rho (x) dx. 
\]
Furthermore, since there exists a small constant $c' > 0$ such that $1-\cos (x) \geq c' x^{2}$ for all $|x| \leq 1$, we have that 
\[
\int_{\R} (1-\cos (ux)) \rho(x) dx \geq c'  u^{2} \int_{|x| \leq 1/|u|}x^{2} \rho(x) dx.
\]
From these estimates, it is seen that $\| \varphi_{\Delta}'1_{[-1,1]} \|_{L^{1}} \lesssim \Delta$. On the other hand, since 
\[
c \leq |u| \int_{|x| \leq 1/|u|}x^{2} \rho(x) dx \leq C \quad \text{and} \quad \int_{|x| > 1/|u|} |x| \rho (x) dx \leq C (1+\log |u|)
\]
for $|u| > 1$, 
we have that 
\[
\| \varphi_{\Delta}' 1_{[-1,1]^{c}} \|_{L^{1}} \lesssim \Delta \int_{[-1,1]^{c}}  (1+\log |u|)  e^{-c'' \Delta |u|} du \lesssim \log (1/\Delta),
\]
where $c'' = c c'$. 
This completes the proof. 
\qed

\subsection{Proof of Lemma \ref{lem: NIG bias}}
Without loss of generality, we may assume $\alpha=1$. Observe that 
\[
e^{\beta x}\int_{0}^{\infty} e^{-t - \frac{x^{2}}{4t}} dt = \int_{0}^{\infty} e^{-t} e^{\beta x - \frac{x^{2}}{4t}} dt = \int_{0}^{\infty} e^{-t (1-\beta^{2})} e^{-\frac{1}{4t} (x-2\beta t)^{2}} dt. 
\]
Pick any $x,y \in \R$, and suppose that $(x-2\beta t)^{2} \leq (y-2\beta t)^{2}$. Then 
\begin{align*}
&\left | e^{-\frac{1}{4t} (x-2\beta t)^{2}} - e^{-\frac{1}{4t} (y-2\beta t)^{2}} \right | \leq 1 - e^{-\frac{1}{4t} \{ (y-2\beta t)^{2} - (x-2\beta t)^{2} \}}
\leq \left ( 1 - e^{-\frac{1}{4t} \{ (y-2\beta t)^{2} - (x-2\beta t)^{2} \}} \right )^{r} \\
&\quad \leq \frac{1}{4^{r}t^{r}}  \{ (y-2\beta t)^{2} - (x-2\beta t)^{2} \}^{r} 
= \frac{1}{4^{r}t^{r}} |y-x|^{r} \underbrace{|y+x-4\beta t|^{r}}_{\leq |y+x|^{r} + 4^{r} \beta^{r} t^{r}} \\
&\quad \leq \frac{1}{4^{r} t^{r}} |y-x|^{r}|y+x|^{r} + \beta^{r} |y-x|^{r}.  
\end{align*}
By symmetry, this inequality holds for any $x,y \in \R$. 

Now, by a change of variables, we have that $(\rho_{\sharp} * (h^{-1}W(\cdot/h)))(x)- \rho_{\sharp}(x) = \int_{\R} \{ \rho_{\sharp}(x-yh) - \rho_{\sharp}(x) \}W(y) dy$.
Observe that, since $t^{-r}$ is integrable around the origin, 
\begin{align*}
| \rho_{\sharp}(x-yh) - \rho_{\sharp}(x) | &\leq \frac{\delta}{\pi} \int_{0}^{\infty} e^{-t(1-\beta^{2})}\underbrace{\left ( \frac{1}{4^{r} t^{r}} |yh|^{r}|2x - yh|^{r} + \beta^{r} |yh|^{r} \right )}_{\leq 4^{-r}t^{-r} |yh|^{2r} + (2^{-r} t^{-r} |x|^{r} + \beta^{r})|yh|^{r}} dt \\
&\lesssim h^{2r} |y|^{2r} + (1+|x|^{r}) h^{r} |y|^{r}
\end{align*}
up to a constant that depends only on $\beta,\delta,r$. Therefore, we conclude that 
\[
\| \rho_{\sharp} * (h^{-1}W(\cdot/h)) - \rho_{\sharp} \|_{I} \lesssim h^{2r} \int_{\R} |y|^{2r} |W(y)| dy + h^{r} \int_{\R} |y|^{r}|W(y)| dy \lesssim h^{r}. 
\]
This completes the proof. \qed

\section{Convergence rates of $\hat{\rho}$ under weighted sup-norm on $\R$}
\label{sec: uniform convergence}

In this section, we study convergence rates of the spectral estimator $\hat{\rho}$ under the weighted sup-norm $\| f \|_{w,\infty} = \sup_{x \in \R} |x^{2}f(x)|$. For this purpose, we work with conditions similar those in Assumption \ref{as: assumption 1}. 

\begin{proposition}
Suppose that $\int_{\R} |x|^{2q} \rho (x) dx < \infty$ for some $q \geq 2$; there exists $b \in \R$ such that the measure $y^{4}P_{\Delta,b}(dy)$ has Lebesgue density $g_{\Delta,b}$ such that  $\| g_{\Delta,b} \|_{\R} \lesssim \Delta$; for some $r > 0$,  $x^{2} \rho$ is $p$-times differentiable, and $(x^{2}\rho)^{(p)}$ is $(r-p)$-H\"{o}lder continuous, where $p$ is the integer such that $p < r \leq p+1$; let $W: \R \to \R$ be an integrable function that satisfies (\ref{eq: kernel}). Furthermore, suppose that $h \gtrsim \Delta^{1/2}$ and $\log \Delta^{-1} \lesssim \log n$. Then 
\[
\| \hat{\rho} - \rho \|_{w,\infty} = O_{\Pr} \left ( \chi_{n} +h^{r} + h^{-1}| \hat{\sigma}^{2} - \sigma^{2} | \right ),
\]
where 
\[
\chi_{n} = n^{-1/2} h^{-1}  \log n  +  n^{-1}\Delta^{-1/2}h^{-1} (\log n)^{2} + (n\Delta h)^{-1/2} (\log n)^{1/2} + (n\Delta)^{-1+1/q} h^{-1} \log n. 
\]
If in addition $\Delta \lesssim (\log n)^{-2}$ and $(n \Delta)^{-1/2+1/q} h^{-1/2} (\log n)^{1/2} \lesssim 1$, then 
\[
\chi_{n} \lesssim (n\Delta h)^{-1/2} (\log n)^{1/2}.
\]
\end{proposition}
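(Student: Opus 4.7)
The plan is to run decomposition~(\ref{eq: decomposition}) uniformly over $x \in \R$ and control each of the three terms. Lemma~\ref{lem: bias} already delivers the bias contribution $\|I_{1,n}\|_{\R}\lesssim h^{r}$; and since $\|W\|_{\R}\le(2\pi)^{-1}\|\varphi_{W}\|_{L^{1}}<\infty$, the variance-correction term satisfies $\|III_{3,n}\|_{\R}\lesssim h^{-1}|\hat{\sigma}^{2}-\sigma^{2}|$. The whole analysis therefore reduces to the stochastic term $II_{2,n}$.

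Substituting $(\hat{\psi}-\psi)''=\Delta^{-1}(\hat{\varphi}_{\Delta}-\varphi_{\Delta})''/\varphi_{\Delta}+r_{n}$, the proof of Lemma~\ref{lem: linearization} (in particular the estimate just after~(\ref{eq: linearization})) yields, after the factor $\Delta^{-1}$,
\begin{equation*}
\|r_{n}\|_{[-h^{-1},h^{-1}]} = O_{\Pr}\bigl\{n^{-1/2}\log n + n^{-1}\Delta^{-1/2}(\log n)^{2}\bigr\}.
\end{equation*}
Since $\varphi_{W}(uh)$ is supported in $|u|\le h^{-1}$ with $\int_{|u|\le h^{-1}}|\varphi_{W}(uh)|du\lesssim h^{-1}$, the $r_{n}$-contribution to $II_{2,n}$ is $O_{\Pr}\{n^{-1/2}h^{-1}\log n+n^{-1}\Delta^{-1/2}h^{-1}(\log n)^{2}\}$ uniformly in $x\in\R$---the first two summands of $\chi_{n}$. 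The leading part of $II_{2,n}$ is
\begin{equation*}
T_{n}(x) = \tfrac{1}{n\Delta h}\sum_{j=1}^{n}\bigl\{Y_{n,j}^{2}K_{n}((x-Y_{n,j})/h)-\E[Y_{n,1}^{2}K_{n}((x-Y_{n,1})/h)]\bigr\},
\end{equation*}
so the two remaining summands of $\chi_{n}$ must come from $\|T_{n}\|_{\R}$.

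The main obstacle is that, unlike in Lemma~\ref{lem: VC type}, the class $\mF_{n}=\{y\mapsto y^{2}K_{n}((x-y)/h):x\in\R\}$ has the unbounded envelope $F(y)=C_{0}y^{2}$. My approach is a truncation argument: set $M_{n}=(n\Delta)^{1/(2q)}$ and split $f_{x}=f_{x}1_{\{|y|\le M_{n}\}}+f_{x}1_{\{|y|> M_{n}\}}$. For the truncated piece, the envelope is $\lesssim M_{n}^{2}$, the weak variance $\sup_{x\in\R}\E[f_{x}^{2}(Y_{n,1})]\lesssim\Delta h$ is inherited from~(\ref{eq: variance upper bound}) (whose proof uses only $\|g_{\Delta,b}\|_{\R}\lesssim\Delta$, hence is uniform in $x\in\R$), and a polynomial uniform covering number is obtained by expanding $y^{2}=x^{2}-2x(x-y)+(x-y)^{2}$ to exhibit $\mF_{n}$ as $x$-dependent linear combinations of translates/dilates of three fixed functions of bounded total variation (by Lemma~\ref{lem: deconvolution kernel}), to which Lemma~\ref{lem: BV} applies. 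Talagrand's inequality (e.g.\ Corollary~5.1 of~\cite{ChChKa14a}) then gives
\begin{equation*}
\sup_{x\in\R}\Bigl|\tfrac{1}{n}\sum_{j=1}^{n}\bigl(f_{x}(Y_{n,j})1_{\{|Y_{n,j}|\le M_{n}\}}-\E[\,\cdot\,]\bigr)\Bigr| = O_{\Pr}\bigl(\sqrt{\Delta h\log n/n}+M_{n}^{2}\log n/n\bigr),
\end{equation*}
while Markov's inequality combined with $\E[|Y_{n,1}|^{2q}]\lesssim\Delta$ (the $2q$-analog of Lemma~\ref{lem: moment}) bounds the tail piece by $O_{\Pr}(\Delta M_{n}^{2-2q})$. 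The choice $M_{n}=(n\Delta)^{1/(2q)}$ makes both $M_{n}^{2}/(n\Delta)$ and $M_{n}^{2-2q}$ equal to $(n\Delta)^{1/q-1}$; dividing through by $\Delta h$ produces exactly $(n\Delta h)^{-1/2}(\log n)^{1/2}$ and $(n\Delta)^{-1+1/q}h^{-1}\log n$.

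Collecting these four contributions proves the main bound. For the simplification, direct algebra under the added hypotheses suffices: $\Delta\lesssim(\log n)^{-2}$ with $h\gtrsim\Delta^{1/2}$ gives $\sqrt{\Delta/h}\lesssim(\log n)^{-1/2}$, whence $n^{-1/2}h^{-1}\log n\lesssim(n\Delta h)^{-1/2}(\log n)^{1/2}$; the hypothesis $(n\Delta)^{-1/2+1/q}h^{-1/2}(\log n)^{1/2}\lesssim 1$ rewrites as $(n\Delta)^{-1+1/q}h^{-1}\log n\lesssim(n\Delta h)^{-1/2}(\log n)^{1/2}$; and combining these with $h\gtrsim\Delta^{1/2}$ also dominates $n^{-1}\Delta^{-1/2}h^{-1}(\log n)^{2}$, leaving only $(n\Delta h)^{-1/2}(\log n)^{1/2}$.
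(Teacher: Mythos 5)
You take a genuinely different route for the key stochastic term $T_n$. The paper applies Corollary~5.1 of \cite{ChChKa14a} directly to the class $\breve{\mF}_n=\{y\mapsto y^2 K_n((x-y)/h):x\in\R\}$ with the \emph{unbounded} envelope $\breve F(y)=D_3 y^2$, feeding the moment condition $\E[|Y_{n,1}|^{2q}]\lesssim\Delta$ into that inequality through the estimate $\E[\max_{1\le j\le n}Y_{n,j}^4]\lesssim(n\Delta)^{2/q}$, which controls the second-moment-of-the-maximum term. You instead truncate at $M_n=(n\Delta)^{1/(2q)}$, apply the moment inequality to a bounded class, and dispatch the tail by Markov. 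Your choice of $M_n$ correctly balances the two pieces, the final rates agree, and the truncation is a more elementary, hand-made version of what the unbounded-envelope form of Talagrand's inequality already does internally; each approach buys essentially the same thing at about the same effort.

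There is, however, a gap in your covering-number step for the truncated class. You propose the expansion $y^2=x^2-2x(x-y)+(x-y)^2$ to exhibit $f_x$ as an $x$-dependent linear combination of translates/dilates of $K_n$, $zK_n(z)$, $z^2K_n(z)$. In Lemma~\ref{lem: VC type} this device works precisely because there the index $x$ ranges over the \emph{compact} set $I$, so the coefficients $x^2$ and $-2xh$ are uniformly bounded; here $x$ ranges over all of $\R$, those coefficients are unbounded, and the class of such mixtures does not inherit a polynomial covering number from Lemma~\ref{lem: BV}. The correct device (and the ``simple covering number calculation'' the paper invokes) is to treat $\phi(y)=y^2\,1_{\{|y|\le M_n\}}$ (respectively $\phi(y)=y^2$ in the untruncated version) as a \emph{fixed multiplier} of the BV class $\mathcal{H}=\{y\mapsto K_n(ay+b):a,b\in\R\}$: since for any finitely discrete $Q$ with $\int\phi^2\,dQ>0$ one has $\|\phi h_1-\phi h_2\|_{Q,2}=(\int\phi^2\,dQ)^{1/2}\|h_1-h_2\|_{\tilde Q,2}$ with $\tilde Q\propto\phi^2 Q$ again finitely discrete, the class $\phi\mathcal{H}$ inherits the same VC-type bound against the envelope $|\phi|\cdot\mathsf{TV}(K_n)$. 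With that replacement your argument goes through; the variance bound via $\|g_{\Delta,b}\|_\R\lesssim\Delta$ is indeed uniform over $x\in\R$, and the algebra for the final simplification is correct.
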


\begin{proof}
Recall from the decomposition (\ref{eq: decomposition}) that 
\begin{align*}
x^{2} (\hat{\rho}(x) - \rho(x)) &= [(y^{2}\rho) * (h^{-1}W(\cdot/h))] (x) - x^{2}\rho(x) \\
&\quad + \frac{1}{2\pi} \int_{\R} e^{-iux} (\psi''(u) - \hat{\psi}''(u)) \varphi_{W}(uh) du + (\sigma^{2} - \hat{\sigma}^{2}) h^{-1}W(x/h). 
\end{align*}
Lemma \ref{lem: bias} yields that $\|  [(y^{2}\rho) * (h^{-1}W(\cdot/h))] - x^{2}\rho \|_{\R} \lesssim h^{r}$. Furthermore, the expansion (\ref{eq: linearization}) holds under the assumption of the proposition, and therefore, 
\begin{align*}
\frac{1}{2\pi} \int_{\R} e^{-iux} (\psi''(u) - \hat{\psi}''(u)) \varphi_{W}(uh) du &= \underbrace{\frac{-1}{2\pi \Delta} \int_{\R} e^{-iux} \frac{(\hat{\varphi}_{\Delta} - \varphi_{\Delta})''}{\varphi_{\Delta}} (u) \varphi_{W}(uh) du}_{=\frac{1}{n \Delta h} \sum_{j=1}^{n} \{ Y_{n,j}^{2}K_{n}((x-Y_{n,j})/h) - \Ep[Y_{n,1}^{2}K_{n}((x-Y_{n,1})/h) ] \}} \\
&\quad +O_{\Pr} \{  n^{-1/2} h^{-1}  \log n  +  n^{-1}\Delta^{-1/2}h^{-1} (\log n)^{2} \}
\end{align*}
uniformly in $x \in \R$. It remains to prove that 
\begin{align}
&\left \| \frac{1}{n \Delta h} \sum_{j=1}^{n} \{ Y_{n,j}^{2}K_{n}((\cdot-Y_{n,j})/h) - \Ep[Y_{n,1}^{2}K_{n}((\cdot-Y_{n,1})/h) ] \} \right \|_{\R} \notag \\
&\quad = O_{\Pr} \{ (n\Delta h)^{-1/2} (\log n)^{1/2} + (n\Delta)^{-1+1/q} h^{-1} \log n \}. \label{eq: stochastic bound}
\end{align}
To this end, we shall apply Corollary 5.1 in \cite{ChChKa14a} to the function class
\[
\breve{\mF}_{n} = \{ y \mapsto y^{2} K_{n}((x-y)/h) : x \in \R \}.
\]
Under the present assumption, we still have that $\| K_{n} \|_{\R} \lesssim 1$, and choose a constant $D_{3} > 0$ independent of  $n$ such that $\| K_{n} \|_{\R} \leq D_{3}$. Let $\breve{F}(y) = D_{3} y^{2}$, which is an envelope function for $\breve{\mF}_{n}$. 
From the proof of Lemma  (\ref{lem: BV}), it is seen that $K_{n}$ is of bounded variation with $\mathsf{TV}(K_{n}) \lesssim 1$, so that by Lemma \ref{lem: BV} together with a simple covering number calculation, we have that for some constants $A_{3},v_{3} > 0$ independent of $n$, 
\[
\sup_{Q} N(\breve{\mF}_{n},\| \cdot \|_{Q,2},\varepsilon \| \breve{F} \|_{Q,2}) \leq (A_{3}/\varepsilon)^{v_{3}}, \ 0 < \forall \varepsilon \leq 1.
\]
Observe that $\sup_{x \in \R}\Ep[ Y_{n,1}^{4} K_{n}^{2}((x-Y_{n,1})/h) ] \lesssim \Delta h$ (cf. (\ref{eq: variance upper bound})), and 
\[
\Ep[ \max_{1 \leq j \leq n} Y_{n,j}^{4} ] \leq (\Ep[  \max_{1 \leq j \leq n} |Y_{n,j}|^{2q} ])^{2/q} \leq \left ( \sum_{j=1}^{n} \Ep[|Y_{n,j}|^{2q}] \right )^{2/q} \lesssim (n\Delta)^{2/q},
\]
where we have used that $\Ep[ |Y_{n,1}|^{2q} ] \lesssim \Delta$, which follows from applying Theorem 1.1 in \cite{Fi08} with $f(x) = |x|^{2q}$. 
Therefore, applying Corollary 5.1 in \cite{ChChKa14a} to $\breve{\mF}_{n}$, we conclude that 
\[
\Ep \left [ \left \| \frac{1}{n\Delta h} \sum_{j=1}^{n}\{ f(Y_{n,j}) - \Ep[ f(Y_{n,1}) ] \} \right \|_{\breve{\mF}_{n}} \right ] \lesssim \sqrt{\frac{\log n}{n \Delta h}} + \frac{\log n}{(n \Delta)^{1-1/q}h}, 
\]
which leads to (\ref{eq: stochastic bound}). 

The last assertion is trivial, and the proof is completed. 
\end{proof}

\end{document}